\pgfplotsset{compat=1.7}
\crefname{assumption}{Assumption}{Assumptions}
\Crefname{assumption}{Assumption}{Assumptions}
\crefname{figure}{Figure}{Figures}
\theoremstyle{plain}
\newtheorem{theorem}{Theorem}[section]
\newtheorem{lemma}[theorem]{Lemma}
\newtheorem{corollary}[theorem]{Corollary}
\theoremstyle{definition}
\newtheorem{definition}[theorem]{Definition}
\newtheorem{assumption}[theorem]{Assumption}
\theoremstyle{remark}
\newtheorem{remark}[theorem]{Remark}
\providecommand{\MSC}[1]{\textbf{Mathematics Subject Classification} #1}
\newdimen\fwd
\DeclareMathOperator{\proj}{\Pi}
\DeclareMathOperator*{\upd}{Update}
\newcommand{\norm}[1]{\ensuremath{\lVert #1\rVert}}
\newcommand{\Norm}[1]{\ensuremath{\left\lVert #1\right\rVert}}
\newcommand{\xopt}{\ensuremath{x^\ast}}
\newcommand{\Lo}{{\L}ojasiewicz}
\newcommand{\KL}{Kurdyka--\Lo}
\newcommand{\PetraM}{\textsc{S-LBFGS-M}}
\newcommand{\PetraP}{\textsc{S-LBFGS-P}}
\newcommand{\Ballop}[1]{\ensuremath{\mathbb{B}_{#1}}}
\newcommand{\Lin}{\ensuremath{\mathcal{L}}}
\newcommand{\LinPDX}{\ensuremath{\mathcal{L}_+(\CX)}}
\newcommand{\tl}{\ensuremath{\epsilon}}
\newcommand{\brk}{\ensuremath{break}}
\newcommand{\outp}{\ensuremath{output}}
\newcommand{\CD}{{\cal D}}
\newcommand{\CS}{{\cal S}}
\newcommand{\CJ}{{\cal J}}
\newcommand{\CJopt}{{\cal J^\ast}}
\newcommand{\CX}{{\cal X}}
\newcommand{\CN}{{\cal N}}
\newcommand{\R}{\mathbb{R}} 
\newcommand{\N}{\mathbb{N}}
\newcommand{\Hy}{\ensuremath{y}}
\newcommand{\Hs}{\ensuremath{s}}
\newcommand{\Hp}{\ensuremath{p}}
\newcommand{\Bp}{\ensuremath{s}}
\newcommand{\Bz}{\ensuremath{z}}
\newcommand{\Bu}{\ensuremath{u}}
\newcommand{\GM}{\ensuremath{g}}
\newcommand{\Adap}{{\mathrm{Adap}}}
\newcommand{\FAIRTEXT}{{\textsc{FAIR}}}
\newcommand{\LBFGS}{\mbox{L-BFGS}}
\newcommand{\eps}{\varepsilon}
\newcommand{\wbp}{w^s}
\newcommand{\wgm}{w^g}
\newcommand{\wbz}{w^z}
\title{A structured L-BFGS method and its application to inverse problems}
\author{Florian Mannel\thanks{Institute of Mathematics and Image Computing, University of Lübeck, 
		Lübeck, Germany (\href{mailto:florian.mannel@uni-luebeck.de}{florian.mannel@uni-luebeck.de}/\href{mailto:hariom85@gmail.com}{hariom85@gmail.com})}
	\,\href{https://orcid.org/0000-0001-9042-0428}{\includegraphics[height=.35cm]{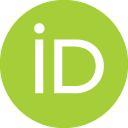}}
	\and Hari Om Aggrawal\footnotemark[1] \,\href{https://orcid.org/0000-0002-8892-5993}{\includegraphics[height=.35cm]{ORCIDiD128x128.png}} 
	\and Jan Modersitzki\thanks{Fraunhofer Institute for Digital Medicine MEVIS, Lübeck, Germany (\href{mailto:jan.modersitzki@mevis.fraunhofer.de}{jan.modersitzki@mevis.fraunhofer.de})}}
\date{Preprint, \today}
\begin{document}

\maketitle

\begin{abstract}
	Many inverse problems are phrased as optimization problems in which the objective function is the sum of a data-fidelity term and a regularization.
	Often, the Hessian of the fidelity term is computationally unavailable while the Hessian of the regularizer allows for cheap matrix-vector products. 
	In this paper, we study an \LBFGS~method that takes advantage of this structure. 
	We show that the method converges globally without convexity assumptions and that the convergence is linear under a \KL-type inequality. 
	In addition, we prove linear convergence to cluster points near which the objective function is strongly convex. 
	To the best of our knowledge, this is the first time that linear convergence of an \LBFGS~method is established in a non-convex setting. 
	The convergence analysis is carried out in infinite dimensional Hilbert space, which is appropriate for inverse problems but has not been done before.
	Numerical results show that the new method outperforms other structured \LBFGS~methods and classical \LBFGS~on non-convex real-life problems from medical image registration. It also compares favorably with classical \LBFGS~on ill-conditioned quadratic model problems. 
	An implementation of the method is freely available. 
\end{abstract}

\begin{keywords}
	Structured \LBFGS, non-convex optimization, \KL~condition, cautious updates, seed matrix, limited memory quasi-Newton methods, medical imaging, image registration, inverse problems, convergence analysis, Hilbert spaces
\end{keywords}

\MSC{65J22 $\cdotp$ 65K05 $\cdotp$ 65K10 $\cdotp$ 90C06 $\cdotp$ 90C26 $\cdotp$ 90C30 $\cdotp$ 90C48 $\cdotp$ 90C53 $\cdotp$ 90C90}



\section{Introduction}
	
	\subsection{Topic and main contributions of the paper}
	
	Many inverse problems are phrased as unconstrained optimization problems 
	\begin{equation*}
		\min_{x\in\CX}\,\CJ(x)
	\end{equation*}
	with a cost function of the form
	\begin{equation}\label{eq_SO}
		\CJ:\CX\to\R,\qquad \CJ(x)=\CD(x)+\CS(x).
	\end{equation}
	Here, $\CD:\CX\rightarrow\R$ represents a data-fitting term, $\CS:\CX\rightarrow\R$ a regularizer, and $\CX$ a Hilbert space. 
	Typically, after discretization the data-fitting term has an ill-conditioned and dense Hessian for which even matrix-vector multiplications are computationally expensive, while the Hessian of the regularizer is positive definite, well-conditioned and sparse with computationally cheap matrix-vector products.
	The \LBFGS~method \cite{N80,Liu1989,BNS94} is one of the most widely used algorithms for large-scale inverse problems, but in its plain form 
	does not take into account possible splittings of the cost function such as \cref{eq_SO}. 
	
	In this paper we present an \LBFGS~method for objectives of the form \cref{eq_SO}
	that exploits the different structural properties of the terms $\CD$ and~$\CS$.
	We provide a very general convergence analysis for this novel method and we demonstrate on 
	real-world problems from medical image registration that the method is extremely effective in practice.
	More specifically, we prove the global convergence of the method for convex and for non-convex $\CJ$ and we show its linear convergence under a \KL-type inequality and also under a \emph{local} convexity assumption; results of this quality are new for \LBFGS-type methods.
	In the numerical experiments we find that the method significantly outperforms its competitors from \cite{BDLP22} and also its unstructured counterpart.
	
	The main motivation for this paper arose from the need to improve the performance of \LBFGS~on
	image registration problems \cite{FAIR09}. These problems are large-scale, highly non-convex inverse problems in which the structure \cref{eq_SO} is ubiquitous.
	In this context, the results of this paper cover regularizers such as $L_2$-norm based Tikhonov regularizers \cite{FAIR09,Hansen2010}, smooth total-variation norm \cite{Vogel2002}, quadratic forms of derivative based regularization, where $\CS(x)=\|Bx\|^2_{L_2}$ with a linear differential operator $B$, and the non-quadratic and highly non-convex hyperelastic regularizer~\cite{BMR13}. However, we stress that the convergence analysis of this paper is very general and not restricted to inverse problems.
	
	\subsection{Structured seed matrices for structured problems}\label{sec_introtwoloop}
	
	In this section we detail how we incorporate the structure of the objective into \LBFGS~and we discuss some related issues. 
	To this end, let us briefly summarize the construction of the \LBFGS~operator for $\min_{x\in\CX}\CJ(x)$; for details see \cite[Section~7.2]{Nocedal2006}.
	Given the current iterate $x_k$ and stored update vectors $\{(s_{j},y_{j})\}_{j=k-\ell}^{k-1}$ with $y_j^T s_j>0$ for all $j$, 
	the \LBFGS~operator $B_k\approx\nabla^2\CJ(x_{k})$ is obtained as $B_k:=B_k^{(\ell)}$ from the recursion
	\begin{equation}\label{eq_defLBFGSupd1}
		B_k^{(j+1)}:=B_k^{(j)} + \upd\bigl(s_{m+j},y_{m+j},B_k^{(j)}\bigr), \qquad j=0,\ldots,\ell-1.
	\end{equation}
	Here, $m:=k-\ell$, and for $(s,y)\in\CX\times\CX$ with $y^T s>0$ and positive definite $B$ the update is 
	\begin{equation}\label{eq_defU}
		\upd\bigl(s,y,B\bigr):=\frac{y y^T}{y^T s} - \frac{B s s^T B^T}{s^T B s}.
	\end{equation}
	A key observation is that the \emph{seed matrix $B_k^{(0)}$} needs to be specified in \cref{eq_defLBFGSupd1}. 
	The most widely used choice is $B_k^{(0)}=\tau_k I$ with a positive $\tau_k>0$, but it is well known 
	that this choice can lead to slow convergence on ill-conditioned problems. 
	It is natural to expect that if we make $B_k^{(0)}$ a better approximation of $\nabla^2\CJ(x_k)$, 
	the convergence will improve, particularly on ill-conditioned problems.
	To incorporate the structure given in \cref{eq_SO} we therefore propose to use a \emph{structured seed matrix} of the form
	\begin{equation}\label{eq_choiceseedmatrix}
		B_k^{(0)}=\tau_kI+S_k 
	\end{equation}
	where $\tau_k\geq 0$ and $S_k$ are selected such that $B_k^{(0)}$ is positive definite. 
	The main choice that we have in mind is $S_k=\nabla^2\CS(x_k)$, but other choices are also of interest,
	for instance if only approximations of $\nabla^2\CS(x_k)$ are available or if $\nabla^2\CS(x_k)$ is not positive semi-definite.
	
	\paragraph{Costs and benefits of a structured seed matrix}
	The search direction $d_k$ in \LBFGS~is obtained from $B_k d_k = -\nabla\CJ(x_k)$. It is a big advantage of the choice $B_k^{(0)}=\tau_k I$ 
	that $d_k$ can be computed without solving a linear system; this is due to the Sherman--Morrison formula and the fact that $B_k^{(0)}=\tau_k I$ is trivial to invert. 
	In contrast, using~\cref{eq_choiceseedmatrix} 
	requires to solve a linear system with $B_k^{(0)}=\tau_kI+S_k $ as coefficient matrix. 
	It is therefore critical for the performance of our method that such linear systems can be solved efficiently, be it iteratively or otherwise.
	For many regularizers in inverse problems the properties of $\nabla^2\CS(x_k)$
	imply that this assumption is valid for $S_k=\nabla^2\CS(x_k)$ or an approximation $S_k\approx\nabla^2\CS(x_k)$.
	Still, in view of the additional linear algebra costs, 
	the question arises whether using \cref{eq_choiceseedmatrix} can actually lower the \emph{run-time} of \LBFGS.
	While the answer to this question is problem-dependent in general, 
	extensive numerical evidence clearly shows that using a better approximation of the Hessian $\nabla^2\CJ(x_k)$ as seed matrix can be vastly superior to the standard choice $B_k^{(0)}=\tau_k I$ for large-scale real-world problems, cf. \cite{JBES04,Heldmann2006,FAIR09,KR13,ACG18,YGJ18,AM21,BDLP22}. In this paper we observe the same effect in the structured setting \cref{eq_SO} for choices of the form \cref{eq_choiceseedmatrix}. 
	
	\paragraph{The choice of the scaling factor in the structured \LBFGS~method}
	
	In classical \LBFGS, the scalar $\tau_k$ for $B_k^{(0)}=\tau_k I$ is often computed according to the Oren-Luenberger scaling strategy~\cite{Oren1982}. 
	The common choice $\tau_k=y_{k-1}^T y_{k-1}/ y_{k-1}^T s_{k-1}$ \cite[(7.20)]{Nocedal2006} is obtained in this way, for instance.
	We will apply the same strategy to \cref{eq_choiceseedmatrix} to derive suitable values for $\tau_k$. 
	These values, along with a novel adaptive choice, are included in the convergence analysis and in the numerical experiments.

	\subsection{Main convergence results for the structured \LBFGS~method}\label{sec_introconvana}
	
	Next we briefly discuss the main convergence results of this work. 
	Among others, this allows us to highlight that \emph{cautious updates} are an important ingredient of the new method. 
	
	In the available literature for \LBFGS-type and BFGS-type methods, global and linear convergence 
	is only established under the assumption of strong convexity 
	of the objective in the level set associated to the starting point; 
	cf., for instance, \cite{Liu1989,BN89,YSWHL18,GG19,VL19,SXBN22}. 
	This excludes many interesting objective functions and, in particular, is not appropriate for the highly non-convex nature of 
	many inverse problems, including those arising in medical image registration. 
	In this paper, we prove global and linear convergence in different settings, none of which assumes strong convexity of $\CJ$ in a level set. 
	For instance, we show $\lim_{k\to\infty}\norm{\nabla\CJ(x_k)}=0$ and that if
	$(x_k)$ has a cluster point $\xopt$ near which $\CJ$ is strongly convex, then 
	the entire sequence $(x_k)$ converges linearly to~$\xopt$; cf. \cref{thm_globconv} and \cref{thm_rateofconvstrongminimizer}. 
	This type of result, which is available for many other optimization methods but novel for \LBFGS, 
	is very valuable since it replaces the assumption of \emph{global} strong convexity by \emph{local} strong convexity. 
	The key to obtaining such a comparably strong result is to use \emph{cautious updates} \cite{LiFukucautiousupdates} to control the condition number of the Hessian approximations $(B_k)$. 
	While we defer the specification of the cautious updates that we use to \cref{sec_SLBFGS}, 
	we stress that they are an important contribution of the paper.
	To underline this point, note that in the existing literature for \LBFGS- and BFGS-type methods, global convergence for non-convex objectives 
	is often only established in the sense that $\liminf_{k\to\infty}\norm{\nabla\CJ(x_k)}=0$, e.g. in \cite{LF01,XLW13,KD15,Zh20,YWS20,YZZ22},
	whereas we prove $\lim_{k\to\infty}\norm{\nabla\CJ(x_k)}=0$. 
	We are not aware of works on \LBFGS~methods with cautious updates that
	prove this stronger form of global convergence. It is, however, proved in \cite{WMGL17} for a stochastic \LBFGS-type method, 
	in \cite{BJMT22} for sampled \LBFGS, and in \cite{KS23} for iteratively regularized \LBFGS,
	but linear convergence is not established in these works. 
	
	In \cref{thm_rateofconvPL} we prove that our structured \LBFGS~method 
	converges linearly under a \KL-type inequality. 
	This complements the aforementioned result on linear convergence because the \KL~condition, in contrast to local strong convexity, allows for 
	locally non-unique minimizers. To the best of our knowledge, this is the first time that linear convergence of an \LBFGS-type method is shown using a \KL~inequality. 
	
	Let us mention three more aspects of the theoretical results in this paper. 
	First, we allow $\ell=0$ in the \LBFGS~method, where $\ell$ is the number of stored update vectors, cf. \cref{eq_defLBFGSupd1}. This means that no updates are applied, hence $B_k=B_k^{(0)}=\tau_k I + S_k$. For the choice $S_k=0$ this is a \emph{Barzilai--Borwein method} \cite{BB88,DF05}.
	This class of methods has recently received renewed interest, e.g. 
	\cite{BDH19,AK22,MMPS22,CPRZ23}, but to the best of our knowledge this is the first time that a \emph{structured} Barzilai--Borwein method is considered. 
	Second, we carry out the convergence analysis in Hilbert spaces. This is appropriate for inverse problems, but
	has not been done before for \LBFGS, despite its frequent use for these problems.
	We acknowledge, however, that BFGS has been considered in infinite dimensional settings, e.g. in
	\cite{GriewankInfDimBFGS,Gilbert1989,MayorgaQuintanaInfDimQN,GruverSachsInfDimQN,Kupfer,PetraInfDimQN}.
	Third, the convergence analysis lowers the usual differentiability requirements in that it does not require $\CJ$ to be twice continuously differentiable. 
		
	\subsection{Related work}\label{lit_relwork}
	
	The idea to exploit the problem structure for constructing a better approximation of $\nabla^2\CJ(x_k)$ and use it as a seed matrix in \LBFGS~appears in various numerical studies \cite{JBES04,Heldmann2006,FAIR09,KR13,ACG18,YGJ18,AM21,BDLP22},
	sometimes under the name \emph{preconditioned \LBFGS}. 
	Together, these works cover a wide range of real-life problems like molecular energy minimization \cite{JBES04,KR13}, independent component analysis 
	\cite{ACG18}, medical image registration \cite{Heldmann2006,FAIR09,AM21}, logistic regression and optimal control \cite{BDLP22}.
	The authors consistently report significant speed ups on these large-scale problems over all methods that are used for comparison,
	including classical \LBFGS, Gauss--Newton and truncated Newton.
	However, in contrast to this paper they do not provide a convergence analysis and they do not use cautious updates in their methods.
	Also, except for \cite{AM21,BDLP22} the choice of $\tau_k$ is not studied in the numerical experiments. 
	For $B_k^{(0)}=\tau_k I$ this choice can meaningfully influence the numerical performance of \LBFGS, cf. for instance 
	the classical works \cite{Oren1982,Liu1989,Gilbert1989}, 
	so we investigate it more thoroughly than \cite{AM21,BDLP22}. 
	In comparison to our conference paper \cite{AM21} the method that we present here is also significantly more effective and we test it on a substantially larger problem set. 
	The structured methods from \cite{BDLP22}, furthermore, rely on a different formula for the quasi-Newton update and they use compact representations instead of the two-loop recursion that our method is built on; we discuss these aspects in detail in \cref{sec_difftoPetra}. 
	On image registration problems our method outperforms the methods from \cite{BDLP22}, cf. \cref{sec:experiments}.
	
	While we are not aware of further works on structured quasi-Newton methods in the \emph{limited memory} setting, structured variants of \emph{full memory} quasi-Newton methods have been studied, e.g., in general frameworks \cite{DW81,DW85,EM91,YY96}, for the BFGS method \cite{DMT89}, for the Powell--Symmetric--Broyden method \cite{La00}, and for Broyden's method \cite[Section~3]{HK92}, the latter also in a nonsmooth setting \cite{MR21,MR22}. 
	Most often, structured quasi-Newton methods have been considered for least squares problems in the form of Gauss-Newton-type schemes, 
	for instance in \cite{DGW81,DMT89,H94,H05,ZC10,WLQ10,MM19}. Among these, only \cite{H05} considers objectives of the form \cref{eq_SO}, but provides no convergence analysis and does not develop a structured \LBFGS~method. 
	
	\emph{Globalization} of \LBFGS-type methods is usually achieved by line search strategies, but trust region approaches and iterative regularization have also been studied, 
	see for instance \cite{BGZY17} and the references therein, respectively, \cite{L14,TP15,TSY22,KS23}. However, a rate of convergence has not been established for these methods.
		
	\subsection{Code availability}

	An implementation of our structured \LBFGS method that includes an example from the numerical section of this paper
	is freely available at \href{https://github.com/hariagr/SLBFGS}{https://github.com/hariagr/SLBFGS}.
	
	\subsection{Organization and notation}
	
	The paper is organized as follows. 
	To set the stage, \cref{sec:lbfgsIntro} recalls standard \LBFGS, followed 
	by the presentation of the structured \LBFGS~method in \cref{sec_SLBFGS}. 
	\Cref{sec_convana} provides the convergence analysis
	and \cref{sec:experiments} the numerical experiments. A summary is given in \cref{sec:conclusion}.
	
	We use $\N=\{1,2,3,\ldots\}$ and $\N_0=\N\cup\{0\}$.
	The scalar product of $x,y\in\CX$ is indicated by $x^T y$, and for $x\in\CX$ the linear functional
	$y\mapsto x^T y$ is denoted by $x^T$. 
	The induced norm is $\norm{x}$. 
	We write $A\in\Lin(\CX)$ to express the fact that $A:\CX\rightarrow\CX$ is a bounded linear mapping.
	

\enlargethispage{2\baselineskip}

\section{The classical \LBFGS~method}\label{sec:lbfgsIntro}
		
	In this section we briefly discuss \LBFGS, the most popular limited memory quasi-Newton method.
	It applies to the minimization of a 
	continuously differentiable function $\CJ:\CX\rightarrow\R$ on a Hilbert space $\CX$, 
	$\min_{x\in\CX} \CJ(x)$, with $\CJ$ not necessarily of the form \cref{eq_SO}. 
	We state the algorithm in terms of $H_k:=B_k^{-1}$ instead of $B_k$, 
	where $B_k$ is computed according to \cref{eq_defLBFGSupd1}; cf. Algorithm~\ref{algo:lbfgs} below.
	
	\begin{algorithm2e}[h!]
		\SetAlgoRefName{LBFGS}
		\DontPrintSemicolon
		\caption{Inverse \LBFGS~method, cf. \cite[Alg.~7.5]{Nocedal2006}}
		\label{algo:lbfgs}
		\KwIn{$x_0\in \CX$, $\ell>0$, $\tl\in[0,\infty)$}
		\For{$k=0,1,2,\ldots$}
		{
			Let $m:=\max\{0,k-\ell\}$\\
			\lIf{$\norm{\nabla\CJ(x_k)}\leq\tl$}{\outp~$x_k$ and \brk}
			Choose symmetric positive definite $H_k^{(0)}\in\Lin(\CX)$\tcp*{Often, $H_k^{(0)}=\hat\tau_k I$ with $\hat\tau_k>0$}
			Compute $d_k:=-H_k\nabla\CJ(x_k)$ from $H_k^{(0)}$ and the stored pairs $\{(s_j,y_j)\}_{j=m}^{k-1}$ using the two-loop recursion \cite[Algorithm~7.4]{Nocedal2006}\label{line_tlrunstrbfgs}\\ 
			Compute step length $\alpha_k>0$ using a line search\\
			Let $s_k:=\alpha_k d_k$, $x_{k+1}:=x_k + s_k$, $y_k:=\nabla\CJ(x_{k+1})-\nabla\CJ(x_k)$\\
			\lIf{$y_k^T s_k > 0$}{append $(s_k,y_k)$ to storage}
			\lIf{$k\geq \ell$}{remove $(s_{m},y_{m})$ from storage}
		}
	\end{algorithm2e}	
	
	\subsection{Step size selection}\label{sec_stepsizes}
	
	The step length $\alpha_k$ in Algorithm~\ref{algo:lbfgs} is usually computed in such a way that it satisfies the weak or the strong Wolfe--Powell conditions. 
	However, some authors determine $\alpha_k$ by backtracking until the Armijo condition holds. For later reference let us make this explicit. 
	For Armijo with backtracking we fix constants $\beta,\sigma\in(0,1)$. The step size $\alpha_k>0$ for the iterate $x_k$ with associated descent direction~$d_k$ is the largest number in $\{1,\beta,\beta^2,\ldots\}$ such that $x_{k+1}=x_k+\alpha_k d_k$ satisfies
	\begin{equation}\label{eq_armijocond}
	\CJ(x_{k+1}) \leq \CJ(x_k) + \alpha_k\sigma\nabla\CJ(x_k)^T d_k.
	\end{equation}
	For the Wolfe--Powell conditions, respectively, the strong Wolfe--Powell conditions we additionally fix $\eta\in(\sigma,1)$. 
	We accept any step size $\alpha_k>0$ that satisfies \cref{eq_armijocond} and
	\begin{equation}\label{eq_WolfePowellcond}
	\nabla\CJ(x_{k+1})^T d_k \geq \eta\nabla\CJ(x_k)^T d_k, \quad\text{ respectively, }\quad
	\left\lvert\nabla\CJ(x_{k+1})^T d_k\right\rvert \leq \eta\left\lvert\nabla\CJ(x_k)^T d_k\right\rvert.
	\end{equation}
	
	\subsection{Choice of the scaling factor \texorpdfstring{$\hat\tau_k$}{hat tau_k}}
	
	The two most commonly used scaling factors 
	for $H_k^{(0)}=\hat\tau_k I$ in inverse \LBFGS~methods are the BB scaling factors introduced by Barzilai and Borwein~\cite{BB88}.
	
	\begin{definition}[BB scaling]\label{def:tau:H}
	For $s_k,y_k\in\CX$ with~$\rho_k:=y_k^T s_k\neq 0$ we define
	\begin{equation*} 
	\hat\tau_{k+1}^\Hy:=\rho_k/\norm{y_k}^2 \qquad\text{ and }\qquad
	\hat\tau_{k+1}^\Hs:=\norm{s_k}^2/\rho_k.
	\end{equation*}
	\end{definition}
	
	A comprehensive treatment of the two scaling factors in a finite dimensional setting including numerical experiments is contained in \cite[Section~4.1]{Gilbert1989}. Its conclusion is that $\hat\tau_k^\Hy$ is preferable for inverse \LBFGS,
	and this view has since prevailed, cf. e.g. \cite[(7.20)]{Nocedal2006}.
		
	It can be proven in the same way as in finite dimensions \cite[Section~4.1]{Gilbert1989} that the BB factors are related to each other and to the inverse of the \emph{average Hessian} $\overline{\nabla^2\CJ_k}$ as follows.

	\begin{lemma}\label{lem:tau}
	Let $\CJ:\CX\rightarrow\R$ be twice continuously differentiable. Let $x_k,x_{k+1}\in\CX$ be such that 
	$y_k:=\nabla\CJ(x_{k+1})-\nabla\CJ(x_k)$ and $s_k:=x_{k+1}-x_k$ satisfy
	$y_k^T s_k>0$. Let 
	\begin{equation*}
	\overline{\nabla^2\CJ_k}:=\int_0^1 \nabla^2\CJ(x_k+t s_k)\,\mathrm{d}t 
	\end{equation*}
	be positive semi-definite. 
	Then for $\hat \tau_{k+1}^\Hy$ and $\hat\tau_{k+1}^\Hs$ from \cref{def:tau:H} we have 
	\begin{equation*}
	\Norm{\overline{\nabla^2\CJ_k}}^{-1} 
	\le\hat\tau_{k+1}^\Hy\le\hat\tau_{k+1}^\Hs
	\le \Norm{\Bigl(\overline{\nabla^2\CJ_k}\Bigr)^{-1}},
	\end{equation*}
	where the upper bound requires that $\overline{\nabla^2\CJ_k}$ is invertible.
	\end{lemma}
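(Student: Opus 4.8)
The plan is to reduce everything to the secant relation and a single elementary operator inequality, mimicking the finite-dimensional argument of \cite[Section~4.1]{Gilbert1989} with the spectral theorem for bounded self-adjoint operators in place of the eigendecomposition. Abbreviate $A:=\overline{\nabla^2\CJ_k}$. Since $\CJ$ is twice continuously differentiable, $t\mapsto\nabla^2\CJ(x_k+ts_k)$ is a norm-continuous map into the bounded self-adjoint operators on $\CX$, so $A\in\Lin(\CX)$ is well defined, bounded and self-adjoint, and by hypothesis positive semi-definite. Applying the fundamental theorem of calculus to $t\mapsto\nabla\CJ(x_k+ts_k)$ gives the secant relation $y_k=As_k$; combined with $y_k^Ts_k>0$ this shows $s_k\neq0$ and $y_k\neq0$, so the two scaling factors are well defined, and $\rho_k=y_k^Ts_k=s_k^TAs_k$.

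The middle inequality $\hat\tau_{k+1}^\Hy\le\hat\tau_{k+1}^\Hs$ is just Cauchy--Schwarz: $\rho_k^2=(y_k^Ts_k)^2\le\norm{y_k}^2\norm{s_k}^2$. For the two outer inequalities I would isolate the following fact, valid for every bounded self-adjoint positive semi-definite $B\in\Lin(\CX)$ and every $v\in\CX$:
\[
\norm{Bv}^2 \le \norm{B}\,v^TBv .
\]
This follows from the continuous functional calculus: the spectrum of $B$ lies in $[0,\norm{B}]$, hence $f(t):=\norm{B}t-t^2=t(\norm{B}-t)\ge0$ on $\sigma(B)$, so $\norm{B}B-B^2=f(B)$ is positive semi-definite, and pairing with $v$ while using $v^TB^2v=\norm{Bv}^2$ gives the claim. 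The lower bound then comes from applying this with $B=A$ and $v=s_k$, since $\hat\tau_{k+1}^\Hy=\rho_k/\norm{y_k}^2=(s_k^TAs_k)/\norm{As_k}^2\ge\norm{A}^{-1}$. For the upper bound, when $A$ is additionally invertible it is boundedly positive definite and $A^{-1}\in\Lin(\CX)$ is again self-adjoint and positive semi-definite; from $s_k=A^{-1}y_k$ and $\rho_k=s_k^TAs_k=y_k^TA^{-1}y_k$ we get $\hat\tau_{k+1}^\Hs=\norm{s_k}^2/\rho_k=\norm{A^{-1}y_k}^2/(y_k^TA^{-1}y_k)$, which is $\le\norm{A^{-1}}$ by the same fact applied with $B=A^{-1}$ and $v=y_k$.

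The routine algebra here is identical to the finite-dimensional case, so the only point that needs genuine care is the operator-theoretic infrastructure in infinite dimensions: that the (Bochner/Riemann) integral defining $A$ inherits self-adjointness and positive semi-definiteness from the integrand, that the fundamental theorem of calculus applies to the $\CX$-valued map $t\mapsto\nabla\CJ(x_k+ts_k)$, and that $\sigma(B)\subseteq[0,\norm{B}]$ together with the continuous functional calculus can be invoked for positive semi-definite $B$. All of these are standard, so I do not expect a real obstacle; the statement is essentially a bookkeeping exercise once the secant relation $y_k=As_k$ and the inequality $\norm{Bv}^2\le\norm{B}\,v^TBv$ are in hand.
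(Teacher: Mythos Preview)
Your proposal is correct and follows precisely the route the paper indicates: the paper does not spell out a proof but simply states that the result ``can be proven in the same way as in finite dimensions \cite[Section~4.1]{Gilbert1989}'', and you carry out exactly this program, replacing the eigendecomposition by the functional calculus for bounded self-adjoint operators. The key operator inequality $\norm{Bv}^2\le\norm{B}\,v^TBv$ that you isolate is the natural Hilbert-space substitute for the Rayleigh-quotient bounds used in Gilbert's argument, so there is no meaningful methodological difference.
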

	
	\begin{remark}\label{rem_scalingfactorssecantequation}
	Since $\norm{\overline{\nabla^2\CJ_k}}^{-1}$ and $\norm{(\overline{\nabla^2\CJ_k})^{-1}}$
	correspond to the smallest, respectively, largest eigenvalue of $(\overline{\nabla^2\CJ_k})^{-1}$,
	\cref{lem:tau} shows that $\hat\tau_{k+1}^\Hy$ and $\hat\tau_{k+1}^\Hs$ approximate the spectrum of the inverse of the averaged Hessian. 
	Furthermore, it is easy to see that $\hat\tau_{k+1}^\Hy$ minimizes $\hat\tau\mapsto\norm{\hat\tau y_k-s_k}$ for $\hat\tau\in\R$.
	That is, $\hat\tau_{k+1}^\Hy$ yields the least squares fit of $y_k$ to $s_k$ that approximately realizes
	the secant equation $H_{k+1}^{(0)}y_k=s_k$ for $H_{k+1}^{(0)}=\hat\tau_{k+1} I$. 
	The secant equation plays a crucial role
	in designing quasi-Newton methods, cf., e.g., \cite{DS79,Nocedal2006},
	and we will use the same principle for our \LBFGS~method in \cref{sec_choiceoftaukinLBFGSM}.
	Similar comments apply to $\hat\tau_{k+1}^\Hs$, which minimizes $\norm{y_k-s_k/\hat\tau}$ 
	and may be viewed as a least squares fit of $s_k$ to $y_k$, cf. also \cref{fig:pz}.
	\end{remark}
	

\enlargethispage{2\baselineskip}

\section{The structured \LBFGS~method}\label{sec_SLBFGS}
	
	The proposed structured \LBFGS~algorithm for $\min_{x\in\CX}\CJ(x)$ with $\CJ$ having the form \cref{eq_SO} and the seed matrix chosen according to 
	\cref{eq_choiceseedmatrix} is summarized as Algorithm~\ref{alg_hybrid}.
	The main differences to Algorithm~\ref{algo:lbfgs} are the choice of the seed matrix
	and the two cautious updates that affect if $(s_k,y_k)$ enters the storage and that restrict the choice of $\tau_{k+1}$.  
	In the following subsections we provide the missing specifications for Algorithm~\ref{alg_hybrid} and we comment on it. 
	
	\begin{algorithm2e}
		\SetAlgoRefName{SLBFGS}
		\DontPrintSemicolon
		\caption{Structured inverse \LBFGS~method}
		\label{alg_hybrid}
		\KwIn{$x_0\in \CX$, $\tl\geq 0$, $\ell\in\N_0$, $c_0\geq 0$, $C_0\in [c_0,\infty]$, $c_s,c_1,c_2>0$}
		Let $\tau_0\in(0,\infty)$\\
		\For{$k=0,1,2,\ldots$}
		{
			Let $m:=\max\{0,k-\ell\}$\\
			Choose symmetric positive semi-definite $S_k\in\Lin(\CX)$\\ 
			Let $B_k^{(0)}:=\tau_kI+S_k$\tcp*{choice of seed matrix}
			Compute $d_k:=-H_k\nabla\CJ(x_k)$ from $B_k^{(0)}$ and the stored pairs $\{(s_j,y_j)\}_{j=m}^{k-1}$ using the two-loop recursion \cite[Algorithm~7.4]{Nocedal2006}\label{line_tlr}\\
			Compute step length $\alpha_k>0$ using a line search\label{line_nols}\\ 
			Let $s_k:=\alpha_k d_k$, $x_{k+1}:=x_k + s_k$, $y_k:=\nabla\CJ(x_{k+1})-\nabla\CJ(x_k)$\\
			\lIf{$y_k^T s_k > c_s\norm{s_k}^2$}{append $(s_k,y_k)$ to storage\label{line_acceptanceofysforstorage}\tcp*[f]{cautious update~1}}
			\lIf{$k\geq\ell$}{remove $(s_m,y_m)$ from storage}
			\lIf{$\norm{\nabla\CJ(x_{k+1})}\leq\tl$}{\outp~$x_{k+1}$ and \brk\label{line_term}}
			Let $z_k:=y_k-S_{k+1}s_k$\label{line_zk}\\
			Let $\omega_{k+1}^l:=\min\{c_0,c_1\norm{\nabla\CJ(x_{k+1})}^{c_2}\}$ and $\omega_{k+1}^u:=\max\{C_0,(c_1\norm{\nabla\CJ(x_{k+1})}^{c_2})^{-1}\}$\label{line_defomegakplusone}\\
			\lIf{$z_k^Ts_k>0$}{choose $\tau_{k+1}\in [\tau^\Bp,\tau^\Bz]$, where $\tau^\Bp$ and $\tau^\Bz$ are computed according to \cref{def:tau:B} using
				$(s,z)=(s_k,z_k)$,
				$\tau_{\min}=\omega_{k+1}^l$, $\tau_{\max}=\omega_{k+1}^u$\label{line_choicetauk1}\tcp*[f]{cautious update~2}}
			\lElse{choose $\tau_{k+1}\in [\tau^\Bp,\tau^\GM]$, where $\tau^\Bp$ and $\tau^\GM$ are computed according to \cref{def:tau:B} using
				$(s,z)=(s_k,z_k)$, $\tau_{\min}=\omega_{k+1}^l$, $\tau_{\max}=\omega_{k+1}^u$\label{line_choicetauk2}\tcp*[f]{cautious update~2}}
		}
	\end{algorithm2e}

	\subsection{Choice of the scaling factor \texorpdfstring{$\tau_k$}{tau_k}}\label{sec_choiceoftaukinLBFGSM}
	
	A key element of our method is to use 
	\begin{equation}\label{eq_choiceofseedmatrixgeneralversion}
		B_k^{(0)} = \tau_k I + S_k
	\end{equation}
	as seed matrix, where $\tau_k\geq 0$. Since it is well-known that the choice of the corresponding parameter $\hat\tau_k$ in Algorithm~\ref{algo:lbfgs}
	can have a material influence on the performance, we devote this section to the derivation of suitable choices for $\tau_k$ in Algorithm~\ref{alg_hybrid}.
	
	Following the Oren-Luenberger scaling strategy~\cite{Oren1982}, the scalar $\tau_k$ is
	computed at each iteration to satisfy the well-known secant equation in a least
	squares sense, cf. also \cref{rem_scalingfactorssecantequation}. 
	The secant equation for \cref{eq_choiceofseedmatrixgeneralversion} reads
	\begin{equation}\label{eq:Bk0Secant}
		y_k=B_{k+1}^{(0)} s_k=\tau_{k+1} s_k+S_{k+1} s_k
		\quad\iff\quad \tau_{k+1} s_k-z_k=0
		\text{ with }
		z_k:=y_k-S_{k+1} s_k.
	\end{equation}
	Note that $z_k$ appears in \cref{line_zk} of Algorithm~\ref{alg_hybrid}. 
	Due to \cref{eq:Bk0Secant} it seems reasonable to choose $\tau_{k+1}$ as the minimizer of $\norm{\tau s_k-z_k}$ or a related least squares problem. 
	Let us also mention that the use of cautious updates in Algorithm~\ref{alg_hybrid} makes it even more compelling to incorporate information 
	from $(s_k,y_k)$ in the seed matrix $B_{k+1}^{(0)}$. 
	Specifically, we observe that the cautious update in \cref{line_acceptanceofysforstorage} may prevent $(s_k,y_k)$ from entering the storage, 
	in which case the only remaining option for $(s_k,y_k)$ to influence $B_{k+1}$ is via $B_{k+1}^{(0)}$, cf. \cref{line_tlr}.

	We now discuss four choices for the scaling factor $\tau_{k+1}$, 
	two of which follow the BB strategies from \cref{def:tau:H}, one is an unbiased total least squares approach, and the
	last one is the geometric mean of the first two.
	We take an algebraic point of view to derive the scaling factors, 
	but note that they may also be derived through geometric and statistical arguments; see \cite{Draper1991} for details. 
	For $d_k=-H_k\nabla\CJ(x_k)$ to be a descent direction, 
	we want $H_k$ to be positive definite. 
	It is well known that this can be achieved by choosing $H_k^{(0)}$ positive definite and storing only those $(s_j,y_j)$ that satisfy $y_j^T s_j>0$. 
	However, to show convergence to stationary points we need to control the condition number of $H_k$ (or, equivalently, $B_k$). 
	To this end, we will confine $\tau_{k+1}$ to an interval $[\tau_{k+1}^{\min},\tau_{k+1}^{\max}]$, represented by $[\tau_{\min},\tau_{\max}]$ in the following definition.
	
	\begin{definition}\label{def:tau:B}
		Let $0\leq\tau_{\min}\leq\tau_{\max}\leq\infty$. We define 
		\begin{equation*}
			\proj:\R\rightarrow[\tau_{\min},\tau_{\max}], \qquad \proj(t):=\min\Bigl\{\max\{t,\tau_{\min}\},\tau_{\max}\Bigr\}.	
		\end{equation*}
		For $(s,z)\in\CX\times\CX$ with $s\neq 0$ let 
		$\rho:=z^T s$, $\lambda:=\frac{\norm{s}^2+\norm{z}^2-\sqrt{(\norm{s}^2-\norm{z}^2)^2+4\rho^2}}{2}$
		and
		\begin{equation*}
				\tau^\Bp:=\proj\biggl(\frac{\rho}{\norm{s}^{2}}\biggr),\qquad
				\tau^\GM:=\proj\biggl(\frac{\norm{z}}{\norm{s}}\biggr),\qquad
				\tau^\Bz:=\proj\biggl(\frac{\norm{z}^2}{\rho}\biggr),\qquad 
				\tau^\Bu:=\proj\biggl(\frac{\norm{z}^2-\lambda}{\rho}\biggr),
		\end{equation*}
		where $\tau^\Bz$ and $\tau^\Bu$ are only defined if $\rho\neq 0$.
	\end{definition}
	
	We note that for $\rho>0$ the scaling factors 
	$\tau^\Bp$, $\tau^\Bz$ and $\tau^\GM$ are the minimizers of the following least squares problems constrained by~$\tau_{\min}\leq\tau\leq\tau_{\max}$:
	$\tau^\Bp$ for~$\norm{\tau s-z}$, $\tau^\Bz$ for~$\norm{s-z/\tau}$ 
	and $\tau^\GM$ for~$\norm{\sqrt{\tau}s-z/\sqrt{\tau}}$; cf. also \cite[Section~2.1]{Dener2019}.
	The unbiased choice $\tau^\Bu$ is motivated in \cite{AM21}. 

	Next we relate the scaling factors to each other. We illustrate the relation for $\rho>0$ in \cref{fig:pz}.
	
	\begin{lemma}\label{lem_relationbetweendifferenttaus}
		For $\rho>0$ the scaling factors from \cref{def:tau:B} satisfy $0\leq\tau^\Bp\le \tau^\Bu\le \tau^\Bz$ and $0\leq\tau^\Bp\le\tau^\GM\le\tau^\Bz$.
		For $\rho\leq 0$ there holds $0\leq\tau_{\min}=\tau^\Bp\le \tau^\GM$.
	\end{lemma}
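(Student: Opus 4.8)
The plan is to read off all six inequalities from elementary facts about the clipping map $\proj$ and about the Gram matrix of $\{s,z\}$. The crucial structural observation is that $\proj$ is non-decreasing and maps $\R$ into $[\tau_{\min},\tau_{\max}]\subseteq[0,\infty]$. Hence $0\le\tau^\Bp$ is automatic, and for any pair among $\tau^\Bp,\tau^\GM,\tau^\Bz,\tau^\Bu$ it suffices to order the corresponding \emph{arguments} of $\proj$ and then apply monotonicity. The case $\rho\le 0$ is then immediate: since $\rho/\norm{s}^2\le 0\le\tau_{\min}$, the definition of $\proj$ forces $\tau^\Bp=\tau_{\min}$, while $\tau^\GM=\proj(\norm{z}/\norm{s})\ge\tau_{\min}$ by construction, giving $0\le\tau_{\min}=\tau^\Bp\le\tau^\GM$.

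For $\rho>0$ (so that $z\neq 0$ and all four factors are defined), I would first dispatch the chain $\tau^\Bp\le\tau^\GM\le\tau^\Bz$ using Cauchy--Schwarz: from $\rho=z^Ts\le\norm{z}\,\norm{s}$ one gets $\rho/\norm{s}^2\le\norm{z}/\norm{s}$ and $\norm{z}/\norm{s}\le\norm{z}^2/\rho$, and monotonicity of $\proj$ finishes it. For the chain through $\tau^\Bu$ I would use that $\lambda$ is, by inspection of the formula in \cref{def:tau:B}, the smaller root of $q(t):=t^2-(\norm{s}^2+\norm{z}^2)t+(\norm{s}^2\norm{z}^2-\rho^2)$, equivalently the smaller eigenvalue of the positive semi-definite Gram matrix of $\{s,z\}$; denote the larger root by $\Lambda$, so Vieta gives $\lambda+\Lambda=\norm{s}^2+\norm{z}^2$ and $\lambda\Lambda=\norm{s}^2\norm{z}^2-\rho^2$. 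Because $q(0)=\norm{s}^2\norm{z}^2-\rho^2\ge 0$ (Cauchy--Schwarz again) and the roots sum to a positive number, both roots are $\ge 0$, so $\lambda\ge 0$; and because $q(\norm{s}^2)=-\rho^2\le 0$, the point $\norm{s}^2$ lies between the roots, so $\norm{s}^2\le\Lambda$. With these two facts, $\tau^\Bu\le\tau^\Bz$ collapses to $(\norm{z}^2-\lambda)/\rho\le\norm{z}^2/\rho$, i.e. $\lambda\ge 0$, and $\tau^\Bp\le\tau^\Bu$ collapses to $\rho^2\le\norm{s}^2(\norm{z}^2-\lambda)=\rho^2+\lambda(\Lambda-\norm{s}^2)$, which holds since $\lambda\ge 0$ and $\Lambda\ge\norm{s}^2$; applying $\proj$ yields $0\le\tau^\Bp\le\tau^\Bu\le\tau^\Bz$.

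I expect the only non-mechanical part to be the handling of $\tau^\Bu$: one must recognize $\lambda$ as a root of the above quadratic (or as the smaller eigenvalue of the Gram matrix of $\{s,z\}$) and then extract exactly the two bounds $\lambda\ge 0$ and $\Lambda\ge\norm{s}^2$ by evaluating $q$ at $0$ and at $\norm{s}^2$. Once these are available, every remaining step is Cauchy--Schwarz combined with the monotonicity and range of the clipping $\proj$, so no further difficulty is anticipated.
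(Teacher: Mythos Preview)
Your proof is correct and follows essentially the same route as the paper's: monotonicity of $\proj$ reduces everything to ordering the arguments, Cauchy--Schwarz handles the $\tau^\Bp\le\tau^\GM\le\tau^\Bz$ chain and the case $\rho\le 0$, and the $\tau^\Bu$ chain is dealt with via the quadratic whose smaller root is $\lambda$. The only cosmetic difference is packaging: the paper states directly that $\lambda\ge 0$, $\norm{z}^2-\lambda\ge 0$, and the product identity $(\norm{z}^2-\lambda)(\norm{s}^2-\lambda)=\rho^2$, whereas you recover the same facts by identifying $\lambda$ as the smaller Gram-matrix eigenvalue, invoking Vieta, and evaluating $q$ at $0$ and at $\norm{s}^2$; both lead to the same two inequalities $\lambda\ge 0$ and $\norm{s}^2\le\Lambda$ (equivalently $\norm{s}^2-\lambda\ge 0$), from which $\tau^\Bp\le\tau^\Bu\le\tau^\Bz$ follows.
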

	
	\begin{proof}
		An application of the Cauchy-Schwarz inequality establishes the claim for $\rho\leq 0$.
		Let now $\rho>0$. 
		The Cauchy-Schwarz inequality implies $\tau^\Bp\le \tau^\Bu\le \tau^\Bz$.
		Since $\tau^\GM$ is the geometric mean of $\tau^\Bp$ and $\tau^\Bz$, 
		we have $\tau^\Bp\le\tau^\GM\le\tau^\Bz$.
		It is easy to confirm that $\lambda\geq 0$ and $\norm{z}^2-\lambda\geq 0$.
		It can also be checked that 
		$(\norm{z}^2-\lambda)(\norm{s}^2-\lambda)=\rho^2$.
		Taken together, we obtain $\tau^\Bp\le \tau^\Bu\le \tau^\Bz$.
	\end{proof}
	
	\begin{figure}[h]
		\centering
		\begin{tikzpicture}
			\begin{scope}[xshift=0]
				\coordinate (N) at (0,0);
				\coordinate (P) at (0:1);
				\coordinate (Z) at (45:2);
				\coordinate (Hy) at (45:{1/sqrt(2)});
				\coordinate (Hp) at (0:{4/(sqrt(2)*1)});
				
				\draw[-,thin]  (N) -- (0:3);
				\draw[->,thick] (N) -- (P) node[midway,below] {$s$};
				\draw[-] 		(N) -- (45:3);
				\draw[->,thick]	(N) -- (Z) node[midway,above] {$y$};
				
				\draw[dashed] 	(P) node[below] {$\hat\tau^\Hy$}-- (Hy)	 node[midway,above] {};
				\draw[dotted] 	(Z) -- (Hp) node[below] {$\hat\tau^\Hs$};
				\node at(1.5,-1) {Scaling factors from \cref{def:tau:H}};
			\end{scope}
			
			\begin{scope}[xshift=6cm]
				\pgfmathsetmacro{\ang}{45}
				\pgfmathsetmacro{\u}{3*cos(\ang)}
				\pgfmathsetmacro{\v}{3*sin(\ang)}
				\pgfmathsetmacro{\s}{\u/9}
				\coordinate (N) at (0,0);
				\coordinate (P) at (0:1);
				\coordinate (Z) at (\u,\v);
				\coordinate (Bp) at (\u,0);
				\coordinate (Bz) at (1/\s,0);
				\coordinate (Bu) at (0.9704/0.2414,0);
				\coordinate (GM) at (3,0);
				
				\draw[-,thin]   (N) -- (0:5);
				\draw[->,thick] (N) -- (P) node[below] {$s$};
				\draw[-] 		(N) -- (45:4);
				\draw[->,thick] (N) -- (Z) node[above] {$z$};
				
				\draw[dashed] 	(Z) -- (Bp) node[below] {$\tau^\Bp$};
				\draw[dotted] 	(Z) -- (Bz) node[below right] {$\tau^\Bz$};
				\draw[loosely dotted] (Z) -- (Bu) node[below] {$\tau^\Bu$};
				\draw[dash dot] (Z) -- (GM) node[below] {$\tau^\GM$};
				\node at(2,-1) {Scaling factors from \cref{def:tau:B}};
				
			\end{scope}
		\end{tikzpicture}
		
		\caption{Approximation strategies: 
			Left: BB step sizes for $H_k=\hat\tau_k I$; cf.~\cref{lem:tau}.
			Right: Scaling factors from \cref{def:tau:B} for $B_k=\tau_k I+S_k$;
			cf.~\cref{lem_relationbetweendifferenttaus}.}
		\label{fig:pz}
	\end{figure}

	\subsection{The two-loop recursion}\label{sec_tlrdisc}
	We assume the reader to be familiar with the two-loop recursion \cite[Algorithm~7.4]{Nocedal2006}. In it, the computation of $r=H_k^{(0)}q$ is required after the first loop, where $q$ has been determined during the first loop. 
	If $H_k^{(0)}$ is a scaled identity, $r$ is very cheap to obtain. 
	In \ref{alg_hybrid}, however, we must compute $r$ by solving $B_k^{(0)}r=q$, where $B_k^{(0)}=\tau_k I + S_k$.  
	Therefore, we require the choice of $S_k$ to be such that linear systems involving $B_k^{(0)}$ can be solved efficiently, at least approximately by an iterative method. For convex regularizer $\CS$ and the choice $S_k=\nabla^2\CS(x_k)$ this is satisfied if $\nabla^2\CS(x_k)$ is well-conditioned and sparse, which is often true for regularizers in inverse problems.
	In particular, these conditions are fulfilled for the real-world problems in the numerical experiments, so an iterative method with early stopping yields a good approximation of $r$. 
		
	\subsection{Cautious updating}\label{sec_cautiousupdating}
	Algorithm~\ref{alg_hybrid} uses \emph{cautious updating} \cite{LiFukucautiousupdates} both for the decision whether $(s_k,y_k)$ is stored and for the choice of $\tau_{k+1}$. The latter is realized by using $\tau_{\min}=\omega_{k+1}^l$ and $\tau_{\max}=\omega_{k+1}^u$, effectively safeguarding $\tau_{k+1}$ from becoming too small or too large in relation to $\nabla\CJ(x_{k+1})$. 
	This control over the condition number of $B_k$ is critical to prove, without convexity assumptions on $\CJ$, that 
	$\lim_{k\to\infty}\nabla\CJ(x_{k})=0$. Note that the safeguards $\omega_{k+1}^l$ and $\omega_{k+1}^u$ converge to zero, respectively, infinity. 
	Also observe that $(s_k,y_k)$ is only stored if it satisfies $y_k^T s_k > c_s\norm{s_k}^2$ for a small constant $c_s>0$.
	This is necessary to control the condition number of $B_k$. 
	Cautious updating has been used previously for the rank-two updates, e.g. in \cite{BJMT22,KS23}, 
	but using it for $\tau_k$ is new. 
	
	\subsection{A remark on quadratic regularizers}
	\label{sec:simpQuadReg}
	
	The computation of $z_k=y_k-S_{k+1} s_k$ in \eqref{eq:Bk0Secant} can be simplified if $\CS(x) = 0.5x^T A x+b^T x + c$ is quadratic and $S_k=\nabla^2\CS=A$ is used.
	We have 
	\begin{equation*}
	z_k = y_k-A s_k =\nabla \CJ(x_{k+1})-\nabla \CJ(x_{k})-A(x_{k+1}-x_{k})
	=\nabla \CD(x_{k+1})-\nabla \CD(x_{k}).
	\end{equation*}
	Computing $z_k$ as the difference between two consecutive gradients of the data
	term avoids the computation of $A s_k$ for the price of storing the two gradients $\nabla\CD(x_{k+1})$ and $\nabla\CD(x_k)$.
	
	\subsection{Comparison to other structured \LBFGS~methods}\label{sec_difftoPetra}
	
	In the introduction we briefly described how Algorithm~\ref{alg_hybrid} differs from existing structured \LBFGS~methods.
	We now comment on this in more detail for the recently proposed structured \LBFGS~methods \PetraM~and \PetraP~from \cite{BDLP22}.

	Let us first observe that in general the different methods use different operators $B_k$. 
	The operator $B_{k+1}$ in Algorithm~\ref{alg_hybrid} satisfies the secant equation 
	$B_{k+1} s_k = y_k = \nabla\CJ(x_{k+1})-\nabla\CJ(x_k)$, 
	while the operators in \cite{BDLP22} satisfy, in our notation, the secant equations
	$[B_{k+1}-\nabla^2\CS(x_k)]s_k = \nabla\CD(x_{k+1})-\nabla\CD(x_k)$ (\PetraM),
	respectively, $[B_{k+1}-\nabla^2\CS(x_{k+1})]s_k = \nabla\CD(x_{k+1})-\nabla\CD(x_k)$ (\PetraP). 
	This implies that these operators are generally different from each other. 
	On the other hand, all $B_{k+1}$ agree if $\CS$ is quadratic, $S_{k+1}=\nabla^2\CS$ and the same $\tau_{k+1}$ is used for the identity. 
	For the secant equations this is obvious from $\nabla^2\CS(x_{k+1})s_k=\nabla^2\CS(x_{k})s_k = \nabla\CS(x_{k+1})-\nabla\CS(x_{k})$.
		
	Another important difference is that the algorithms in \cite{BDLP22} use \emph{compact representations}~\cite{BNS94} of the limited memory matrices, whereas Algorithm~\ref{alg_hybrid} is based on a two-loop recursion. While the \PetraM~updates can also be implemented with a two-loop recursion, this is not true for \PetraP. 
	This seriously hampers the numerical performance as utilizing the seed matrix $B_k^{(0)}=\tau_k I+S_k$ in the two-loop recursion requires to solve one linear system of type $B_k^{(0)}x = b$ per \LBFGS~iteration, whereas the compact representation requires solving $2\ell+1$ such linear systems, 
	$\ell$ being the number of update pairs. 
	Consequently, \PetraP~turns out to be substantially slower than the other two methods. 
		
	In summary, there are meaningful differences between the structured \LBFGS~methods of \cite{BDLP22} and Algorithm~\ref{alg_hybrid}. 
	We compare their numerical performance 
	in \cref{sec:experiments}. 
	
	\subsection{A robust choice of the scaling factor \texorpdfstring{$\tau_k$}{tau_k} for image registration}\label{sec:adaptInit}
	
	As already alluded to, the choice of $\tau_k$ in Algorithm~\ref{alg_hybrid} can have a significant impact on the performance. 
	We now discuss 
	a robust choice of this parameter for image registration problems. 
	The result is summarized in Algorithm~\ref{alg_adaptivechoiceoftau} and we include it in the numerical experiments.
	
	Let us start with the case $\rho=z^T s \leq 0$, cf. \cref{line_choicetauk2}. 
	In that case, we always choose $\tau_{k+1} = \tau^\GM$ since this proved most effective in the numerical experiments. It remains to discuss the case $\rho>0$.
	
	One possibility for that case is to consistently select the same scaling factor from \cref{def:tau:B}, e.g., to use $\tau_{k+1}=\tau^\Bp$ for all $k$. 
	In the experiments on image registration problems we observe that most often either $\tau^\Bp$ or $\tau^\GM$ produce the lowest run-time while $\tau^\Bz$ achieves the highest accuracy. However, sometimes $\tau^\Bp$ and $\tau^\GM$ are quite inaccurate in comparison to $\tau^\Bz$ and sometimes convergence for $\tau^\Bz$ is extremely slow. 
	To achieve a better balance we choose $\tau_{k+1}$ as a weighted geometric mean of $\tau^\Bp$, $\tau^\GM$ and $\tau^\Bz$, where the weights $\wbp$,  $\wgm$ and $\wbz$ satisfy $\wbp + \wgm + \wbz = 1$, but are allowed to change at every iteration. 
	In the first iterations, we make $\wbp$ the largest weight. 
	As the iterations progress, we gradually shift towards $\tau^\GM$ by decreasing $\wbp$ while increasing $\wgm$. 
	When it is estimated that the algorithm is somewhat close to termination, we shift towards $\tau^\Bz$.
	The velocity with which the scaling factor moves to $\tau^\GM$, respectively, $\tau^\Bz$ depends on the number of line searches.
	Specifically, if the number of line searches is large this indicates that $d_k$ is poorly scaled, so we increase the rate with which we move to the larger
	scaling factor $\tau^\GM$, resp., $\tau^\Bz$ (recall from \cref{lem_relationbetweendifferenttaus} that $\tau^{\Bp}\leq\tau^\GM\leq\tau^\Bz$ if $\rho>0$).

\begin{algorithm2e}[h!]
\SetAlgoRefName{ADAP}
\DontPrintSemicolon
\caption{Adaptive choice of $\tau_{k+1}$ in \cref{line_choicetauk1}--\cref{line_choicetauk2} of Algorithm~\ref{alg_hybrid}}
\label{alg_adaptivechoiceoftau} 
\KwIn{$\Delta_0\in[0.5,1]$, $\Delta_1\in [0,1]$, $0 < \eps_1 < \eps_0 << 1$, $0 < \eta_0 < \eta_1 < \eta_2 < 1$, $\beta\in(0,1)$,$\wbp_{-1}:=0$}
Compute $\tau^\Bp$, $\tau^\GM$ and $\tau^\Bz$ from \cref{def:tau:B} with $(s,z)=(s_k,z_k)$, $\tau_{\min}=\omega_{k+1}^l$, $\tau_{\max}=\omega_{k+1}^u$\\
Let $\nu\geq 1$ be the number of line search steps taken in \cref{line_nols} of Algorithm~\ref{alg_hybrid}\\
\lIf{$k = 0$} 
{let $\wbp_k := \Delta_0$, $\wgm_k := 1 - \wbp_k$, $\wbz_k := 0$ \tcp*[f]{prefer $\tau^\Bp$ at start}}
\lElseIf{$|\CJ(x_{k+1})-\CJ(x_k)| \leq \eps_1 |\CJ(x_k)|$\label{line_eps1}}{let $\alpha := \eta_2$ \tcp*[f]{large progress}}
\lElseIf{$|\CJ(x_{k+1})-\CJ(x_k)| \leq \eps_0 |\CJ(x_k)|$}{let $\alpha := \eta_1$ \label{line_eta1}\tcp*[f]{medium progress}}
\lElse{let $\alpha := \eta_0$\tcp*[f]{small progress}}
\lIf{$\wbp_{k-1} > 0$} 
{let $\wbp_k := \max\{\wbp_{k-1} - \alpha \nu,0\}$, $\wgm_k := 1 - \wbp_k$\label{line_eta0}\tcp*[f]{shift weight from $\tau^\Bp$ to $\tau^\GM$}}
\lIf{$\wgm_{k-1} \geq 1$ or $\wbz_{k-1}>0$}{let $\wgm_k := \max\{\wgm_{k-1} - \beta \nu, \Delta_1\}$, $\wbz_k := 1 - \wgm_k$\label{line_beta}\tcp*[f]{shift $\tau^\GM$ to $\tau^\Bz$}}
Let $\tau_{k+1}:=(\tau^\Bp)^{\wbp_k}(\tau^\GM)^{\wgm_k}(\tau^\Bz)^{\wbz_k}$ \tcp*{Define $\tau_{k+1}$ based on obtained weights}
\lIf{$z_k^T s_k \leq 0$}{let $\tau_{k+1}:=\tau^\GM$\label{line_stz_1}\tcp*[f]{prefer $\tau^\GM$ to cut off}}
\end{algorithm2e}
	

\section{Convergence analysis}\label{sec_convana}
	
	This section is devoted to the convergence of Algorithm~\ref{alg_hybrid}. 
	After some preliminaries we show global convergence in \cref{sec_globconv}
	and linear convergence in \cref{sec_linconv}.
	The analysis takes place in an infinite dimensional Hilbert space $\CX$, which is new for \LBFGS.	
	We will use the notation $\LinPDX:=\{M\in\Lin(\CX): \,M \text{ is symmetric positive definite}\}$. 
		
	\subsection{Preliminaries}
	
	In this section we estimate $\norm{B_k}$ and $\norm{B_k^{-1}}$ and we show that Algorithm~\ref{alg_hybrid} is well-defined.
	
	\begin{lemma}\label{lem_generalresultonnormbounds}
		Let $M\in\N_0$ and $\kappa_1,\kappa_2>0$. For all $j\in\{1,\ldots,M\}$ let $(s_j,y_j)\in\CX\times\CX$ 
		satisfy 
		\begin{equation*}
			\frac{y_j^T s_j}{\norm{s_j}^2}\geq \frac{1}{\kappa_1}\qquad\text{ and }\qquad
			\frac{y_j^T s_j}{\norm{y_j}^2}\geq \frac{1}{\kappa_2}. 
		\end{equation*}
		Moreover, let $B^{(0)}\in\LinPDX$. Then the \LBFGS~update $B$, obtained from the recursion \cref{eq_defLBFGSupd1} with $B_k^{(0)}=B^{(0)}$, $m=1$ and $\ell=M$, 
		satisfies $B\in\LinPDX$ as well as the estimates
		\begin{equation}\label{eq_estlargestEV}
			\norm{B}\leq\norm{B^{(0)}} + M \kappa_2,
		\end{equation}
		and 
		\begin{equation}\label{eq_estsmallestEV}
			\norm{B^{-1}}\leq 5^M\max\bigl\{1,\norm{(B^{(0)})^{-1}}\bigr\}\max\bigl\{1,\kappa_1^M,(\kappa_1 \kappa_2)^M\bigr\}.
		\end{equation}
	\end{lemma}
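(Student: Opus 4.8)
The plan is to analyze the L-BFGS recursion \cref{eq_defLBFGSupd1}--\cref{eq_defU} one update at a time, tracking how each rank-two modification changes the extreme eigenvalues of the operator. The positive-definiteness $B\in\LinPDX$ follows by a standard induction: each $\upd(s_j,y_j,B^{(j)})$ adds the rank-one positive term $y_jy_j^T/(y_j^Ts_j)$ and subtracts $B^{(j)}s_js_j^TB^{(j)}/(s_j^TB^{(j)}s_j)$; the classical BFGS argument (e.g.\ as in \cite{Nocedal2006}) shows that if $B^{(j)}\in\LinPDX$ and $y_j^Ts_j>0$ then $B^{(j+1)}\in\LinPDX$, and the curvature hypotheses $y_j^Ts_j/\norm{s_j}^2\geq 1/\kappa_1>0$ guarantee $y_j^Ts_j>0$. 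In infinite dimensions one must additionally note that $\upd(s,y,B)$ maps $\Lin(\CX)$ to $\Lin(\CX)$ (the two rank-one pieces are bounded since $s,y\in\CX$ and $s^TBs>0$), and that a symmetric operator bounded below by a positive multiple of the identity is invertible with bounded inverse; this handles the subtlety that "positive definite" in Hilbert space should be read as "bounded below by $\delta I$ for some $\delta>0$," which I would make explicit.

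For the upper bound \cref{eq_estlargestEV}, I would simply take norms through the recursion. For any symmetric positive definite $B$ and any $v$, $0\preceq Bss^TB/(s^TBs)$, so dropping that term gives $B^{(j+1)}\preceq B^{(j)} + y_jy_j^T/(y_j^Ts_j)$, whence $\norm{B^{(j+1)}}\leq\norm{B^{(j)}} + \norm{y_j}^2/(y_j^Ts_j)\leq\norm{B^{(j)}}+\kappa_2$ using the second curvature hypothesis. Iterating $M$ times from $B^{(0)}$ yields $\norm{B}\leq\norm{B^{(0)}}+M\kappa_2$.

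The lower eigenvalue bound \cref{eq_estsmallestEV} is the real work and the main obstacle. The clean route is to pass to the inverse operators $H^{(j)}:=(B^{(j)})^{-1}$, where the recursion becomes the DFP-type update $H^{(j+1)} = (I - s_jy_j^T/\rho_j)H^{(j)}(I - y_js_j^T/\rho_j) + s_js_j^T/\rho_j$ with $\rho_j:=y_j^Ts_j$. Taking norms gives $\norm{H^{(j+1)}}\leq \norm{I - s_jy_j^T/\rho_j}^2\norm{H^{(j)}} + \norm{s_j}^2/\rho_j$. Now $\norm{I - s_jy_j^T/\rho_j}\leq 1 + \norm{s_j}\,\norm{y_j}/\rho_j$, and by Cauchy--Schwarz together with the two curvature bounds one gets $\norm{s_j}\,\norm{y_j}/\rho_j\leq\sqrt{\kappa_1\kappa_2}$ (since $\rho_j\geq\norm{s_j}^2/\kappa_1$ and $\rho_j\geq\norm{y_j}^2/\kappa_2$ give $\norm{s_j}\norm{y_j}\leq\sqrt{\kappa_1\kappa_2}\,\rho_j$), so $\norm{I - s_jy_j^T/\rho_j}^2\leq(1+\sqrt{\kappa_1\kappa_2})^2\leq 5\max\{1,\kappa_1\kappa_2\}$, using $(1+t)^2\leq 5\max\{1,t^2\}$ for $t\geq 0$. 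Also $\norm{s_j}^2/\rho_j\leq\kappa_1$. Hence $\norm{H^{(j+1)}}\leq a\,\norm{H^{(j)}} + b$ with $a=5\max\{1,\kappa_1\kappa_2\}$ and $b=\kappa_1\leq\max\{1,\kappa_1\}$. Unrolling, $\norm{H^{(M)}}\leq a^M\norm{H^{(0)}} + b(a^{M-1}+\cdots+1)$. Bounding the geometric sum crudely by $M a^{M-1}$ is wasteful; instead I would bound $\norm{H^{(M)}}\leq a^M(\norm{H^{(0)}} + b\cdot M/a)$ or, more simply, absorb everything: since $a\geq 5$, one has $a^M\norm{H^{(0)}} + b\sum_{i<M}a^i \leq a^M\bigl(\norm{H^{(0)}} + \tfrac{b}{a-1}\bigr)\leq a^M(\norm{H^{(0)}}+ b)\leq a^M\cdot 2\max\{1,\norm{H^{(0)}}\}\max\{1,\kappa_1\}$. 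Comparing with the claimed bound, it remains to check that $a^M = 5^M\max\{1,(\kappa_1\kappa_2)^M\}\cdot(\text{const})$ fits inside $5^M\max\{1,\kappa_1^M,(\kappa_1\kappa_2)^M\}$; indeed $\max\{1,\kappa_1\kappa_2\}^M=\max\{1,(\kappa_1\kappa_2)^M\}$, and the extra factor $\max\{1,\kappa_1\}$ is dominated by $\max\{1,\kappa_1^M,(\kappa_1\kappa_2)^M\}$ when $M\geq 1$ while for $M=0$ the recursion is empty and $\norm{B^{-1}}=\norm{(B^{(0)})^{-1}}$ matches. The only delicate bookkeeping is to make the constant-chasing land exactly on $5^M\max\{1,\norm{(B^{(0)})^{-1}}\}\max\{1,\kappa_1^M,(\kappa_1\kappa_2)^M\}$ rather than something slightly larger; I would organize the induction so that the inductive hypothesis is precisely $\norm{H^{(j)}}\leq 5^j\max\{1,\norm{H^{(0)}}\}\max\{1,\kappa_1^j,(\kappa_1\kappa_2)^j\}$ and verify the step $j\to j+1$ directly from $\norm{H^{(j+1)}}\leq 5\max\{1,\kappa_1\kappa_2\}\norm{H^{(j)}}+\kappa_1$, which is where the factor $5$ and the product $\kappa_1\kappa_2$ enter, with $\kappa_1^{j+1}$ needed to absorb the additive $\kappa_1$ term.
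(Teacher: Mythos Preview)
Your proposal is correct and follows essentially the same route as the paper: prove self-adjointness and positive definiteness inductively, bound $\norm{B}$ by dropping the negative rank-one term in \cref{eq_defU}, and bound $\norm{B^{-1}}$ by passing to the inverse recursion $H_{j+1}=V_j^T H_j V_j + \rho_j s_j s_j^T$ and taking norms. The paper in fact only states this last step and \emph{omits} the constant-chasing (``similarly elementary''), so your write-up is more detailed than the published proof. Your key estimate $(1+\sqrt{\kappa_1\kappa_2})^2\leq 5\max\{1,\kappa_1\kappa_2\}$ is correct; to make the induction close exactly on the stated constant, it is cleanest to prove the single-step bound $\max\{1,\norm{H^{(j+1)}}\}\leq 5\max\{1,\kappa_1,\kappa_1\kappa_2\}\cdot\max\{1,\norm{H^{(j)}}\}$ directly (this follows from $(1+\sqrt{\kappa_1\kappa_2})^2\leq 4\max\{1,\kappa_1,\kappa_1\kappa_2\}$ and $\kappa_1\leq\max\{1,\kappa_1,\kappa_1\kappa_2\}\cdot\max\{1,\norm{H^{(j)}}\}$), after which iterating $M$ times gives \cref{eq_estsmallestEV} without the spurious factor~$2$ that your geometric-sum unrolling produces.
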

	
	\begin{proof}
		It is clear from the definition, cf. \cref{eq_defU}, that $\upd(s,y,C)$ is self-adjoint if $C\in\Lin(\CX)$ is self-adjoint and $y^T s\neq 0$. 
		Inductively, this implies that $B$ is self-adjoint.
		Concerning \cref{eq_estlargestEV} note that the \LBFGS~formula yields by induction over $j$ that for any $v$ with $\norm{v}=1$ and $0\leq j\leq M-1$ there holds
		\begin{equation*}
			v^T B_{j+1} v = v^T B_j v - \frac{(s_j^T B_j v)^2}{s_j B_js_j} + \frac{(y_j^T v)^2}{y_j^T s_j}
			\leq v^T B_j v + \frac{\norm{y_j}^2\norm{v}^2}{y_j^T s_j},
		\end{equation*}
		where $B_0:=B^{(0)}$ and $B=B_M$. Here, we have also used that each $B_j$ is positive definite; for $j=0$ this holds by assumption, while for $j>0$ this follows by rewriting $v=p+w$, where $p$ is the orthogonal projection of $v$ onto $s_j$ with respect to the scalar product $(v',v'')_j:=(v',B_j v'')$, and $w=v-p$.
		Hence, 
		\begin{equation*}
			\norm{B_{j+1}}\leq \norm{B_j} + \kappa_2
		\end{equation*}
		for all $0\leq j\leq M-1$, which implies \cref{eq_estlargestEV}.
		The proof of \cref{eq_estsmallestEV} uses 
		that the inverse $H:=B^{-1}$ equals $H_M$ in the recursion
		\begin{equation*}
			H_{j+1} = V_j^T H_{j} V_j + \rho_j s_j s_j^T, \qquad j=0,1,\ldots,M-1,
		\end{equation*}
		where $H_0:=(B^{(0)})^{-1}$, $V_j:=I-\rho_j y_j s_j^T$, and $\rho_j := (y_j^T s_j)^{-1}$; cf. \cite[(6.17)]{Nocedal2006}.	
		Since it is similarly elementary as the proof of \cref{eq_estlargestEV}, we omit it.
	\end{proof}	
	
	The following assumption ensures that Algorithm~\ref{alg_hybrid} is well-defined.
	
	\begin{assumption}\label{ass_basic}
		\phantom{to enforce linebreak}
		\begin{enumerate}
			\item[1)] The function $\CJ:\CX\rightarrow\R$ is continuously differentiable and bounded below. 
			\item[2)] The step size $\alpha_k$ in \ref{alg_hybrid} consistently satisfies the Armijo condition and is computed by backtracking, cf. \cref{eq_armijocond}, or it consistently satisfies the Wolfe--Powell conditions, cf. \cref{eq_WolfePowellcond}.
			\item[3)] The value $c_0=0$ is only chosen in Algorithm~\ref{alg_hybrid} if with this choice there holds $B_k\in\LinPDX$ for all $k$ (which is, for instance, the case if $S_k\in\LinPDX$ for all $k$).
		\end{enumerate}
	\end{assumption}
	
	\begin{lemma}\label{lem_welldef}
		If \cref{ass_basic} holds, then Algorithm~\ref{alg_hybrid} is well-defined and the sequence $(\CJ(x_k))$ is strictly monotonically decreasing and convergent. 
	\end{lemma}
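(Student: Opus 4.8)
The plan is to verify the three things that "well-defined" entails for Algorithm~\ref{alg_hybrid}: (i) each iteration produces a usable descent direction $d_k$, (ii) the line search in \cref{line_nols} terminates with $\alpha_k>0$, and (iii) the remaining assignments (in particular the choice of $\tau_{k+1}$ in \cref{line_choicetauk1}--\cref{line_choicetauk2}) are always executable; afterwards monotonicity and convergence of $(\CJ(x_k))$ follow quickly. For (i) I would argue by induction on $k$ that $B_k\in\LinPDX$, hence $H_k=B_k^{-1}$ exists and is symmetric positive definite, so $d_k=-H_k\nabla\CJ(x_k)$ satisfies $\nabla\CJ(x_k)^T d_k = -\nabla\CJ(x_k)^T H_k\nabla\CJ(x_k)<0$ whenever $\nabla\CJ(x_k)\neq 0$ (and if $\nabla\CJ(x_k)=0$ the algorithm has already terminated via \cref{line_term} at the previous step, or one notes the cost cannot decrease further and treats this as a trivial terminating case). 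The positive definiteness of $B_k$ is where \cref{ass_basic}(3) enters: for $c_0>0$ the seed matrix $B_k^{(0)}=\tau_kI+S_k$ has $\tau_k\geq\omega_k^l>0$ by the construction in \cref{line_defomegakplusone} and \cref{line_choicetauk1}--\cref{line_choicetauk2}, while $S_k\succeq 0$, so $B_k^{(0)}\in\LinPDX$; for $c_0=0$ this is guaranteed directly by assumption. Then \cref{lem_generalresultonnormbounds} (applied with the stored pairs, which by the cautious update in \cref{line_acceptanceofysforstorage} satisfy $y_j^Ts_j>c_s\norm{s_j}^2>0$) yields $B_k\in\LinPDX$ and, incidentally, the finite bounds on $\norm{B_k}$ and $\norm{B_k^{-1}}$.

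For (ii), since $d_k$ is a descent direction at $x_k$ and $\CJ$ is continuously differentiable and bounded below by \cref{ass_basic}(1), the standard line-search theory applies: backtracking reaches \cref{eq_armijocond} after finitely many steps, and the Wolfe--Powell conditions \cref{eq_WolfePowellcond} are satisfiable as well (the usual argument using that $\CJ$ is bounded below along the ray so that the Wolfe interval is nonempty). This gives $\alpha_k>0$ with strict decrease $\CJ(x_{k+1})<\CJ(x_k)$ because $\sigma\in(0,1)$, $\alpha_k>0$ and $\nabla\CJ(x_k)^T d_k<0$ in \cref{eq_armijocond}. For (iii), once $x_{k+1}$, $s_k$, $y_k$ are formed, $z_k$ in \cref{line_zk} is well-defined, and the safeguards $\omega_{k+1}^l,\omega_{k+1}^u$ in \cref{line_defomegakplusone} satisfy $0<\omega_{k+1}^l\leq\omega_{k+1}^u$ (when $c_0>0$; when $c_0=0$ one checks $\omega_{k+1}^l=0\leq\omega_{k+1}^u$ is still an admissible input to \cref{def:tau:B} and $\tau_{k+1}>0$ is forced by the case distinction, or simply $\tau_{k+1}$ stays positive because $\tau^\Bp,\tau^\GM,\tau^\Bz$ are positive when the relevant quotients are). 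The case split on the sign of $z_k^Ts_k$ exhausts all possibilities, and in each branch \cref{lem_relationbetweendifferenttaus} shows the interval $[\tau^\Bp,\tau^\Bz]$ resp. $[\tau^\Bp,\tau^\GM]$ is nonempty, so a valid $\tau_{k+1}$ exists. Here one also must confirm $\tau_{k+1}>0$ so that the next $B_{k+1}^{(0)}$ is again in $\LinPDX$, closing the induction; this holds since $\tau^\Bp=\proj(\rho/\norm{s}^2)$ with the projection onto $[\omega_{k+1}^l,\omega_{k+1}^u]$ and, in the branch $c_0=0$, the value fed in is $z_k^Ts_k/\norm{s_k}^2$ in the $\rho>0$ case, hence positive.

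Finally, monotonicity: the Armijo inequality \cref{eq_armijocond} (which holds in both line-search variants) gives $\CJ(x_{k+1})\leq\CJ(x_k)+\alpha_k\sigma\nabla\CJ(x_k)^Td_k<\CJ(x_k)$ as long as the algorithm has not terminated, so $(\CJ(x_k))$ is strictly decreasing; being bounded below by \cref{ass_basic}(1), it converges. The main obstacle, and the only part needing care rather than bookkeeping, is the induction step establishing $B_k\in\LinPDX$ in the corner case $c_0=0$: one must trace through \cref{line_choicetauk1}--\cref{line_choicetauk2} to see that $\tau_{k+1}$ cannot be forced to $0$ in a way that would destroy definiteness of $B_{k+1}$ unless \cref{ass_basic}(3) already guarantees $B_{k+1}\in\LinPDX$ independently of $\tau_{k+1}$; everything else is a direct appeal to \cref{lem_generalresultonnormbounds}, \cref{lem_relationbetweendifferenttaus}, and classical line-search results.
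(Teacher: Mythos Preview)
Your proposal is correct and follows essentially the same route as the paper's proof: positive definiteness of $B_k$ (via \cref{ass_basic}(3) in the case $c_0=0$, and via $\tau_k>0$, $S_k\succeq 0$ and \cref{lem_generalresultonnormbounds} otherwise), the descent property $\nabla\CJ(x_k)^T d_k=-d_k^T B_k d_k<0$, existence of Armijo/Wolfe step sizes from $\CJ$ being $C^1$ and bounded below, nonemptiness of the $\tau$-intervals from \cref{lem_relationbetweendifferenttaus} and $\omega_{k+1}^l\leq\omega_{k+1}^u$, and finally strict decrease and convergence of $(\CJ(x_k))$. You supply more detail than the paper---in particular, you make the induction on $k$ explicit and spell out the $c_0=0$ versus $c_0>0$ distinction for the seed matrix---but the structure and the key ingredients are identical.
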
		
	
	\begin{proof}
		Since $B_k\in\LinPDX$ for all $k$, $H_k$ is well-defined, so $d_k$ exists. 
		This also yields $\nabla\CJ(x_k)^T d_k = -d_k^T B_k d_k < 0$, hence $d_k$ is a descent direction for all $k$. 
		In turn, both Armijo step sizes and Wolfe--Powell step sizes exist for all $k$, the latter because 
		$\CJ$ is bounded below. 
		Due to \cref{lem_relationbetweendifferenttaus} and $\omega_{k+1}^l\leq\omega_{k+1}^u$ the interval $[\tau^\Bp,\tau^\Bz]$, respectively, $[\tau^\Bp,\tau^\GM]$
		is nonempty for all $k$. 
		Together, it follows that the entire algorithm is well-defined.
		The step sizes ensure $\CJ(x_{k+1})<\CJ(x_k)$ for all $k$, 
		so $(\CJ(x_k))$ is strictly monotonically decreasing. As $\CJ$ is bounded below, 
		$(\CJ(x_k))$ converges.
	\end{proof}	
	
	\subsection{Global convergence of Algorithm~\ref{alg_hybrid}}\label{sec_globconv}
	
	We associate to $x_0$ in Algorithm~\ref{alg_hybrid} the level set 
	\begin{equation*}
		\Omega:=\Bigl\{x\in\CX: \; \CJ(x)\leq\CJ(x_0)\Bigr\}.
	\end{equation*}
	Moreover, for $\delta>0$ we define the following neighborhood of $\Omega$,
	\begin{equation*}
		\Omega_\delta:=\Omega+\Ballop{\delta}(0)=\Bigl\{x\in\CX: \enspace \exists \hat x\in\Omega: \,\norm{x-\hat x}<\delta\Bigr\}.
	\end{equation*}
	
	We establish the global convergence of Algorithm~\ref{alg_hybrid} under the following assumption.
	
	\begin{assumption}\label{ass_globconv}	
		\phantom{to create linebreak}
		\begin{enumerate}
			\item[1)] The function $\CJ:\CX\rightarrow\R$ is continuously differentiable and bounded below.
			\item[2)] The gradient of $\CJ$ is Lipschitz continuous in $\Omega$ with constant $L>0$, i.e., there holds
			$\norm{\nabla\CJ(x)-\nabla\CJ(\hat x)}\leq L\norm{x-\hat x}$ for all $x,\hat x\in\Omega$.
			\item[3)] The sequence $(\norm{S_k})$ in Algorithm~\ref{alg_hybrid} is bounded. 
			\item[4)] The step size $\alpha_k$ in Algorithm~\ref{alg_hybrid} consistently satisfies the Armijo condition and is computed by backtracking, cf. \cref{eq_armijocond}, or it consistently satisfies the Wolfe--Powell conditions, cf. \cref{eq_WolfePowellcond}.
			If the Armijo condition with backtracking is used for step size selection, we suppose in addition that there is $\delta>0$ such that $\CJ$ or $\nabla\CJ$ is uniformly continuous in $\Omega_\delta$. 
			\item[5)] The value $c_0=0$ is only chosen in Algorithm~\ref{alg_hybrid} if 
			 with this choice there holds $\sup_k\norm{(B_k^{(0)})^{-1}}<\infty$
			 (which is, for instance, the case if $(\norm{S_k^{-1}})$ is bounded). 
			\item[6)] $C_0=\infty$ is only chosen in Algorithm~\ref{alg_hybrid} if any of the following holds:
			\begin{itemize}
				\item The interval $[\tau^\Bp,\tau^\Bz]$ in \cref{line_choicetauk1} of Algorithm~\ref{alg_hybrid} is replaced by  $[\tau^\Bp,\tau^\GM]$.
				\item $\CJ$ is twice continuously differentiable, $\overline{G_k}:=\overline{\nabla^2\CJ_k}-S_{k+1}$ is symmetric positive semi-definite for all $k$, and 
				$(\norm{\overline{G_k}})$ is bounded.
				For quadratic $\CS$ and $S_k=\nabla^2\CS(x_k)$ for all $k$, we can also replace
				$\overline{G_k}$ in the preceding sentence by 
				$\overline{\nabla^2\CD_k}:=\int_0^1 \nabla^2\CD(x_k+t s_k)\,\mathrm{d}t$.
			\end{itemize}
		\end{enumerate}		
	\end{assumption}
	
	\begin{remark}
		The sequence $(\norm{S_k})$ is for instance bounded if we select $S_k=\nabla^2\CS(x_k)$ for all $k$, $(x_k)$ is bounded and $\nabla^2\CS$ is Hölder continuous in $\Omega$. The sequence $(\norm{(B_k^{(0)})^{-1}})$ is for instance bounded if
		we select $S_k=\nabla^2\CS(x_k)$ for all $k$ and the regularizer $\CS$ is strongly convex.
		Note that $c_0=0$ implies $\tau_{\min}=\omega_{k+1}^l=0$ for all $k$, while $C_0=\infty$ implies $\tau_{\max}=\omega_{k+1}^u=\infty$ for all $k$. That is, for $c_0=0$, respectively, $C_0=\infty$, the lower safeguard $\tau_{\min}$ is zero, resp., the upper safeguard $\tau_{\max}$ is irrelevant when selecting $\tau_{k+1}$, just as in classical \LBFGS.
		Observe in this context that if $\CS\equiv 0$ and $\CD$ is a strongly convex $C^2$ function with Lipschitz continuous gradient in $\Omega$, then 
		\cref{ass_globconv} holds with $c_0=0$ and $C_0=\infty$ and we can use $S_k=0$ for all $k$. In this case we recover classical~\LBFGS, including common choices of $\tau_{k+1}$ (for the latter note that $z_k=y_k$ for all $k$ and that $\proj$ in \cref{def:tau:B} disappears).
		Also note that \cref{ass_globconv} implies \cref{ass_basic}, with 3) following from \cref{lem_generalresultonnormbounds}.
	\end{remark}
	
	We now prove fundamental estimates for $\norm{B_k}$, $\norm{B_k^{-1}}$, $\norm{\nabla\CJ(x_k)}$ and $\norm{d_k}$.
	
	\begin{lemma}\label{lem_BkandBkinversebounded}
		Let \cref{ass_globconv} hold and let $(x_k)$ be generated by Algorithm~\ref{alg_hybrid}. 
		Then there is a constant $C>0$ such that
		\begin{equation*}
			\norm{B_k}+\norm{B_k^{-1}}\leq C \max\bigl\{1,\norm{\nabla\CJ(x_k)}^{-c_2}\bigr\}
		\end{equation*}
		is satisfied for all $k\geq 0$, where $c_2$ is the constant from Algorithm~\ref{alg_hybrid}. 
	\end{lemma}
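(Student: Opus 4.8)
The plan is to combine the abstract norm bounds of \cref{lem_generalresultonnormbounds} with the specific structure of the seed matrix $B_k^{(0)}=\tau_k I+S_k$ and the safeguards $\omega_k^l,\omega_k^u$ that control $\tau_k$. First I would fix $k$ and distinguish the two cases $k<\ell$ and $k\geq\ell$; in either case the operator $B_k$ is produced from $B_k^{(0)}$ by at most $\ell$ \LBFGS{} updates using stored pairs $(s_j,y_j)$. By the acceptance test in \cref{line_acceptanceofysforstorage} every stored pair satisfies $y_j^T s_j>c_s\norm{s_j}^2$, which gives the first hypothesis of \cref{lem_generalresultonnormbounds} with $\kappa_1=1/c_s$. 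For the second hypothesis, $y_j^T s_j/\norm{y_j}^2\geq 1/\kappa_2$, I would invoke the Lipschitz continuity of $\nabla\CJ$ on $\Omega$ (\cref{ass_globconv} 2)): since the line search is monotone, all iterates lie in $\Omega$, so $\norm{y_j}\leq L\norm{s_j}$ and hence $y_j^T s_j/\norm{y_j}^2\geq (y_j^T s_j/\norm{s_j}^2)\cdot(\norm{s_j}^2/\norm{y_j}^2)\geq c_s/L^2$, i.e. $\kappa_2=L^2/c_s$. Thus $\kappa_1,\kappa_2$ are absolute constants independent of $k$.

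Next I would bound $\norm{B_k^{(0)}}$ and $\norm{(B_k^{(0)})^{-1}}$. For the upper bound, $\norm{B_k^{(0)}}=\norm{\tau_k I+S_k}\leq\tau_k+\norm{S_k}\leq\omega_k^u+\sup_j\norm{S_j}$, and by definition $\omega_k^u=\max\{C_0,(c_1\norm{\nabla\CJ(x_k)}^{c_2})^{-1}\}\leq C_0+(c_1)^{-1}\max\{1,\norm{\nabla\CJ(x_k)}^{-c_2}\}$ (when $C_0<\infty$; the case $C_0=\infty$ is handled separately via \cref{ass_globconv} 6), where one replaces the seed-matrix bound by the average-Hessian bound as in \cref{lem:tau}). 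Since $(\norm{S_k})$ is bounded (\cref{ass_globconv} 3)), this yields $\norm{B_k^{(0)}}\leq C'\max\{1,\norm{\nabla\CJ(x_k)}^{-c_2}\}$. For the lower bound, $B_k^{(0)}\succeq\tau_k I\succeq\omega_k^l I$ when $c_0>0$, so $\norm{(B_k^{(0)})^{-1}}\leq (\omega_k^l)^{-1}\leq c_0^{-1}+(c_1)^{-1}\max\{1,\norm{\nabla\CJ(x_k)}^{-c_2}\}\cdot$ (after noting $\omega_k^l=\min\{c_0,c_1\norm{\nabla\CJ(x_k)}^{c_2}\}$, so $(\omega_k^l)^{-1}=\max\{c_0^{-1},(c_1)^{-1}\norm{\nabla\CJ(x_k)}^{-c_2}\}$); when $c_0=0$ one uses \cref{ass_globconv} 5) which directly assumes $\sup_k\norm{(B_k^{(0)})^{-1}}<\infty$, and the $S_k\succ0$ part of that hypothesis guarantees positive definiteness of $B_k^{(0)}$ even with $\tau_k$ possibly zero.

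Finally I would feed these into \cref{eq_estlargestEV} and \cref{eq_estsmallestEV} with $M\leq\ell$ fixed: $\norm{B_k}\leq\norm{B_k^{(0)}}+\ell\kappa_2$ and $\norm{B_k^{-1}}\leq 5^\ell\max\{1,\norm{(B_k^{(0)})^{-1}}\}\max\{1,\kappa_1^\ell,(\kappa_1\kappa_2)^\ell\}$. Both right-hand sides are bounded by a fixed constant times $\max\{1,\norm{\nabla\CJ(x_k)}^{-c_2}\}$, and adding the two estimates produces the claimed inequality with a suitable $C>0$. The one genuinely delicate point is the case $c_0=0$ together with $C_0=\infty$, where neither safeguard is active: then one cannot bound $\norm{B_k^{(0)}}$ and $\norm{(B_k^{(0)})^{-1}}$ from the algorithmic quantities alone and must instead use \cref{ass_globconv} 5)–6) to transfer the bounds to the average Hessian $\overline{\nabla^2\CJ_k}$ (or $\overline{\nabla^2\CD_k}$ in the quadratic-regularizer case), exactly via the mechanism of \cref{lem:tau}; I expect verifying that this substitution is legitimate in all the sub-cases of 6) — in particular checking that the chosen $\tau_{k+1}$ still lands in an interval on which the relevant operator bound holds — to be the main obstacle, though it is bookkeeping rather than a conceptual difficulty.
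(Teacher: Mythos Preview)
Your proposal is correct and follows essentially the same route as the paper: apply \cref{lem_generalresultonnormbounds} with $\kappa_1=1/c_s$ and $\kappa_2=L^2/c_s$ (via the cautious acceptance test and Lipschitz continuity of $\nabla\CJ$ on $\Omega$), then bound $\norm{B_k^{(0)}}$ and $\norm{(B_k^{(0)})^{-1}}$ through the safeguards $\omega_k^u,\omega_k^l$, treating the degenerate cases $C_0=\infty$ and $c_0=0$ separately via \cref{ass_globconv}~5)--6). One small correction: for $C_0=\infty$ the paper does not invoke \cref{lem:tau} but instead bounds $\tau^\GM$ directly by $L+\sup_j\norm{S_j}$ (first bullet of 6)) and bounds $\tau^\Bz$ by a Rayleigh-quotient argument using $z_{k-1}=\overline{G_{k-1}}s_{k-1}$ with $\overline{G_{k-1}}=\overline{\nabla^2\CJ_{k-1}}-S_k$ (second bullet), so the relevant operator is $\overline{G_{k-1}}$ rather than $\overline{\nabla^2\CJ_{k-1}}$ itself.
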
	
	
	\begin{proof}
		In view of \cref{eq_estlargestEV}, respectively, \cref{eq_estsmallestEV} and the acceptance criterion $y_k^T s_k>c_s \norm{s_k}^2$ in \cref{line_acceptanceofysforstorage} of Algorithm~\ref{alg_hybrid}
		we have 
		\begin{equation*}
			\norm{B_k}\leq\norm{B_k^{(0)}} + \frac{L^2\ell}{c_s}
		\end{equation*}
		and
		\begin{equation}\label{eq_estBkinv}
			\norm{B_k^{-1}}\leq 5^\ell\max\bigl\{1,\norm{(B_k^{(0)})^{-1}}\bigr\}\max\bigl\{1,c_s^{-\ell},\left(L/c_s\right)^{2\ell}\bigr\}.
		\end{equation}
		It remains to estimate $\norm{B_k^{(0)}}$ and $\norm{(B_k^{(0)})^{-1}}$.
		
		\textbf{An estimate for $\norm{B_k^{(0)}}$}\\
		Since $\norm{B_k^{(0)}}=\tau_k + \norm{S_k} \leq \omega_k^u + \norm{S_k}$ and since $C_S:=\sup_k\norm{S_k}<\infty$ by assumption, the definition of $\omega_k^u$ in Algorithm~\ref{alg_hybrid} implies 
		$\norm{B_k}\leq C\max\{1,\norm{\nabla\CJ(x_k)}^{-c_2}\}$ if $C_0<\infty$. If $C_0=\infty$, then there are two cases, cf. \cref{ass_globconv}~6).
		In the first case, all $\tau_k$ are restricted to $[\tau^\Bp,\tau^\GM]$, so
		it suffices to show that $\tau^\GM$ is bounded from above independently of $k$.
		Indeed, we have
		\begin{equation*}
			\tau^{\GM}\leq\frac{\norm{z_{k-1}}}{\norm{s_{k-1}}}
			\leq \frac{\norm{y_{k-1}} + \norm{S_k s_{k-1}}}{\norm{s_{k-1}}}
			\leq L+\norm{S_k}\leq L+C_S.
		\end{equation*}
		In the second case and for $\rho_{k-1}\leq 0$ we have $\tau_k\leq\tau^\GM$, so we can reuse the preceding estimate.
		For $\rho_{k-1}>0$ we show that $\tau^\Bz$ is bounded from above independently of $k$, which by \cref{lem_relationbetweendifferenttaus} and $\tau_k\leq\tau^\Bz$ implies that $\tau_k$ is bounded from above for $\rho_{k-1}>0$, too.
		Using $z_{k-1}=\overline{G_{k-1}} s_{k-1}$ we find
		\begin{equation*}
			\tau^{\Bz}=\frac{\norm{z_{k-1}}^2}{\rho_{k-1}}
			= \frac{z_{k-1}^T z_{k-1}}{z_{k-1}^T s_{k-1}}
			= \frac{s_{k-1}^T \overline{G_{k-1}}^2 s_{k-1}}{s_{k-1}^T \overline{G_{k-1}} s_{k-1}}
			\leq \norm{\overline{G_{k-1}}},
		\end{equation*}
		so the boundedness of $(\tau_k)$ follows by assumption.\\
		For later reference we note that 
		\begin{equation}\label{eq_Bkbounded}
			\text{if one of the statements in \cref{ass_globconv}~6) holds, then $\sup_k\,\norm{B_k}<\infty$.} 
		\end{equation}
		\textbf{An estimate for $\norm{(B_k^{(0)})^{-1}}$}\\		
		If $c_0=0$, then $(\norm{(B_k^{(0)})^{-1}})$ is bounded by assumption.
		It remains to consider the case $c_0>0$, where $\omega_k^l>0$ for all $k$. 
		For every $v$ with $\norm{v}=1$ we have $v^T B_k^{(0)} v = \tau_k + v^T S_k v\geq \tau_k$, which yields
		$\norm{(B_k^{(0)})^{-1}}\leq 1/\tau_k \leq 1/\omega_k^l$.
		Using the definition of $\omega_k^l$ and inserting into \cref{eq_estBkinv} yields the desired estimate
		$\norm{B_k^{-1}}\leq C\max\{1,\norm{\nabla\CJ(x_k)}^{-c_2}\}$. 
	\end{proof}	
	
	\begin{corollary}\label{cor_BkandBkinversebounded}
		Let \cref{ass_globconv} hold and let $(x_k)$ be generated by Algorithm~\ref{alg_hybrid}. 
		Then there are constants $c,C>0$ such that
		\begin{equation}\label{eq_boundonlengthofgradientsandsteps}
			c\cdot\min\Bigl\{1,\norm{\nabla\CJ(x_k)}^{c_2}\Bigr\}\leq \frac{\norm{d_k}}{\norm{\nabla\CJ(x_k)}}\leq C\cdot\max\Bigl\{1,\norm{\nabla\CJ(x_k)}^{-c_2}\Bigr\}
		\end{equation}
		as well as 
		\begin{equation*}
			\frac{\lvert\nabla\CJ(x_k)^T d_k\rvert}{\norm{d_k}}  \geq C^{-1}\norm{\nabla\CJ(x_k)}\min\Bigl\{1,\norm{\nabla\CJ(x_k)}^{2c_2}\Bigr\}
		\end{equation*}
		for all $k\geq 0$. 
	\end{corollary}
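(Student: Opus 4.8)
The plan is to derive both parts of \cref{cor_BkandBkinversebounded} directly from the identity $d_k=-H_k\nabla\CJ(x_k)=-B_k^{-1}\nabla\CJ(x_k)$ together with the two-sided bound on $\norm{B_k}$ and $\norm{B_k^{-1}}$ provided by \cref{lem_BkandBkinversebounded}. Throughout I may assume $\nabla\CJ(x_k)\neq 0$, since for $\nabla\CJ(x_k)=0$ we have $d_k=0$ and the quotients in the statement are not meaningful.

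First I would record the elementary sandwich
\begin{equation*}
\frac{1}{\norm{B_k}}\leq\frac{\norm{d_k}}{\norm{\nabla\CJ(x_k)}}\leq\norm{B_k^{-1}},
\end{equation*}
where the right inequality follows from $\norm{d_k}=\norm{B_k^{-1}\nabla\CJ(x_k)}\leq\norm{B_k^{-1}}\,\norm{\nabla\CJ(x_k)}$ and the left one from $\norm{\nabla\CJ(x_k)}=\norm{B_k d_k}\leq\norm{B_k}\,\norm{d_k}$. Since \cref{lem_BkandBkinversebounded} gives a constant $C>0$ with $\norm{B_k}\leq C\max\{1,\norm{\nabla\CJ(x_k)}^{-c_2}\}$ and $\norm{B_k^{-1}}\leq C\max\{1,\norm{\nabla\CJ(x_k)}^{-c_2}\}$ for all $k$, the right half of the sandwich yields the upper bound in \cref{eq_boundonlengthofgradientsandsteps}, while $\norm{B_k}^{-1}\geq C^{-1}\min\{1,\norm{\nabla\CJ(x_k)}^{c_2}\}$ combined with the left half yields the lower bound, with $c:=C^{-1}$.

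For the second estimate I would use that $B_k\in\LinPDX$ implies $B_k^{-1}\in\LinPDX$ with spectrum contained in $[\norm{B_k}^{-1},\norm{B_k^{-1}}]$, so that
\begin{equation*}
\lvert\nabla\CJ(x_k)^T d_k\rvert=\nabla\CJ(x_k)^T B_k^{-1}\nabla\CJ(x_k)\geq\norm{B_k}^{-1}\norm{\nabla\CJ(x_k)}^2.
\end{equation*}
Dividing by $\norm{d_k}\leq\norm{B_k^{-1}}\,\norm{\nabla\CJ(x_k)}$ gives $\lvert\nabla\CJ(x_k)^T d_k\rvert/\norm{d_k}\geq\norm{\nabla\CJ(x_k)}/(\norm{B_k}\,\norm{B_k^{-1}})$, and bounding the product $\norm{B_k}\,\norm{B_k^{-1}}$ by $C^2\max\{1,\norm{\nabla\CJ(x_k)}^{-2c_2}\}$ via \cref{lem_BkandBkinversebounded} produces exactly the claimed inequality after enlarging $C$ (for instance replacing $C$ by $C^2$). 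The whole argument is routine bookkeeping with operator norms and the $\min$/$\max$ notation; I do not expect any genuine obstacle, the only thing to watch being consistency of the powers of $\norm{\nabla\CJ(x_k)}$ and of the constants across the three inequalities.
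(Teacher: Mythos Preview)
Your proposal is correct and follows essentially the same route as the paper: both derive \cref{eq_boundonlengthofgradientsandsteps} from the sandwich $\norm{B_k}^{-1}\leq\norm{d_k}/\norm{\nabla\CJ(x_k)}\leq\norm{B_k^{-1}}$ together with \cref{lem_BkandBkinversebounded}, and both obtain the second estimate by combining the quadratic form $-\nabla\CJ(x_k)^T d_k=d_k^T B_k d_k=\nabla\CJ(x_k)^T B_k^{-1}\nabla\CJ(x_k)$ with these same operator-norm bounds. The only cosmetic difference is that the paper writes $d_k^T B_k d_k\geq\norm{d_k}^2/\norm{B_k^{-1}}$ and then invokes the lower bound on $\norm{d_k}$, whereas you write $\nabla\CJ(x_k)^T B_k^{-1}\nabla\CJ(x_k)\geq\norm{\nabla\CJ(x_k)}^2/\norm{B_k}$ and divide by the upper bound on $\norm{d_k}$; these are dual versions of the same computation.
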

	
	\begin{proof}
		Since $B_k d_k=-\nabla\CJ(x_k)$, the estimates \cref{eq_boundonlengthofgradientsandsteps} readily follow from those for $\norm{B_k}$ and $\norm{B_k^{-1}}$.
		Concerning the final inequality we have
		\begin{equation*}
			\begin{split}
				-\nabla\CJ(x_k)^T d_k = d_k^T B_k d_k 
				&\geq \norm{d_k}^2\norm{B_k^{-1}}^{-1}\\
				&\geq c\norm{d_k} \norm{\nabla\CJ(x_k)}\min\Bigl\{1,\norm{\nabla\CJ(x_k)}^{2c_2}\Bigr\},
			\end{split}
		\end{equation*}
		where we used the estimate for $\norm{B_k^{-1}}$ and the first inequality of \cref{eq_boundonlengthofgradientsandsteps}.
	\end{proof}	
	
	As a main result we now prove that Algorithm~\ref{alg_hybrid} is globally convergent in the sense that $\lim_{k\to\infty}\,\norm{\nabla\CJ(x_k)} = 0$, without convexity of the objective.
	Note that this result also applies to the unregularized case $\CS\equiv 0$. 
	In that case, \ref{alg_hybrid} reduces to an \emph{unstructured} \LBFGS-type method.
	Even for such methods there are only few works available that show $\lim_{k\to\infty}\,\norm{\nabla\CJ(x_k)} = 0$ for non-convex objective functions,
	cf. the discussion in \cref{sec_introconvana}.
	In addition, the following result is the first to demonstrate global convergence of a \emph{structured} \LBFGS-type method.
		
	\begin{theorem}\label{thm_globconv}
		Let \cref{ass_globconv} hold. Then:
		\begin{enumerate}
			\item[1)] If Algorithm~\ref{alg_hybrid} is applied with $\tl=0$, then it either terminates after finitely many iterations 
			with an $x_k$ that satisfies $\nabla\CJ(x_k)=0$ or it generates a sequence $(x_k)$ such that 
			\begin{equation}\label{eq_liminfiszero}
				\lim_{k\to\infty}\,\norm{\nabla\CJ(x_k)} = 0.
			\end{equation}
			In particular, every cluster point of $(x_k)$ is stationary.
			\item[2)] If Algorithm~\ref{alg_hybrid} is applied with $\tl>0$, then it terminates after finitely many iterations
			with an $x_k$ that satisfies $\norm{\nabla\CJ(x_k)}\leq\tl$. 
		\end{enumerate}
	\end{theorem}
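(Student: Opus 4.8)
The plan is to reduce the assertion to a Zoutendijk-type estimate combined with the bounds of \cref{cor_BkandBkinversebounded}, treating the two step-size regimes of \cref{ass_globconv}~4) separately. First I would dispose of the degenerate cases: whenever the test in \cref{line_term} triggers we have $\norm{\nabla\CJ(x_{k+1})}\le\tl$, which for $\tl=0$ means $\nabla\CJ(x_{k+1})=0$; and if $\nabla\CJ(x_0)=0$ already, the first line search returns $x_1=x_0$ and the algorithm stops there. Hence in both parts we may assume that Algorithm~\ref{alg_hybrid} produces an infinite sequence $(x_k)$ with $\nabla\CJ(x_k)\neq 0$ for all $k$, so that $d_k\neq 0$ because $H_k$ is invertible and the quotients in \cref{cor_BkandBkinversebounded} make sense. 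By \cref{lem_welldef} the values $\CJ(x_k)$ are strictly decreasing and convergent, and since the Armijo inequality \cref{eq_armijocond} holds in both step-size strategies, summing $\CJ(x_k)-\CJ(x_{k+1})\ge\sigma\alpha_k\lvert\nabla\CJ(x_k)^Td_k\rvert$ over $k$ gives $\sum_{k\ge 0}\alpha_k\lvert\nabla\CJ(x_k)^Td_k\rvert<\infty$, and in particular $\alpha_k\lvert\nabla\CJ(x_k)^Td_k\rvert\to 0$.

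In the Wolfe--Powell case I would run the classical Zoutendijk argument, carried out in the Hilbert space $\CX$. Since $(\CJ(x_k))$ decreases, all iterates lie in $\Omega$, so the Lipschitz estimate of \cref{ass_globconv}~2) applies directly to the pair $x_k,x_{k+1}$, and combining it with the curvature condition in \cref{eq_WolfePowellcond} yields a lower bound of the form $\alpha_k\ge(1-\eta)L^{-1}\lvert\nabla\CJ(x_k)^Td_k\rvert/\norm{d_k}^2$. Inserting this into the summability just obtained shows $(\nabla\CJ(x_k)^Td_k)^2/\norm{d_k}^2\to 0$, hence $\lvert\nabla\CJ(x_k)^Td_k\rvert/\norm{d_k}\to 0$. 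Now the last inequality of \cref{cor_BkandBkinversebounded}, namely $\lvert\nabla\CJ(x_k)^Td_k\rvert/\norm{d_k}\ge C^{-1}\norm{\nabla\CJ(x_k)}\min\{1,\norm{\nabla\CJ(x_k)}^{2c_2}\}$, together with the fact that $t\mapsto t\min\{1,t^{2c_2}\}$ is continuous, nondecreasing and vanishes only at $t=0$, forces $\norm{\nabla\CJ(x_k)}\to 0$, which is \cref{eq_liminfiszero}.

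In the Armijo-with-backtracking case I would argue by contradiction. Suppose $\norm{\nabla\CJ(x_k)}\not\to 0$ and fix $\eps>0$ together with a subsequence $(k_j)$ satisfying $\norm{\nabla\CJ(x_{k_j})}\ge\eps$. By \cref{cor_BkandBkinversebounded}, $\norm{d_{k_j}}$ is then bounded away from $0$ and $\lvert\nabla\CJ(x_{k_j})^Td_{k_j}\rvert/\norm{d_{k_j}}\ge\eps'$ for some $\eps'>0$; combined with $\alpha_{k_j}\lvert\nabla\CJ(x_{k_j})^Td_{k_j}\rvert\to 0$ this forces $\alpha_{k_j}\norm{d_{k_j}}\to 0$ and hence $\alpha_{k_j}\to 0$, so for large $j$ we have $\alpha_{k_j}<1$ and the Armijo test \cref{eq_armijocond} must have failed for the probe step $\alpha_{k_j}/\beta$ preceding $\alpha_{k_j}$ in the backtracking sequence. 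Since $(\alpha_{k_j}/\beta)\norm{d_{k_j}}\to 0$, the whole segment $\{x_{k_j}+t(\alpha_{k_j}/\beta)d_{k_j}:t\in[0,1]\}$ lies within distance $\delta$ of $x_{k_j}\in\Omega$, hence in $\Omega_\delta$, once $j$ is large. A mean-value argument on this segment then rewrites the Armijo failure as a lower bound $(1-\sigma)\lvert\nabla\CJ(x_{k_j})^Td_{k_j}\rvert$ for $\int_0^1[\nabla\CJ(x_{k_j}+t(\alpha_{k_j}/\beta)d_{k_j})-\nabla\CJ(x_{k_j})]^Td_{k_j}\,\mathrm{d}t$, and dividing by $\norm{d_{k_j}}$ and invoking uniform continuity of $\nabla\CJ$ on $\Omega_\delta$ — or the analogous estimate exploiting uniform continuity of $\CJ$ on $\Omega_\delta$ in the other form of \cref{ass_globconv}~4) — produces a left-hand side tending to $0$, contradicting the bound $\ge(1-\sigma)\eps'$. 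Hence $\norm{\nabla\CJ(x_k)}\to 0$ in this case as well. I expect this case to be the main obstacle: making the usual finite-dimensional backtracking argument work when the iterates need not be bounded is exactly what the uniform-continuity hypothesis on $\Omega_\delta$ is for, one must verify that the backtracking probe points actually stay in $\Omega_\delta$, and the two alternatives (uniform continuity of $\CJ$ versus of $\nabla\CJ$) have to be handled separately.

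Finally I would assemble the statements. The limit \cref{eq_liminfiszero} has been shown in both regimes; since $\nabla\CJ$ is continuous, every cluster point $\xopt$ of $(x_k)$ then satisfies $\nabla\CJ(\xopt)=\lim_j\nabla\CJ(x_{k_j})=0$, proving part~1). For part~2), if Algorithm~\ref{alg_hybrid} with $\tl>0$ did not terminate after finitely many iterations it would generate an infinite sequence with $\norm{\nabla\CJ(x_k)}>\tl$ for all $k$, contradicting $\norm{\nabla\CJ(x_k)}\to 0$; therefore it terminates, and by the rule in \cref{line_term} the output $x_k$ satisfies $\norm{\nabla\CJ(x_k)}\le\tl$.
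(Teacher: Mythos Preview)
Your plan matches the paper's proof almost exactly: the Wolfe--Powell branch via Zoutendijk plus \cref{cor_BkandBkinversebounded} is precisely what the paper does, and your Armijo contradiction argument (subsequence with $\norm{\nabla\CJ(x_{k_j})}\ge\eps$, forcing $\alpha_{k_j}\to 0$ and $\alpha_{k_j}\norm{d_{k_j}}\to 0$, then exploiting the Armijo failure at $\alpha_{k_j}/\beta$) is structurally the same, only with the integral form of the mean value theorem in place of the point form.

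There is, however, a genuine gap in how you propose to handle the alternative in \cref{ass_globconv}~4) where only $\CJ$ is uniformly continuous on $\Omega_\delta$. Your phrase ``the analogous estimate exploiting uniform continuity of $\CJ$'' does not work: after dividing by $\norm{d_{k_j}}$ your left-hand side is essentially $[\CJ(x_{k_j}+\hat\alpha_{k_j}d_{k_j})-\CJ(x_{k_j})-\hat\alpha_{k_j}\nabla\CJ(x_{k_j})^Td_{k_j}]/(\hat\alpha_{k_j}\norm{d_{k_j}})$, and a bare modulus of continuity for $\CJ$ only controls the first difference by $\omega(\hat\alpha_{k_j}\norm{d_{k_j}})$, which need not be $o(\hat\alpha_{k_j}\norm{d_{k_j}})$; the second term does not vanish at all. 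So no contradiction emerges from uniform continuity of $\CJ$ alone.

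The paper's trick is different and worth internalizing: uniform continuity of $\CJ$ on $\Omega_\delta$ is used only to show that the probe point $x_{k_j}+\theta_{k_j}\hat\alpha_{k_j}d_{k_j}$ eventually lies in $\Omega$ itself (because $\CJ$ at that point differs from $\CJ(x_{k_j})\le\CJ(x_1)<\CJ(x_0)$ by at most $\omega(\hat\alpha_{k_j}\norm{d_{k_j}})\to 0$). Once the probe point is in $\Omega$, the \emph{Lipschitz} bound on $\nabla\CJ$ from \cref{ass_globconv}~2) applies and yields $(1-\sigma)<L\hat\alpha_{k_j}\norm{B_{k_j}^{-1}}\to 0$, the desired contradiction. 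In short, the role of uniform continuity of $\CJ$ is geographic (getting back into $\Omega$), not analytic; the actual estimate still comes from Lipschitz continuity of $\nabla\CJ$ on $\Omega$. Everything else in your proposal is correct.
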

	
	\begin{proof}
		It is clear that 2) follows from 1). 
		Thus, we assume $\tl=0$ in the remainder of the proof.
		We recall from \cref{lem_welldef} that Algorithm~\ref{alg_hybrid} is well-defined, so if it does not terminate 
		with an $x_k$ that satisfies $\norm{\nabla\CJ(x_k)}=0$, it generates an infinite sequence $(x_k)$.
		If $(x_k)$ satisfies \cref{eq_liminfiszero}, then by continuity of $\CJ$ it follows that every cluster point $\xopt$ of $(x_k)$ satisfies $\nabla\CJ(\xopt)=0$. Hence, it only remains to establish \cref{eq_liminfiszero}. 
		We first prove this for Armijo step sizes.\\
		\textbf{A) Proof for Armijo step sizes}\\
		Suppose that \cref{eq_liminfiszero} were false. Then there exist $\epsilon'>0$ and a subsequence $(x_k)_K$ of $(x_k)$ such that 
		\begin{equation}\label{eq_contrassgradawayfromzero}
			\norm{\nabla\CJ(x_k)}\geq\epsilon' \qquad\forall k\in K.
		\end{equation}
		From $-\nabla\CJ(x_k)^T d_k = d_k^T B_k d_k$ and the Armijo condition we infer
		\begin{equation*}
			\sigma\sum_{k\in K}\alpha_k\frac{\norm{d_k}^2}{\norm{B_k^{-1}}}
			\leq -\sigma\sum_{k=0}^{\infty}\alpha_k\nabla\CJ(x_k)^T d_k
			\leq \sum_{k=0}^\infty \left[\CJ(x_{k})-\CJ(x_{k+1})\right] 
			<\infty.
		\end{equation*}
		From \cref{lem_BkandBkinversebounded} and \cref{eq_contrassgradawayfromzero} we obtain 
		$\sup_{k\in K}\norm{B_k^{-1}}<\infty$, which yields $\sum_{k\in K}\alpha_k\norm{d_k}^2<\infty$.
		From the first inequality in \cref{eq_boundonlengthofgradientsandsteps} and \cref{eq_contrassgradawayfromzero} we infer that 
		\begin{equation}\label{eq_dkboundedawayfromzero}
			\exists c>0: \quad \norm{d_k}\geq c \quad\forall k\in K.
		\end{equation}
		Hence, $\sum_{k\in K}\alpha_k\norm{d_k}^2<\infty$ implies $\sum_{k\in K}\alpha_k<\infty$.
		By considering separately $k\in K_1:=\{k\in K:\norm{d_k}\leq 1\}$ and $k\in K_2:=K\setminus K_1$,
		we deduce from $\sum_{k\in K}\alpha_k\norm{d_k}^2<\infty$ and $\sum_{k\in K}\alpha_k<\infty$
		that $\sum_{k\in K}\alpha_k\norm{d_k}<\infty$. 
		This shows 
		\begin{equation}\label{eq_yal}
			\lim_{K\ni k\to\infty}\alpha_k = \lim_{K\ni k\to\infty}\alpha_k\norm{d_k} = 0.
		\end{equation}
		In particular, the Armijo condition \cref{eq_armijocond} is violated for $\hat\alpha_k:=\alpha_k\beta^{-1}$ for all $k\in K$ large enough. Therefore, 
		\begin{equation*}
			-\hat\alpha_k\sigma\nabla\CJ(x_k)^T d_k > \CJ(x_k)-\CJ(x_k+\hat\alpha_k d_k)
			= -\hat\alpha_k\nabla\CJ(x_k+\theta_k\hat\alpha_k d_k)^T d_k
		\end{equation*}
		for all these $k$ and $\theta_k\in(0,1)$. Multiplying by $-\hat\alpha_k$ we obtain 
		\begin{equation*}
			(\sigma-1)\nabla\CJ(x_k)^T d_k + \left[\nabla\CJ(x_k) - \nabla\CJ(x_k+\theta_k\hat\alpha_k d_k)\right]^T d_k<0.
		\end{equation*}
		Due to $\sigma<1$ and $-\nabla\CJ(x_k)^T d_k = d_k^T B_k d_k$ there holds for all $k\in K$ sufficiently large 
		\begin{equation}\label{eq_qssi}
			(1-\sigma)\frac{\norm{d_k}^2}{\norm{B_k^{-1}}} < 
			\Bigl\lvert\left[\nabla\CJ(x_k) - \nabla\CJ(x_k+\theta_k\hat\alpha_k d_k)\right]^T d_k\Bigr\rvert.
		\end{equation}
		As $\theta_k\in(0,1)$, 
		it follows from \cref{eq_yal} that $\theta_k\hat\alpha_k\norm{d_k}\to 0$ for $K\ni k\to\infty$.
		For the remainder of the proof of A), we have to distinguish two cases.\\
		\textbf{Case I: The uniform continuity assumption holds for $\mathbf{\CJ}$}\\
		If $\CJ$ is uniformly continuous in $\Omega_\delta$, then we 
		conclude that for all $k\in K$ sufficiently large there holds
		$x_k+\theta_k\hat\alpha_k\norm{d_k}\in \Omega$, so the Lipschitz continuity of $\nabla\CJ$ in $\Omega$ entails
		\begin{equation*}
			\Bigl\lvert\left[\nabla\CJ(x_k) - \nabla\CJ(x_k+\theta_k\hat\alpha_k d_k)\right]^T d_k\Bigr\rvert
			< L\hat\alpha_k\norm{d_k}^2
		\end{equation*}
		for all these $k$. 
		Combining this with \cref{eq_qssi} we have
		\begin{equation*}
			1-\sigma< L\hat\alpha_k\norm{B_k^{-1}}.
		\end{equation*}
		For $K\ni k\to\infty$ we find, due to $\sup_{k\in K}\norm{B_k^{-1}}<\infty$ and \cref{eq_yal}, that $0<1-\sigma\leq 0$. This contradiction concludes the proof of Case~I for Armijo step sizes.\\
		\textbf{Case II: The uniform continuity assumption holds for $\mathbf{\nabla\CJ}$}\\
		If $\nabla\CJ$ is uniformly continuous in $\Omega_\delta$, we infer that for all $k\in K$ sufficiently large there holds
		\begin{equation*}
			\Bigl\lvert\left[\nabla\CJ(x_k) - \nabla\CJ(x_k+\theta_k\hat\alpha_k d_k)\right]^T d_k\Bigr\rvert
			\leq \omega_k\norm{d_k},
		\end{equation*}
		where $\omega_k\to 0$ for $K\ni k\to\infty$. 
		Combining this with \cref{eq_qssi} we have
		\begin{equation*}
			1-\sigma < \omega_k\frac{\norm{B_k^{-1}}}{\norm{d_k}}
		\end{equation*}
		for all $k\in K$ sufficiently large.
		For $K\ni k\to\infty$ we find, due to $\sup_{k\in K}\norm{B_k^{-1}}<\infty$, \cref{eq_dkboundedawayfromzero} and $\omega_k\to 0$, that $0<1-\sigma\leq 0$. This contradiction concludes the proof of Case~II.\\
		\textbf{B) Proof for Wolfe--Powell step sizes}\\
		From \cite[Thm.~3.2]{Nocedal2006} we know that under \cref{ass_globconv} the Wolfe--Powell step sizes
		satisfy 
		$\CJ(x_{k+1})\leq \CJ(x_k) - c\left(\frac{\nabla\CJ(x_k)^T d_k}{\norm{d_k}}\right)^2$ for all $k$ and a constant $c>0$. Hence,
		\begin{equation*}
			c\sum_{k=0}^{\infty}\left(\frac{\nabla\CJ(x_k)^T d_k}{\norm{d_k}}\right)^2 
			\leq \sum_{k=0}^\infty \left[\CJ(x_{k})-\CJ(x_{k+1})\right] 
			<\infty.
		\end{equation*}
		Due to \cref{cor_BkandBkinversebounded} there is a constant $\hat c>0$ such that 
		\begin{equation*}
			\hat c^2\sum_{k=0}^\infty \norm{\nabla\CJ(x_k)}^2\min\Bigl\{1,\norm{\nabla\CJ(x_k)}^{4c_2}\Bigr\} 
			\leq \sum_{k=0}^{\infty} \left(\frac{\nabla\CJ(x_k)^T d_k}{\norm{d_k}}\right)^2<\infty.
		\end{equation*}
		This implies that $\nabla\CJ(x_k)\to 0$ for $k\to\infty$.
	\end{proof}	
	
	\begin{remark}\label{rem_unifcontfindim}
			If $\CX$ is finite dimensional and $(x_k)$ is bounded, then the proof for Armijo step sizes can be modified in such a way that the uniform continuity in \cref{ass_globconv}~4) is not required.
	\end{remark}	
	
		\subsection{Rate of convergence of Algorithm~\ref{alg_hybrid}}\label{sec_linconv}
		
		The convergence of the classical \LBFGS~method is q-linear for the objective and r-linear 
		for the iterates under strong convexity of $\CJ$ in the level set, cf. \cite{Liu1989}. 
		For non-convex objectives, we are only aware of sublinear rates \cite{WMGL17,BJMT22}.		
		We prove two results. First we show under a \KL-type inequality, which is weaker than local strong convexity, that the objective converges q-linearly and the iterates and their gradients converge 
		r-linearly. In the second result we prove that the same type of convergence holds if there is a cluster point in whose neighborhood $\CJ$ is strongly convex. 
		Both results rely on the following assumption.
		
		\begin{assumption}\label{ass_linconv}
			\item[1)] \Cref{ass_globconv} holds.
			\item[2)] Algorithm~\ref{alg_hybrid} is applied with $\tl=0$ and 
			generates an infinite sequence $(x_k)$. 
			\item[3)] The sequences $(\norm{B_k})$ and $(\norm{B_k^{-1}})$ are bounded.
			\item[4)] If the Armijo condition with backtracking is used for step size selection in Algorithm~\ref{alg_hybrid}, we 
			there is $\delta>0$ such that $\CJ$ is uniformly continuous in $\Omega_\delta$ or $\nabla\CJ$ is Lipschitz continuous in $\Omega_\delta$. 
		\end{assumption}

		\begin{remark}
			The boundedness assumption~3) 
			is easy to satisfy in the setting of this paper. Specifically, we recall that according to \cref{eq_Bkbounded} the boundedness of $(\norm{B_k})$ is ensured if at least one of the two statements in \cref{ass_globconv}~6) holds. Notably, the first of those statements only limits the choice of the scaling parameter $\tau_k$. 
			The boundedness of $(\norm{B_k^{-1}})$ is, for instance, guaranteed if $(S_k)$ is chosen uniformly positive definite. 
			If $\CS$ is strongly convex, this holds for $S_k=\nabla^2\CS(x_k)$, but 
			more sophisticated choices may be available for the problem at hand. 
			If $\CS$ is convex, we can use $S_k=\nabla^2\CS(x_k)+w I$ with some small $w>0$.
			In the unstructured setting $\CS\equiv 0$ this reads $S_k=w I$. 
			Note that in all these considerations the data-fitting term $\CD$ in \cref{eq_SO} can be non-convex.
		\end{remark}
		
		\subsubsection{Linear convergence under a \KL-type inequality}
		
		In this subsection we show the linear convergence of Algorithm~\ref{alg_hybrid} based on a \KL-type inequality.
		To this end, we briefly review two existing variants of the \KL~inequality and relate them to the 
		one that we use. 		
		
		\KL-type inequalities have recently been used in the convergence analysis of many optimization methods, cf. e.g. \cite{AMA05,BDLM10,ABRS10,FGP15,KNS16,LP18,BCN19,BDL22,LMQ23}. 
		They are most often applied in nonsmooth settings, but for our purposes the smooth case suffices. 
		These inequalities exist in different forms, e.g., local and global versions. 
		An example for a global variant is contained in \cite[Appendix~G]{KNS20}. 
		There, the authors say that the \emph{\KL~inequality with exponent $\frac12$} holds iff
		there exists $\mu>0$ such that 
		\begin{equation}\label{eq_KLineqlit}
			\CJ(x)-\CJopt\leq \frac{1}{\mu}\norm{\nabla\CJ(x)}^2 \qquad\forall x\in\CX
		\end{equation}
		is satisfied, where $\CJopt:=\inf_{x\in\CX}\CJ(x)$ is assumed finite. 
		However, other authors refer to \cref{eq_KLineqlit} as the \emph{\Lo~inequality}, cf. \cite[Introduction]{BDLM10}. 
		The setting with exponent $\frac12$ is typically used to derive linear convergence, cf. e.g. \cite{KNS20}. 
		Since we work with exponent $\frac12$ only, we do not discuss other exponents here. 
		Observe that \cref{eq_KLineqlit} implies that all stationary points are global minimizers, but that it does not imply convexity. 
		Since any constant function satisfies \cref{eq_KLineqlit}, it is clear that this inequality allows 
		for non-isolated minimizers. A more sophisticated example is given by functions of the form $x\mapsto g(Ax)$ with arbitrary $A\in\R^{m\times n}$ and strongly convex $g\in C^{1,1}$, 
		cf. \cite[Appendix~2.3]{KNS16}. While these examples are all convex, 
		not every convex and smooth function satisfies a \KL-type inequality, cf. \cite[Theorem~36]{BDLM10}.
		
		It is more common to speak of a \KL~inequality if there are $\epsilon,\mu>0$ such that  
		\begin{equation}\label{eq_KLineqlit2}
			\CJ(x)-\CJ(\xopt) \leq \frac{1}{\mu}\norm{\nabla\CJ(x)}^2 \qquad \forall x\in\{x:\CJ(\xopt)<\CJ(x)<\CJ(\xopt)+\epsilon\}
		\end{equation}
		is satisfied for a stationary point $\xopt$, cf. \cite[(1) with $\varphi(t)=t^\frac12$]{BDLM10}. 
		The inequality that we use is similar to \cref{eq_KLineqlit2}, but less stringent. 
		Specifically, we consider the sequence $(x_k)$ generated by Algorithm~\ref{alg_hybrid} and we recall from \cref{lem_welldef}
		that $(\CJ(x_k))$ is strictly monotonically decreasing and that $\CJopt:=\lim_{k\to\infty}\CJ(x_k)$ exists. 
		We demand that there are $\bar k,\mu>0$ such that 
		\begin{equation}\label{eq_PLcond}
			\CJ(x_k)-\CJopt \leq \frac{1}{\mu}\norm{\nabla\CJ(x_k)}^2 \qquad \forall k\geq\bar k. 
		\end{equation}
		We have not seen this \emph{algorithmic version} of the \KL~inequality in the literature. 
		It is not difficult to check that well-known \emph{error bound conditions} like the one in 
		\cite[Assumption~2]{TY09} imply \cref{eq_PLcond}. Thus, the following result holds in particular under any of those error bound conditions. 
		The significance of all aforementioned conditions including \cref{eq_PLcond} 
		is that they allow for minimizers that are neither locally unique nor have a regular Hessian. 
		
		As the second main result of this work we now show linear convergence of Algorithm~\ref{alg_hybrid} under \cref{eq_PLcond}.	
		We recall that the parameter $\sigma$ appears in the Armijo~condition~\cref{eq_armijocond}.
				
		\begin{theorem}\label{thm_rateofconvPL}
			Let \cref{ass_linconv} and \cref{eq_PLcond} hold.	
			Then there exists $\xopt$ such that 
			\begin{enumerate}
				\item[1)] there hold $\nabla\CJ(\xopt)=0$ and $\CJopt=\CJ(\xopt)$;
				\item[2)] the iterates $(x^k)$ converge r-linearly to $\xopt$;
				\item[3)] the gradients $(\nabla\CJ(x_k))$ converge r-linearly to zero;
				\item[4)] the function values $(\CJ(x_k))$ converge q-linearly to $\CJ(\xopt)$. 
				Specifically, we have 
				\begin{equation}\label{eq_finalineqinproof}
					\CJ(x_{k+1})-\CJ(\xopt)
					\leq \left(1-\frac{\sigma\alpha_k\mu}{\norm{B_k}}\right)\Bigl[\CJ(x_k)-\CJ(\xopt)\Bigr] \qquad \forall k\geq\bar k.
				\end{equation}
				The supremum of the term in round brackets is strictly smaller than 1. 
			\end{enumerate} 
		\end{theorem}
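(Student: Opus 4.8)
The plan is to turn the sufficient-decrease property of the line search together with the \KL-type inequality \cref{eq_PLcond} into the q-linear recursion \cref{eq_finalineqinproof}, then to bootstrap this to r-linear convergence of the gradients and of the iterates, and finally to identify $\xopt$ as the limit of $(x_k)$. Throughout I would abbreviate $\bar B:=\sup_k\norm{B_k}$ and $\bar H:=\sup_k\norm{B_k^{-1}}$, which are finite by \cref{ass_linconv}~3), and use that $\CJopt=\lim_k\CJ(x_k)$ exists with $\CJ(x_k)\geq\CJopt$ for all $k$ by \cref{lem_welldef}, as well as the fact that \cref{thm_globconv} applies (since \cref{ass_linconv}~1) contains \cref{ass_globconv}) and yields $\nabla\CJ(x_k)\to 0$, hence $\norm{d_k}=\norm{B_k^{-1}\nabla\CJ(x_k)}\leq\bar H\norm{\nabla\CJ(x_k)}\to 0$.

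\emph{Step 1: a uniform lower bound on the step sizes.} First I would prove that $\underline\alpha:=\inf_k\alpha_k>0$. For Wolfe--Powell step sizes this is the standard argument: the curvature condition in \cref{eq_WolfePowellcond}, the inclusions $x_k,x_{k+1}\in\Omega$ (the latter because \cref{eq_armijocond} enforces descent), the Lipschitz continuity of $\nabla\CJ$ on $\Omega$, and $-\nabla\CJ(x_k)^Td_k=d_k^TB_kd_k\geq\bar H^{-1}\norm{d_k}^2$ together give $\alpha_k\geq(1-\eta)/(L\bar H)$. For Armijo backtracking, either $\alpha_k=1$, or $\hat\alpha_k:=\alpha_k\beta^{-1}$ violates \cref{eq_armijocond}; in the latter case the mean value theorem produces a point $\xi_k$ on the segment $[x_k,x_k+\hat\alpha_k d_k]$ with $[\nabla\CJ(\xi_k)-\nabla\CJ(x_k)]^Td_k\geq(1-\sigma)\bigl(-\nabla\CJ(x_k)^Td_k\bigr)\geq(1-\sigma)\bar H^{-1}\norm{d_k}^2$. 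Because $\alpha_k\leq 1$ and $\norm{d_k}\to 0$, that segment lies in $\Omega_\delta$ for all large $k$, and the very case distinction used in the proof of \cref{thm_globconv} (uniform continuity of $\CJ$, respectively Lipschitz continuity of $\nabla\CJ$, on $\Omega_\delta$, cf.\ \cref{ass_linconv}~4)) shows that $\nabla\CJ$ obeys a Lipschitz estimate along this segment, which forces $\alpha_k$ to stay bounded away from zero; the finitely many remaining step sizes are positive. I expect this step to be the main obstacle, since it amounts to re-running the $\Omega$-versus-$\Omega_\delta$ bookkeeping of \cref{thm_globconv}.

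\emph{Step 2: the q-linear recursion.} Since $d_k=-H_k\nabla\CJ(x_k)$, we have $\nabla\CJ(x_k)^Td_k=-\nabla\CJ(x_k)^TB_k^{-1}\nabla\CJ(x_k)\leq-\norm{\nabla\CJ(x_k)}^2/\norm{B_k}$, so the descent condition \cref{eq_armijocond}, which both line searches satisfy, gives $\CJ(x_{k+1})\leq\CJ(x_k)-\sigma\alpha_k\norm{\nabla\CJ(x_k)}^2/\norm{B_k}$. Subtracting $\CJopt$ and inserting $\norm{\nabla\CJ(x_k)}^2\geq\mu\bigl(\CJ(x_k)-\CJopt\bigr)$ from \cref{eq_PLcond} for $k\geq\bar k$ yields precisely \cref{eq_finalineqinproof}. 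From $\alpha_k\geq\underline\alpha$ and $\norm{B_k}\leq\bar B$ the bracket obeys $1-\sigma\alpha_k\mu/\norm{B_k}\leq 1-\sigma\underline\alpha\mu/\bar B<1$, which is the asserted supremum bound; putting $q:=\max\{0,1-\sigma\underline\alpha\mu/\bar B\}\in[0,1)$ and using that the left-hand side of \cref{eq_finalineqinproof} is nonnegative, I obtain $\CJ(x_{k+1})-\CJopt\leq q\bigl(\CJ(x_k)-\CJopt\bigr)$ for all $k\geq\bar k$, i.e.\ part~4) with q-linear rate.

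\emph{Step 3: r-linear rates and the limit point.} Iterating the last inequality gives $\CJ(x_k)-\CJopt\leq q^{\,k-\bar k}\bigl(\CJ(x_{\bar k})-\CJopt\bigr)$. Reinserting this into $\sigma\alpha_k\norm{\nabla\CJ(x_k)}^2/\norm{B_k}\leq\CJ(x_k)-\CJ(x_{k+1})\leq\CJ(x_k)-\CJopt$ and using $\alpha_k\geq\underline\alpha$, $\norm{B_k}\leq\bar B$ shows $\norm{\nabla\CJ(x_k)}$ decays geometrically with ratio $\sqrt q$, which is part~3). For the iterates, the descent condition and \cref{eq_PLcond} give $\sigma\alpha_k\bar H^{-1}\norm{d_k}^2\leq-\sigma\alpha_k\nabla\CJ(x_k)^Td_k\leq\CJ(x_k)-\CJopt\leq\mu^{-1}\norm{\nabla\CJ(x_k)}^2$, and combining this with $\norm{d_k}\geq\bar B^{-1}\norm{\nabla\CJ(x_k)}$ yields $\norm{x_{k+1}-x_k}=\alpha_k\norm{d_k}\leq(\bar H\bar B/(\sigma\mu))\norm{\nabla\CJ(x_k)}$, which again decays geometrically with ratio $\sqrt q$. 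Hence $(x_k)$ is a Cauchy sequence, so it converges to some $\xopt$, and summing the tail of the geometric series gives $\norm{x_k-\xopt}\leq C(\sqrt q)^{\,k-\bar k}$, i.e.\ part~2). Finally, continuity of $\nabla\CJ$ and of $\CJ$ together with $\nabla\CJ(x_k)\to 0$, $\CJ(x_k)\to\CJopt$ and $x_k\to\xopt$ give $\nabla\CJ(\xopt)=0$ and $\CJ(\xopt)=\CJopt$, which is part~1).
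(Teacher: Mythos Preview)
Your proposal is correct and follows essentially the same strategy as the paper: combine the Armijo descent with the PL-type inequality to obtain the q-linear recursion \cref{eq_finalineqinproof}, establish a uniform lower bound on the step sizes (via the Wolfe curvature condition, respectively the $\Omega$--$\Omega_\delta$ bookkeeping from the proof of \cref{thm_globconv} for backtracking), and then bootstrap to r-linear rates and convergence of the iterates. The only noteworthy variation is in Step~3: the paper first proves $\sup_k\alpha_k<\infty$ and then bounds $\norm{s_K}^2$ directly by the telescoping tail $\sum_{k\geq K}[\CJ(x_k)-\CJ(x_{k+1})]\leq\CJ(x_K)-\CJopt$, whereas you first derive the r-linear rate for $\norm{\nabla\CJ(x_k)}$ and bound each step by $\norm{s_k}\leq(\bar H\bar B/(\sigma\mu))\norm{\nabla\CJ(x_k)}$; both routes are equally short and valid.
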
	
			
		\begin{proof}
		The proof is divided into two parts. First we show 4) with $\CJ(\xopt)$ replaced by $\CJopt$. Then we prove 1)--3). 
		Continuity then implies that 4) holds with $\CJopt$ replaced by $\CJ(\xopt)$.\\			
		\textbf{Part 1: Q-linear convergence of $(\CJ(x_k))$}\\
		Setting $m_k:=\norm{B_k}^{-1}>0$ and using the Armijo condition \cref{eq_armijocond} we find for all $k\geq\bar k$  
		\begin{equation*}
			\begin{split}
				\CJ(x_{k+1})-\CJopt
				& = \CJ(x_{k+1})-\CJ(x_k)+\CJ(x_k)-\CJopt\\
				&\leq \sigma\alpha_k\nabla\CJ(x_k)^T d_k + \CJ(x_k)-\CJopt
				\leq -\sigma\alpha_k m_k\norm{\nabla\CJ(x_k)}^2 + \CJ(x_k)-\CJopt\\
				&\stackrel{\cref{eq_PLcond}}{\leq} -\sigma\alpha_k m_k\mu\bigl[\CJ(x_k)-\CJopt\bigr] + \CJ(x_k)-\CJopt
				= \left(1-\sigma\alpha_k m_k\mu\right) \bigl[\CJ(x_k)-\CJopt\bigr].
			\end{split}
		\end{equation*}
		This shows \cref{eq_finalineqinproof}. The boundedness of $(\norm{B_k})$ implies $\inf_k m_k>0$, so we obtain that the supremum of the term in round brackets is strictly smaller than one if there is $\alpha>0$ such that $\alpha_k\geq\alpha$ for all $k$. 
		The existence of such an $\alpha$ is well-known for the Wolfe--Powell conditions 
		since $(\norm{B_k^{-1}})$ is bounded and $\nabla\CJ$ is Lipschitz continuous in $\Omega$.
		For Armijo with backtracking the existence of $\alpha$ can be established similar as in the proof of \cref{thm_globconv}, starting at \cref{eq_yal} and using Case~1.\\			
		\textbf{Part 2: R-linear convergence of $(x_k)$ and $(\nabla \CJ(x_k))$}
		\\First we show that $\sup_k \alpha_k<\infty$.
		For all $k\geq\bar k$ the Armijo condition and \cref{eq_PLcond} imply that
		\begin{equation*}
			\alpha_k\sigma \nabla\CJ(x_k)^T B_k^{-1} \nabla\CJ(x_k)
			\leq \CJ(x_k)-\CJ(x_{k+1})
			\leq \CJ(x_k)-\CJopt
			\leq \frac{1}{\mu}\norm{\nabla\CJ(x_k)}^2,
		\end{equation*}	
		where we used that $\CJ(x_{k+1})\geq\CJopt$ since $(\CJ(x_k))$ is monotonically decreasing by \cref{lem_welldef}. 
		This yields for all $k\geq\bar k$ that $\alpha_k \leq \norm{B_k}/(\sigma\mu)$,
		proving $\sup_k\alpha_k<\infty$ as $(\norm{B_k})$ is bounded by assumption. 
		Next we deduce from $-\nabla\CJ(x_k)^T d_k = d_k^T B_k d_k$ and the Armijo condition that 
		\begin{equation*}
			\begin{split}
				\sum_{k=K}^\infty\frac{1}{\alpha_k}\frac{\norm{s_k}^2}{\norm{B_k^{-1}}}
				\leq\sum_{k=K}^\infty\alpha_k\frac{\norm{d_k}^2}{\norm{B_k^{-1}}}
				&\leq -\sum_{k=K}^{\infty}\alpha_k\nabla\CJ(x_k)^T d_k\\
				&\leq \frac{1}{\sigma}\sum_{k=K}^\infty \left[\CJ(x_{k})-\CJ(x_{k+1})\right]
				= \frac{1}{\sigma}\left[\CJ(x_K)-\CJopt\right]
			\end{split}
		\end{equation*}
		for any $K\in\N_0$. As $\beta:=\sup_{k\geq\bar k}\alpha_k\norm{B_k^{-1}}<\infty$, this yields
		for any $K\geq\bar k$
		\begin{equation*}
			\norm{s_K}^2 \leq \sum_{k=K}^\infty\norm{s_k}^2
			\leq \frac{\beta}{\sigma}\left[\CJ(x_K)-\CJopt\right],
		\end{equation*}
		hence $\norm{s_K} \leq \hat\beta\sqrt{\CJ(x_K)-\CJopt}$, where $\hat\beta:=\sqrt{\beta/\sigma}$.
		Since $(\CJ(x_k))$ converges q-linearly to $\CJopt$ by Part~1, this implies $\sum_k\norm{s_k}<\infty$, 
		so $(x_k)$ is convergent, i.e., there is $\xopt\in\CX$ with $\lim_{k\to\infty}x_k=\xopt$. 
		By continuity, $\CJ(\xopt)=\CJopt$. By \cref{thm_globconv} there holds
		$\nabla\CJ(\xopt)=0$. Furthermore, by applying 
		\cref{eq_finalineqinproof} to $\norm{s_K} \leq \hat\beta\sqrt{\CJ(x_K)-\CJopt}$ we infer 
		for all $K\geq\bar k$ 
		\begin{equation*}
			\Norm{x_K-\xopt}=\Norm{\sum_{k=K}^\infty s_k }
			\leq\sum_{k=K}^\infty\norm{s_k}\leq \frac{\hat\beta}{1-\nu}\sqrt{\CJ(x_K)-\CJ(\xopt)},
		\end{equation*} 
		where $\nu:=\sup_{k\geq\bar k} \sqrt{1-\sigma\alpha_k m_k\mu}$ satisfies $\nu<1$ by Part~1. 
		Thus, for all $K\geq\bar k$ and any $j\in\N_0$
		\begin{equation}\label{eq_KLi}
			\begin{split}
				\norm{x_{K+j}-\xopt}^2 
				&\leq \left(\frac{\hat\beta}{1-\nu}\right)^2\Bigl[\CJ(x_{K+j})-\CJ(\xopt)\Bigr]\\
				&\hspace*{-.1cm}\stackrel{\cref{eq_finalineqinproof}}{\leq} \left(\frac{\hat\beta\nu^j}{1-\nu}\right)^2 \Bigl[\CJ(x_K)-\CJ(\xopt)\Bigr]
				\leq \frac{L}{2} \left(\frac{\hat\beta\nu^j}{1-\nu}\right)^2 \norm{x_K-\xopt}^2
			\end{split}
		\end{equation}
		which demonstrates that $(x_k)$ converges r-linearly to $\xopt$. 
		The Lipschitz continuity of $\nabla\CJ$ together with \cref{eq_KLi} yields for any $K\geq\bar k$ and any $j\in\N_0$
		\begin{equation}\label{eq_KLi2}
			\begin{split}
				\norm{\nabla\CJ(x_{K+j})}^2 \leq L^2\norm{x_{K+j}-\xopt}^2 
				&\leq \left(\frac{L\hat\beta\nu^j}{1-\nu}\right)^2 \Bigl[\CJ(x_K)-\CJ(\xopt)\Bigr]\\
				&\leq \left(\frac{L\hat\beta \nu^j}{1-\nu}\right)^2\frac{1}{\mu}\norm{\nabla\CJ(x_K)}^2,
			\end{split}
		\end{equation}
		where the final inequality relies on \cref{eq_PLcond}. This proves the r-linear convergence of $(\nabla\CJ(x_k))$. 
	\end{proof}
	
	\begin{remark}\label{rem_boundednessBs}
		\phantom{induces linebreak}
		\begin{enumerate}
			\item[1)] \Cref{thm_rateofconvPL} does not claim that the stationary point $\xopt$ is a local minimizer. On the other hand, since $(\CJ(x_k))$ is
			strictly decreasing, $\xopt$ cannot be a local maximizer either.
			
			\item[2)] If $\CJ$ is strongly convex in the convex level set $\Omega$, then \cref{eq_PLcond} holds for $\bar k=0$.
			
			\item[3)] If \cref{eq_PLcond} holds for some $\bar k>0$, then it holds for all $\bar k\geq 0$ (after decreasing $\mu$ if need be). 
			
			\item[4)] The estimates \cref{eq_KLi} and \cref{eq_KLi2} quantify the r-linear convergence of $(x_k)$ and $(\nabla\CJ(x_k))$.
			
			\item[5)] As in \cref{rem_unifcontfindim} the statements concerning $\Omega_\delta$ in \cref{ass_globconv} and in \Cref{ass_linconv} 
			can be dropped if $\CX$ is finite dimensional and $(x_k)$ is bounded. 
						
		\end{enumerate}
	\end{remark}	
	
	\subsubsection{Linear convergence under local strong convexity}	
		
	The second result on the convergence rate of Algorithm~\ref{alg_hybrid} relies on the following lemma.
	
	\begin{lemma}\label{lem_aux}
		Let \cref{ass_linconv} hold except for the statements concerning $\Omega_\delta$. 
		Let $(x_k)$ have a cluster point $\xopt$ such that $\CJ\vert_{\CN}$ is strongly convex, where $\CN\subset\Omega$ is a convex neighborhood of $\xopt$.
		Then $\lim_{k\to\infty}x_k = \xopt$. 
	\end{lemma}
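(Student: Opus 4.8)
The plan is to combine three ingredients: the global convergence statement $\norm{\nabla\CJ(x_k)}\to 0$ from \cref{thm_globconv}; the two elementary consequences of the local strong convexity of $\CJ\vert_\CN$, namely a quadratic growth bound and a local \Lo-type inequality around $\xopt$; and a trapping argument showing that the iterates can no longer leave a small ball around $\xopt$ once one of them has entered it. Throughout I would write $\mu_\CN>0$ for the strong convexity modulus of $\CJ\vert_\CN$ and abbreviate $M:=\sup_k\norm{B_k}<\infty$ and $M':=\sup_k\norm{B_k^{-1}}<\infty$ (finite by \cref{ass_linconv}~3)).

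First I would gather the preliminaries. Since Algorithm~\ref{alg_hybrid} is run with $\tl=0$ and produces an infinite sequence, \cref{lem_welldef} gives that $(\CJ(x_k))$ decreases strictly to a limit $\CJopt:=\lim_k\CJ(x_k)$, and \cref{thm_globconv} gives $\norm{\nabla\CJ(x_k)}\to 0$ and $\nabla\CJ(\xopt)=0$; an infinite run also forces $\nabla\CJ(x_k)\neq 0$ (equivalently $d_k\neq 0$) for every $k$. As $\xopt$ is a cluster point, continuity yields $\CJopt=\CJ(\xopt)$, so $\CJ(x_k)\downarrow\CJ(\xopt)$. Finally, from the first-order characterization of strong convexity on the convex set $\CN$, together with $\xopt\in\CN$ and $\nabla\CJ(\xopt)=0$, one obtains for all $x\in\CN$ the quadratic growth bound $\CJ(x)-\CJ(\xopt)\ge\tfrac{\mu_\CN}{2}\norm{x-\xopt}^2$ and, by completing the square in $\CJ(\xopt)\ge\CJ(x)+\nabla\CJ(x)^T(\xopt-x)+\tfrac{\mu_\CN}{2}\norm{\xopt-x}^2$, the local error bound $\CJ(x)-\CJ(\xopt)\le\tfrac{1}{2\mu_\CN}\norm{\nabla\CJ(x)}^2$.

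The decisive step is to control $\norm{s_k}$ by $\norm{\nabla\CJ(x_k)}$ with a uniform constant whenever $x_k\in\CN$. From $B_k d_k=-\nabla\CJ(x_k)$ one gets $\norm{d_k}\le M'\norm{\nabla\CJ(x_k)}$ and $-\nabla\CJ(x_k)^T d_k=\nabla\CJ(x_k)^T B_k^{-1}\nabla\CJ(x_k)\ge M^{-1}\norm{\nabla\CJ(x_k)}^2$. If $x_k\in\CN$, combining the Armijo decrease $\CJ(x_k)-\CJ(x_{k+1})\ge-\sigma\alpha_k\nabla\CJ(x_k)^T d_k$ with the chain $\CJ(x_k)-\CJ(x_{k+1})\le\CJ(x_k)-\CJopt\le\tfrac{1}{2\mu_\CN}\norm{\nabla\CJ(x_k)}^2$ — where the first inequality uses $\CJ(x_{k+1})\ge\CJopt$ and the second the local error bound at $x_k$ — and dividing by $\norm{\nabla\CJ(x_k)}^2>0$ gives $\alpha_k\le M/(2\sigma\mu_\CN)=:\bar\alpha$, hence $\norm{s_k}=\alpha_k\norm{d_k}\le\bar\alpha M'\norm{\nabla\CJ(x_k)}$. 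The purpose of this estimate is that it provides an a priori step-length bound exactly on the set where it is needed, so that no global bound on $(\alpha_k)$ and none of the uniform-continuity hypotheses on $\Omega_\delta$ are required.

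It then remains to run the trapping argument. Fix $\rho>0$ with $\{x\in\CX:\norm{x-\xopt}\le\rho\}\subseteq\CN$. Using $\CJ(x_k)-\CJ(\xopt)\to 0$ and the quadratic growth bound, choose $N_1$ so that $k\ge N_1$ and $x_k\in\CN$ imply $\norm{x_k-\xopt}<\rho/4$; using $\norm{\nabla\CJ(x_k)}\to 0$ and the step-length estimate, choose $N_2$ so that $k\ge N_2$ and $x_k\in\CN$ imply $\norm{s_k}<\rho/4$; and, since $\xopt$ is a cluster point, choose $K\ge\max\{N_1,N_2\}$ with $\norm{x_K-\xopt}<\rho/4$. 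An induction over $k\ge K$ then gives $\norm{x_k-\xopt}<\rho/2$: if $\norm{x_k-\xopt}<\rho/2$ then $x_k\in\CN$, hence $\norm{x_k-\xopt}<\rho/4$ and $\norm{s_k}<\rho/4$, whence $\norm{x_{k+1}-\xopt}\le\norm{x_k-\xopt}+\norm{s_k}<\rho/2$. Thus $x_k\in\CN$ for all $k\ge K$, and the quadratic growth bound yields $\norm{x_k-\xopt}^2\le\tfrac{2}{\mu_\CN}\bigl(\CJ(x_k)-\CJ(\xopt)\bigr)\to 0$, i.e.\ $x_k\to\xopt$. I expect the main obstacle to be precisely the step-length control of the third paragraph: without the local error bound there is no uniform bound on $(\alpha_k)$ — notably for Wolfe--Powell line searches — so the crude estimate $\norm{s_k}\le\alpha_k\norm{d_k}$ need not tend to zero, and the way around this is to feed the local error bound back into the Armijo inequality.
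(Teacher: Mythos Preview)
Your proof is correct and follows essentially the same approach as the paper: you use the local strong convexity to obtain both the quadratic growth bound and the local \Lo-type inequality, feed the latter into the Armijo decrease to bound $\alpha_k$ (hence $\norm{s_k}$) uniformly whenever $x_k\in\CN$, and then run a trapping argument to keep the iterates in a small ball around $\xopt$. Your trapping argument with the $\rho/4$--$\rho/2$ radii is spelled out more carefully than the paper's somewhat terse induction, but the underlying mechanism is identical.
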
	
	
	\begin{proof}
		From \cref{thm_globconv} we obtain $\CJ(\xopt)=\lim_{k\to\infty}\CJ(x_k)$ and $\nabla\CJ(\xopt)=0$ for the cluster point $\xopt$. 
		The strong convexity of $\CJ$ implies the growth condition
		\begin{equation}\label{eq_strconvgrc}
			\CJ(\xopt) + \kappa\norm{x-\xopt}^2 \leq\CJ(x)
		\end{equation}
		for all $x$ in $\CN$ and a constant $\kappa>0$. 
		Also due to the strong convexity, the \KL-inequality $\CJ(x)-\CJ(\xopt)\leq \mu^{-1} \norm{\nabla\CJ(x)}^2$ holds for all $x\in\CN$ and a constant $\mu>0$. 
		Let $\epsilon'\in(0,\delta/2]$. We have to show that there is $\bar k\geq 0$ such that $\norm{x_k-\xopt}\leq\epsilon'$ for all $k\geq \bar k$. 
		Owing to \cref{ass_globconv} the Armijo condition is satisfied for all $k$. 
		Hence, we have for all $x_k\in\CN$ 
		\begin{equation*}
			\alpha_k\sigma \nabla\CJ(x_k)^T B_k^{-1} \nabla\CJ(x_k)
			\leq \CJ(x_k)-\CJ(x_{k+1})
			\leq \CJ(x_k)-\CJ(\xopt) 
			\leq \frac{1}{\mu}\norm{\nabla\CJ(x_k)}^2,
		\end{equation*}	
		where we used that $\CJ(x_{k+1})\geq\CJ(\xopt)$ since $(\CJ(x_k))$ is monotonically decreasing. 
		Thus, for all $x_k\in\CN$ there holds
		\begin{equation*}
			\alpha_k \leq \frac{\norm{B_k}}{\sigma\mu}.
		\end{equation*}	
		Since $(\norm{B_k})$ is bounded by assumption, there holds $\norm{s_k}=\alpha_k\norm{d_k}
		\leq C\norm{d_k}$ for all $x_k\in\CN$ and a constant $C>0$. 
		As $(\norm{B_k^{-1}})$ is bounded by assumption, we infer 
		$\lim_{k\to\infty}\norm{d_k}=0$ from $\norm{d_k}\leq \norm{B_k^{-1}}\norm{\nabla\CJ(x_k)}$. In turn, there is $k_1>0$ such that 
		$\norm{s_k}\leq \epsilon'$ for all $k\geq k_1$ satisfying $x_k\in\CN$.
		Due to \cref{eq_strconvgrc} we have $\CJ(x)-\CJ(\xopt)\geq \kappa\epsilon'$ for all $x\in\CN\setminus\Ballop{\epsilon'}(\xopt)$.
		Letting $k_2\geq k_1$ be such that $\CJ(x_k)-\CJ(\xopt)<\kappa\epsilon'$ for all $k\geq k_2$, we deduce $x_k\not\in\CN\setminus\Ballop{\epsilon'}(\xopt)$ 
		for all $k\geq k_2$. Selecting $\bar k\geq k_2$ such that $x_{\bar k}\in\Ballop{\epsilon'}(\xopt)$, induction yields $x_k\in\Ballop{\epsilon'}(\xopt)$ for all $k\geq \bar k$.
	\end{proof}

	We now show linear convergence under a different set of assumptions than in \cref{thm_rateofconvPL}. 
	In contrast to \cref{thm_rateofconvPL}, the assumptions here ensure that $\xopt$ is a local minimizer. 
	For the special case $\CS\equiv 0$ and $S_k=0$ for all $k$, the following result may be viewed as an improved version of the classical convergence result \cite[Thm.~7.1]{Liu1989} from Liu and Nocedal on \LBFGS, the most notable improvement being that strong convexity is required only locally. 
	Our result covers Armijo step sizes, whereas \cite[Thm.~7.1]{Liu1989} does not. Also, our differentiability requirements are lower and we prove a convergence rate for $(\nabla\CJ(x_k))$, which is not done in \cite[Thm.~7.1]{Liu1989}.
	
	\begin{theorem}\label{thm_rateofconvstrongminimizer}
	Let \cref{ass_linconv} hold except for the statements concerning $\Omega_\delta$.
	Let $(x_k)$ have a cluster point $\xopt$ such that $\CJ\vert_{\CN}$ is $\kappa$-strongly convex, where $\CN\subset\Omega$ is a convex neighborhood of $\xopt$.
	Then 
	\begin{enumerate}
		\item[1)] there holds $\CJ(\xopt) + \kappa\norm{x-\xopt}^2 \leq\CJ(x)$ for all $x\in\CN$;
		\item[2)] the iterates $(x_k)$ converge r-linearly to $\xopt$;
		\item[3)] the gradients $(\nabla\CJ(x_k))$ converge r-linearly to zero;
		\item[4)] the function values $(\CJ(x_k))$ converge q-linearly to $\CJ(\xopt)$. 
		Specifically, if $\bar k$ is such that $x_k\in\CN$ for all $k\geq\bar k$, then we have
		\begin{equation*}
			\CJ(x_{k+1})-\CJ(\xopt)
			\leq \left(1-\frac{2\sigma\alpha_k \kappa}{\norm{B_k}}\right)\Bigl[\CJ(x_k)-\CJ(\xopt)\Bigr] \qquad \forall k\geq\bar k.
		\end{equation*}
		The supremum of the term in round brackets is strictly smaller than 1. 
	\end{enumerate}
	\end{theorem}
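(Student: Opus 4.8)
The plan is to deduce the theorem from \cref{lem_aux} together with the argument already carried out in the proof of \cref{thm_rateofconvPL}. By \cref{lem_aux} the whole sequence converges, $\lim_{k\to\infty}x_k=\xopt$, and since $\CN$ is a neighborhood of $\xopt$ there is an index $\bar k$ with $x_k\in\CN$ for all $k\geq\bar k$. From \cref{thm_globconv} we have $\nabla\CJ(\xopt)=0$, and by \cref{lem_welldef} and continuity the limit $\CJopt:=\lim_{k\to\infty}\CJ(x_k)$ exists and equals $\CJ(\xopt)$. Statement~1) then follows immediately by evaluating the defining inequality of $\kappa$-strong convexity of $\CJ\vert_{\CN}$ at the base point $\xopt$ and using $\nabla\CJ(\xopt)=0$; in particular $\xopt$ is the unique minimizer of $\CJ\vert_{\CN}$.

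Second, I would convert the local strong convexity into the algorithmic \KL-type inequality \cref{eq_PLcond} with exponent $\tfrac12$ and constant $\mu=2\kappa$, valid from the index $\bar k$ on. Monotonicity of $\nabla\CJ$ on the convex set $\CN$, which follows from $\kappa$-strong convexity, gives $\nabla\CJ(x_k)^T(x_k-\xopt)\geq 2\kappa\norm{x_k-\xopt}^2$ for $k\geq\bar k$, hence $\norm{\nabla\CJ(x_k)}\geq 2\kappa\norm{x_k-\xopt}$ by Cauchy--Schwarz; on the other hand, convexity of $\CJ\vert_{\CN}$ yields $\CJ(x_k)-\CJ(\xopt)\leq\nabla\CJ(x_k)^T(x_k-\xopt)\leq\norm{\nabla\CJ(x_k)}\norm{x_k-\xopt}$, and combining the two estimates gives $\CJ(x_k)-\CJopt\leq\tfrac{1}{2\kappa}\norm{\nabla\CJ(x_k)}^2$ for all $k\geq\bar k$, which is exactly \cref{eq_PLcond} with $\mu=2\kappa$.

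Third, I would run the two parts of the proof of \cref{thm_rateofconvPL} essentially verbatim, now restricted to the tail $k\geq\bar k$ and with $\mu$ replaced by $2\kappa$. Part~1 combines the Armijo descent $\CJ(x_{k+1})-\CJ(x_k)\leq-\sigma\alpha_k\norm{B_k}^{-1}\norm{\nabla\CJ(x_k)}^2$ with the \KL-inequality to obtain, for all $k\geq\bar k$,
\begin{equation*}
\CJ(x_{k+1})-\CJ(\xopt)\leq\Bigl(1-\tfrac{2\sigma\alpha_k\kappa}{\norm{B_k}}\Bigr)\bigl[\CJ(x_k)-\CJ(\xopt)\bigr],
\end{equation*}
which is statement~4). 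Part~2 then shows $\norm{s_K}\leq\hat\beta\sqrt{\CJ(x_K)-\CJ(\xopt)}$ for $K\geq\bar k$ with a constant $\hat\beta$, whence $\sum_k\norm{s_k}<\infty$; using the q-linear decay of $\CJ(x_k)-\CJ(\xopt)$ from Part~1 this gives the r-linear convergence of $(x_k)$ to $\xopt$ as in \cref{eq_KLi}, and then the r-linear convergence of $(\nabla\CJ(x_k))$ to zero via the Lipschitz continuity of $\nabla\CJ$ as in \cref{eq_KLi2}. This establishes statements~2) and 3).

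The only genuine obstacle --- and the one place where the $\Omega_\delta$-statements of \cref{ass_linconv}, which are excluded here, must be worked around --- is to verify $\inf_{k\geq\bar k}\alpha_k>0$; this is what makes the supremum of $1-\tfrac{2\sigma\alpha_k\kappa}{\norm{B_k}}$ strictly smaller than $1$ (and, together with $\alpha_k\leq\norm{B_k}/(2\sigma\kappa)$ from \cref{eq_PLcond}, also yields $\sup_k\alpha_k\norm{B_k^{-1}}<\infty$, as needed in Part~2). For Wolfe--Powell step sizes this is classical since $(\norm{B_k^{-1}})$ is bounded and $\nabla\CJ$ is Lipschitz on $\Omega$. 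For Armijo with backtracking I would exploit that $\norm{d_k}\leq\norm{B_k^{-1}}\norm{\nabla\CJ(x_k)}\to 0$ by \cref{thm_globconv} and the boundedness of $(\norm{B_k^{-1}})$: hence, for all large $k$, every trial point $x_k+\beta^j d_k$ ($j\geq 0$) lies in $\CN\subset\Omega$, so the standard sufficient-decrease estimate for backtracking applies with the Lipschitz constant of $\nabla\CJ$ on $\Omega$, and, combined with $-\nabla\CJ(x_k)^T d_k/\norm{d_k}^2\geq\norm{B_k^{-1}}^{-1}$ being bounded away from zero, yields the required uniform lower bound on $\alpha_k$. This is the same mechanism underlying \cref{rem_unifcontfindim} and \cref{rem_boundednessBs}~5).
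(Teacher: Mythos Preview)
Your proposal is correct and follows essentially the same approach as the paper: invoke \cref{lem_aux} to obtain $x_k\to\xopt$ and an index $\bar k$ with $x_k\in\CN$ for $k\geq\bar k$, use $\nabla\CJ(\xopt)=0$ together with $\kappa$-strong convexity to verify \cref{eq_PLcond} with $\mu=2\kappa$, and then appeal to (the proof of) \cref{thm_rateofconvPL} for parts 2)--4), while part~1) is the standard quadratic growth bound from strong convexity. You are in fact more careful than the paper on one point: the paper simply cites \cref{thm_rateofconvPL} for 2)--4), whereas you explicitly justify the lower bound $\inf_{k\geq\bar k}\alpha_k>0$ in the Armijo case \emph{without} the $\Omega_\delta$-hypotheses, by using $\norm{d_k}\to 0$ to keep the backtracking segments inside $\CN\subset\Omega$ so that the Lipschitz constant of $\nabla\CJ$ on $\Omega$ applies --- a detail the paper leaves implicit.
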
	
	
	\begin{proof}
		By \cref{lem_aux} there is $\bar k$ such that $x_k\in\CN$ for all $k\geq\bar k$.
		Moreover, by \cref{thm_globconv} there holds $\nabla\CJ(\xopt)=0$,
		so the strong convexity of $\CJ$ implies that \cref{eq_PLcond} is satisfied for all $k\geq\bar k$ with constant $\mu:=2\kappa$. 
		Parts 2)--4) now follow from \cref{thm_rateofconvPL}. 
		It is well-known that the strong convexity of $\CJ$ implies 1).
	\end{proof}
	
	\begin{remark}
		Part~1) of \cref{thm_rateofconvstrongminimizer} shows, in particular, that $\xopt$ is a strict local minimizer. 
		Similar comments as in \cref{rem_boundednessBs}~2)--4) also apply to \cref{thm_rateofconvstrongminimizer}.
		The \emph{existence} of cluster points is for instance ensured if $(x_k)$ is bounded and $\CX$ is finite dimensional.
	\end{remark}
					

\section{Numerical experiments}\label{sec:experiments}
	
	We study the performance of \ref{alg_hybrid} for two classes of inverse problems: 
	First for \emph{non-convex image registration problems}.
	These are challenging real-world problems that demonstrate the effectiveness of the	algorithm. 
	Second for \emph{strictly convex quadratics}. 
	These are constructed problems that allow us to examine the convergence properties of the algorithm more closely. 
	
	Throughout, we consider seven choices of the seed matrix.  
	Two are based on the classical \LBFGS~method Algorithm~\ref{algo:lbfgs} with $H_k^{(0)}=\hat\tau_k I$, 
	where either $\hat\tau_k=\hat\tau_k^\Hy$ for all $k$ or $\hat\tau_k=\hat\tau_k^\Hs$; cf.~\cref{def:tau:H}. 
	These two algorithms are state-of-the-art in image registration. They do not require to solve linear systems, which distinguishes them from 
	the other five choices. 
	The five other choices are based on the structured approach Algorithm~\ref{alg_hybrid} and 
	they use $B_k^{(0)}=\tau_k I + \nabla^2\CS(x_k)$, 
	where $\tau_k$ either takes one of the four values defined in \cref{def:tau:B} or is computed by Algorithm~\ref{alg_adaptivechoiceoftau}. 
	We prefix the schemes by $H$ or $B$ when we refer to them in this section; e.g., we write $\mathrm{B\Bp}$ for the choice $\tau^\Bp$ and $\mathrm{H\Hy}$ for $\hat\tau^\Hy$.
	
	\renewcommand{\Hy}{{\mathrm{Hy}}}
	\renewcommand{\Hs}{{\mathrm{Hs}}}
	\renewcommand{\Hp}{{\mathrm{Hs}}}
	\renewcommand{\Bp}{{\mathrm{Bs}}}
	\renewcommand{\Bz}{{\mathrm{Bz}}}
	\renewcommand{\Bu}{{\mathrm{Bu}}}
	\renewcommand{\GM}{{\mathrm{Bg}}}
	
	\subsection{Image registration}\label{sec:IR}
		
		\paragraph{Basics}
		We solve 22 real-life large-scale problems from medical image registration (IR) \cite{FAIR09}. 
		Registration problems are generally highly non-convex and ill-posed. Given a
		pair of images $T$ and $R$, the goal is to find a transformation field $\phi$
		such that the transformed image $T(\phi)$ is similar to $R$, i.e., 
		$T\circ\phi\approx R$. To determine $\phi$, we solve an unconstrained optimization problem
		\begin{equation*}
			\min_\phi \CJ(\phi) = \CD(\phi;T,R)+ \alpha S(\phi),
		\end{equation*}
		where $\CD$ measures the similarity between the transformed image $T(\phi)$ and
		$R$. Typical choices for similarity measures are the sum of squared
		difference (SSD) $\CD=\|T\circ\phi-R\|_{L_2}^2$, mutual information (MI) \cite{CMDFVSM95,C98,VW95,V95}, and normalized gradient fields (NGF) \cite{Haber2006}.
		The regularizer $\CS=\alpha S$ guarantees that the problem is solvable and it enforces
		smoothness in the field. Commonly used regularizers are 
		elastic~\cite{Broit1981}, given by $\CS=\int\mu\langle\nabla\phi,\nabla\phi\rangle
		+(\lambda+\mu)(\mathrm{div}\cdot\phi)^2$, curvature~\cite{Fischer2004}, and the non-quadratic hyperelasticity \cite{BMR13}. 
		We follow the ``discretize-then optimize'' approach. 
		To obtain a clearer comparison we work with a fixed discretization instead of a multilevel approach.
		In consequence, for each problem the transformation field satisfies $\phi \in \R^n$ with a fixed $n$ that defines the number of variables. 
		The values of $n$ are provided in \cref{tab:IRproblems}.
			
		\paragraph{Problems under consideration}
		The 22 test cases, listed in \cref{tab:IRproblems}, cover many different registration models. The 
		three-dimensional (3D) lung CT images are from the well-known DIR dataset \cite{Knig2018,Castillo2009}, 
		and the rest of the datasets are from \cite{FAIR09}.
		The test cases comprise the fidelity measures SSD, NGF and MI. They include 
		a quadratic first order (Elas), a quadratic second order (Curv) and a non-quadratic first order (H-elas) regularizer.
		Computing the full Hessian of the hyperelastic regularizer is expensive; 
		as in \cite{BMR13} we replace it by a Gauss-Newton-like approximation. 
		Recall that the convergence analysis allows for $S_k\neq\nabla^2\CS(x_k)$.
				
\begin{table}[ht]
\footnotesize
\caption{Non-quadratic test cases (TC) from image registration problems. 
Data-fidelity (MI, NGF, SSD) and regularization (Curvature, Elasticity, Hyperelasticity) are
denoted by $\CD$ and $\CS$, respectively; the regularization parameter is
$\alpha$. The problem size is $n=d\cdot\prod_{k=1}^dm_k$, where $d\in\{2,3\}$ denotes
the data dimensionality (2D or 3D) and $m_k$ the corresponding data resolution. 
The data resolution for Hand, PET-CT and MRI data are $128 \times 128$, for Disc data $16\times16$, Lung data $64 \times 64 \times X$ where $X \in [24,28]$, and Brain data $32 \times 16 \times 32$.
The last column reports the initial target registration error (Initial TRE), cf.~\cref{sec:IR}.
}\label{tab:IRproblems}

\centering
\begin{tabular}{rlrllrc}
	\toprule
	TC
	& Dataset & $n$
	& $\CD$ & $\CS$ & $\alpha$
	& Initial TRE
	\\
	\midrule
	1 & 2D-Hands & $32\,768$	 & SSD & Curv   & $1.5 \cdot 10^3$ & 1.04 (0.62)\\
	2 & 2D-Hands & $32\,768$	 & SSD & Elas   & $1.5 \cdot 10^3$ & 1.04 (0.62)\\
	3 & 2D-Hands & $32\,768$	 & NGF  & Curv   & $10^{-2}$ & 1.04 (0.62)\\
	4 & 2D-Hands & $32\,768$	 & NGF  & Elas   & $1$  & 1.04 (0.62)\\
	5 & 2D-Hands & $32\,768$	 & MI  & Curv   & $5 \cdot 10^{-3}$ & 1.04 (0.62)\\
	6 & 2D-Hands & $32\,768$	 & MI  & Elas   & $5 \cdot 10^{-3}$  & 1.04 (0.62)\\
	7 & 2D-PET-CT & $32\,768$	 & MI  & Elas   & $10^{-4}$ & N.A.\\
	8 & 2D-PET-CT & $32\,768$	 & MI  & Curv   & $10^{-1}$ & N.A.\\
	9 & 2D-PET-CT & $32\,768$	 & NGF  & Elas   & $5 \cdot 10^{-2}$ & N.A.\\
	10 & 2D-PET-CT & $32\,768$	 & NGF  & Curv   & $10$ & N.A.\\
    11 & 2D-MRI-head & $32\,768$	 & MI  & Elas   & $10^{-3}$ & N.A.\\
	12 & 2D-MRI-head & $32\,768$	 & NGF  & Elas   & $10^{-1}$ & N.A.\\	
	13 & 3D-Lung  & $294\,912$ & NGF & Curv   & $10^2$ & 3.89 (2.78) \\
    14 & 3D-Lung  & $344\,064$ & NGF & Curv   & $10^2$ & 4.34 (3.90)\\
    15 & 3D-Lung  & $319\,488$ & NGF & Curv   & $10^2$ & 6.94 (4.05) \\
    16 & 3D-Lung  & $307\,200$ & NGF & Curv   & $10^2$ & 9.83 (4.86) \\
    17 & 3D-Lung  & $331\,776$ & NGF & Curv   & $10^2$ & 7.48 (5.51) \\
    18 & 3D-Brain  & $49\,152$	 & SSD & H-elas & $(100,10,100)$ & N.A. \\
    19 & 2D-Hands &	$32\,768$ & SSD & H-elas & $(10^3,20)$ & 1.04 (0.62)\\
    20 & 2D-Hands &	$32\,768$ & NGF & H-elas & $(1,1)$ & 1.04 (0.62)\\
    21 & 2D-Hands &	$32\,768$ & MI & H-elas & $(10^{-3},1)$ & 1.04 (0.62)\\
    22 & 2D-Disc  & $512$	 & SSD & H-elas & $(100,20)$ & N.A. \\
	\bottomrule    
\end{tabular}
\end{table}

\if 0
\begin{table}
\small
\centering
\caption{%
Iteration results for the eight Hessian initialization strategies applied to
image registration problems (test cases 1--8).
The columns (from left to right) report the
strategy (S), the iteration count (iter), 
the total number of function evaluations (feval), 
the reduction in objective function $\mathrm{red}:=\CJ(\phi)/\CJ(\phi_0)$,
the average run time in seconds (sec.) and 
target registration error (TRE; mean$\pm$standard deviation).
Entries in boldface indicate lowest run-time (fastest convergence) or
smallest TRE (highest accuracy).
}\label{tab:IR_Results}

\setlength{\tabcolsep}{2.5pt}
\begin{tabular}{lrrrrrc@{\hskip3mm}rrrrrr|rr|}
\cline{1-6}
\cline{8-12}
	S & iter & feval & red & sec.  & TRE
	& & iter & feval & red & sec.  & TRE
	\\
\cline{1-6}
\cline{8-12}
\rule{0pt}{4ex}
 &\multicolumn{5}{c}{TC-1: 2D-Hands, SSD, Curv}
&&\multicolumn{5}{c}{TC-2: 2D-Hands, SSD, Elas}
\\
\cline{1-6}\cline{8-12}
Hp   &  1000 &  4905 &    27.30 &    43.48 &     0.65$\pm$0.48   &&    184 &   479 &    28.14 &    10.32 &     0.59$\pm$0.33 \\
Hy   &  1000 &  1030 &    24.70 &    19.77 &     0.53$\pm$0.31   &&    243 &   251 &    27.36 &     \textbf{9.66} &     0.53$\pm$0.28 \\
Bp   &    53 &    59 &    21.94 &     8.82 &     0.38$\pm$0.17   &&     76 &   125 &    27.00 &    11.38 &     0.52$\pm$0.26 \\
Bz   &   444 &   445 &    20.49 &    70.34 &     0.37$\pm$0.16   &&    182 &   287 &    27.05 &    15.35 &     0.52$\pm$0.26 \\
Bu   &   444 &   445 &    20.49 &    70.17 &     0.37$\pm$0.16   &&    199 &   310 &    27.06 &    16.40 &     0.52$\pm$0.27 \\
GM   &    78 &    80 &    20.84 &    12.89 &     \textbf{0.35$\pm$0.17}   &&    113 &   116 &    27.00 &    11.62 &     \textbf{0.51$\pm$0.26} \\
Adap &    44 &    50 &    21.95 &     \textbf{7.27} &     0.37$\pm$0.16   &&     94 &   121 &    27.00 &    12.53 &     \textbf{0.51$\pm$0.26} \\
\cline{1-6}\cline{8-12}
\rule{0pt}{4ex}
 &\multicolumn{5}{c}{TC-3: 2D-Hands, MI, Curv}
&&\multicolumn{5}{c}{TC-4: 2D-Hands, MI, Elas}
\\
\cline{1-6}\cline{8-12}
Hp   &   516 &  2553 &    79.27 &    31.17 &     0.86$\pm$0.69   &&    154 &   446 &    73.53 &     8.47 &     0.64$\pm$0.37 \\
Hy   &  1000 &  1034 &    70.81 &    33.24 &     0.58$\pm$0.35   &&    135 &   137 &    73.64 &     \textbf{5.50} &     0.64$\pm$0.36 \\
Bp   &    56 &    59 &    69.99 &    10.12 &     0.52$\pm$0.28   &&     43 &    67 &    72.76 &     6.79 &     0.58$\pm$0.34 \\
Bz   &   301 &   302 &    67.80 &    53.48 &     \textbf{0.35$\pm$0.16}   &&     72 &    94 &    72.84 &     6.05 &     0.58$\pm$0.32 \\
Bu   &    56 &    59 &    69.99 &    10.14 &     0.52$\pm$0.28   &&     43 &    67 &    72.76 &     6.97 &     0.58$\pm$0.34 \\
GM   &   101 &   104 &    67.52 &    18.29 &     0.36$\pm$0.17   &&     64 &    66 &    72.77 &     7.87 &     \textbf{0.57$\pm$0.31} \\
Adap &    53 &    54 &    68.92 &     \textbf{9.59} &     0.41$\pm$0.19   &&     41 &    56 &    72.81 &     6.54 &     0.57$\pm$0.33 \\
\cline{1-6}\cline{8-12}
\rule{0pt}{4ex}
 &\multicolumn{5}{c}{TC-5: 3D-Lung, NGF, Curv}
&&\multicolumn{5}{c}{TC-6: 3D-Lung, NGF, Elas}
\\
\cline{1-6}\cline{8-12}
Hp   &    75 &   204 &    95.93 &    86.48 &     2.94$\pm$2.20   &&    111 &   301 &    93.38 &   148.39 &     2.99$\pm$3.15 \\
Hy   &   184 &   185 &    94.84 &   161.83 &     1.70$\pm$0.99   &&    219 &   222 &    92.20 &   224.96 &     1.67$\pm$1.38 \\
Bp   &    58 &    87 &    94.79 &   129.31 &     1.61$\pm$0.83   &&     48 &    56 &    92.36 &   \textbf{115.97} &     1.84$\pm$1.66 \\
Bz   &   131 &   132 &    94.79 &   159.88 &     1.64$\pm$0.88   &&    149 &   150 &    92.20 &   207.63 &     \textbf{1.65$\pm$1.35} \\
Bu   &   148 &   149 &    94.74 &   181.92 &     \textbf{1.61$\pm$0.80}   &&    147 &   148 &    92.20 &   206.69 &     \textbf{1.65$\pm$1.35} \\
GM   &    77 &    79 &    94.76 &   122.76 &     1.62$\pm$0.83   &&     86 &    88 &    92.20 &   158.88 &     1.69$\pm$1.41 \\
Adap &    60 &    71 &    94.82 &   \textbf{118.01} &     1.63$\pm$0.88   &&     71 &    76 &    92.22 &   157.15 &     1.66$\pm$1.38 \\
\cline{1-6}\cline{8-12}
\rule{0pt}{4ex}
 &\multicolumn{5}{c}{TC-7: 2D-Disc, SSD, H-Elas}
&&\multicolumn{5}{c}{TC-8: 2D-Hands, SSD, H-Elas}
\\
\cline{1-6}\cline{8-12}
Hp   &   778 &  3520 &    16.90 &     9.10 & N.A.        &&    655 &  3372 &    19.93 &    39.00 &     0.48$\pm$0.25 \\
Hy   &   671 &   829 &    16.93 &     4.75 & N.A.        &&    716 &   957 &    19.54 &    37.17 &     0.45$\pm$0.25 \\
Bp   &   117 &   695 &     6.02 &     3.30 & N.A.        &&    102 &   528 &    19.84 &    64.21 &     0.48$\pm$0.25 \\
Bz   &   148 &   656 &     6.02 &     3.05 & N.A.        &&     98 &   226 &    19.54 &    \textbf{26.62} &     0.46$\pm$0.25 \\
Bu   &   150 &   577 &     6.05 &     \textbf{2.89} & N.A.        &&    162 &   303 &    19.52 &    39.86 &     0.46$\pm$0.24 \\
GM   &   213 &   384 &    16.85 &     3.45 & N.A.        &&    381 &   657 &    19.52 &    95.55 &     \textbf{0.45$\pm$0.24} \\
Adap &   128 &   724 &     6.02 &     3.44 & N.A.        &&    146 &   641 &    19.65 &    78.54 &     0.46$\pm$0.25 \\
\cline{1-6}\cline{8-12}
\end{tabular}
\end{table}

\fi		
				
		\paragraph{Evaluation}
		We use run-time and solution accuracy as the main criteria to evaluate the performance of the optimization methods.
		Let us, however, stress that for image registration problems the quantification of the quality of results is still an open question.  
		In particular, the value of the objective $\CJ$ is typically \emph{not} a good measure of quality. 
		Since ground truth transformation fields are not available in practice, a common approach is based on the \emph{target registration error} 
		(TRE) \cite{Fitzpatrick2001}. The idea is to identify landmarks in the datasets
		$R$, $T$ and $T\circ\phi$, and to compute the mean error and standard deviation
		of the landmark locations in the Euclidean distance. We employ user supplied landmarks for the 14 out of 22 problems where they are available. For the 8 problems marked ``N.A.'' in \cref{tab:IRproblems}, there are no landmarks available.
				
		We use the well-established \emph{performance profiles} of Dolan and Moré \cite{Dolan2002} to visualize the performance of optimization methods. Suppose we want to compare the performance of $n_s$ optimization methods on $n_p$ problems. Denote by $S$ the set of methods and by $P$ the set of problems. The performance profile for method $s\in S$ is the function $\rho_s:[1,\infty)\rightarrow [0,1]$ given by 
		\begin{equation*}
			\rho_{s}(\tau) = \frac{\left\lvert\bigl\{p\in P: \, r_{p,s} \leq \tau\bigr\}\right\rvert}{n_p}, \qquad \text{where } r_{p,s} = \frac{t_{p,s}}{\min\bigl\{t_{p,\sigma}: \, \sigma \in S\bigr\}}.
		\end{equation*}
		Here, $t_{p,\sigma}$ denotes a performance metric (e.g., run-time) of method $\sigma$ on problem $p$. We see that $\rho_s$ is the cumulative distribution function wrt. the performance metric $t$. 
		Note that $\tau$ in $\rho_{s}(\tau)$ is not related to the scaling factor $\tau$ used for seed matrices, but both are standard notation. 
		
		\paragraph{Parameter values and termination criteria}
		The image processing operations are carried out matrix free with the open-source image registration toolbox \FAIRTEXT~\cite{FAIR09}. 
		The initial guess $\phi_0$ in all experiments corresponds to a discretization of the identity map $\phi(x)=x$. 
		The regularization parameter $\alpha$ is chosen in such a way that it yields the lowest TRE without introducing unphysical foldings in
		the transformation field. For the stopping criteria of the optimization methods we follow \cite[p.~78]{FAIR09}. 
		That is, we stop if all of the conditions
		\begin{itemize}
			\item $\lvert\CJ(x_k) - \CJ(x_{k-1})\rvert \le 10^{-5}(1 + \lvert\CJ(x_0)\rvert)$, 
			\item $\|x_k - x_{k-1}\|\le 10^{-3}(1 + \|x_k\|)$,
			\item $\|\nabla\CJ(x_k)\| \leq 10^{-3}(1 + \lvert\CJ(x_0)\rvert)$
		\end{itemize}
		are satisfied. 
		We use $\ell = 5$ in \ref{alg_hybrid} unless stated otherwise. 
		The remaining parameter values of Algorithm~\ref{alg_hybrid} and Algorithm~\ref{alg_adaptivechoiceoftau} are specified in \cref{tab_paramvalues}.
				
		\begin{table}
			\footnotesize
			\caption{Parameter values for Algorithms~\ref{alg_hybrid} and \ref{alg_adaptivechoiceoftau}}
			\label{tab_paramvalues}
			\centering
		\begin{tabular}{c|c|c|c|c|c|c|c|c|c|c|c|c}
			\toprule
			\multicolumn{5}{c|}{Algorithm~\ref{alg_hybrid}} & \multicolumn{8}{c}{Algorithm~\ref{alg_adaptivechoiceoftau}} \\
			\midrule
			$c_s$ & $c_0$ & $C_0$ & $c_1$ & $c_2$ & $\Delta_0$ & $\Delta_1$ & $\eps_0$ & $\eps_1$ & $\eta_0$ & $\eta_1$ & $\eta_2$ & $\beta$\\
			\midrule
			$10^{-9}$ & $10^{-6}$ & $10^6$ & $10^{-6}$ & $1$ & $0.75$ & $0.1$ & $10^{-3}$ & $10^{-4}$ & $0.025$ & $0.1$ & $0.05$ & $0.01$\\
			\bottomrule
		\end{tabular}
		\end{table}
		
		To satisfy the Wolfe--Powell conditions we use the Moré--Thuente \cite{MT94} line search routine from the Poblano toolbox \cite{Poblano,Pob2} with the parameter values from \cref{tab_paramvalues3}. 
		These are the same values that \cite{BDLP22} use for their structured \LBFGS~methods to which we compare our method below. 
		We use the Armijo routine from FAIR \cite{FAIR09} with the values specified in \cref{tab_paramvalues3}. 
										
		\begin{table}
			\footnotesize
			\caption{Parameter values for the Moré--Thuente, respectively, the Armijo line search}
			\label{tab_paramvalues3}
			\centering
		\begin{tabular}{c|c|c|c|c|c|c|c}
			\toprule
			\multicolumn{6}{c|}{Moré--Thuente (Poblano)} & \multicolumn{2}{c}{Armijo (FAIR)} \\
			\midrule
			$ftol$ & $gtol$ & $maxfev$ & $stpmax$ & $stpmin$ & $xtol$ & $LSmaxIter$ & $LSreduction$\\ 
			\midrule
			$10^{-4}$ & $0.9$ & $3000$ & $2$ & $0$ & $10^{-6}$ & $50$ & $10^{-4}$\\
			\bottomrule
		\end{tabular}
		\end{table}									
		
		\paragraph{Experimental comparison of different line searches}
		
		The step sizes for \LBFGS~are most often computed to satisfy the Wolfe--Powell conditions,
		but for the image registration problems we prefer to satisfy only the Armijo condition. 
		The reason for this is that the additional gradient evaluations that the Wolfe--Powell conditions require are quite expensive in image registration. 
		To substantiate this statement we compare Wolfe to Armijo on the 22~image registration problems, where each problem is solved with the seven aforementioned choices for the seed matrix. 
		The performance profiles based on these $22 \cdot 7=154$ optimization problems are provided in \cref{fig:perf_WA}.
		
		\pgfplotsset{myaxisstyle/.style={
		axis y line=left,
		scale = 0.55,
		title style={at={(0.8,1.7)}},
		axis lines = left,
		xlabel = $\tau$,
		ymin=0, ymax=1.1,
		ylabel style={at={(-0.3,1)}},
		xlabel style={at={(1,-0.3)}},
		minor tick num=2,
		grid=both,
		grid style={line width=.1pt, draw=gray!05},
		cycle list name=color list,
		line width=0.8pt,
		x tick label style={
			/pgf/number format/.cd,
			fixed,
			precision=3,
			/tikz/.cd
		},
}}

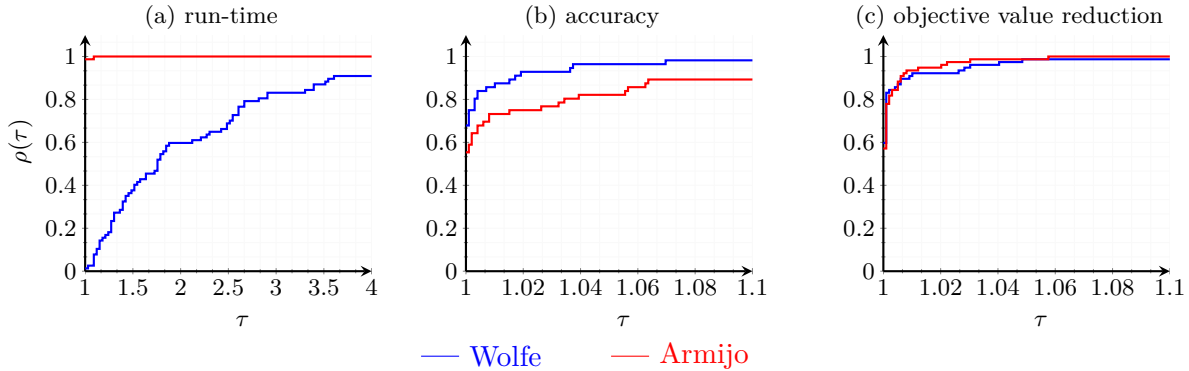
\begin{figure*}[ht!]
    \centering
    \begin{minipage}[b]{0.32\linewidth}
    \pgfplotstableread[col sep=comma]{data/WA_time.csv}\loadedtable
    \begin{tikzpicture}[font=\footnotesize,inner sep=2pt, outer sep=0pt]
      \begin{axis}[
        myaxisstyle,  
        title = {(a) run-time},
        ylabel = $\rho(\tau)$,
        xmin=1, xmax=4,
      ]
      \addplot[blue] table[x=x, y=c1]{\loadedtable};
      \addplot[red] table[x=x, y=c2]{\loadedtable};
      \end{axis}
    \end{tikzpicture}
    \end{minipage}
    \hfill
    \begin{minipage}[b]{0.32\linewidth}
   		\pgfplotstableread[col sep=comma]{data/WA_mTRE.csv}\loadedtable
    	\begin{tikzpicture}[font=\footnotesize,inner sep=2pt, outer sep=0pt]
    		\begin{axis}[
    			myaxisstyle,  
    			title = {(b) accuracy},
    			xmin=1, xmax=1.1,
    			]
    			\addplot[blue] table[x=x, y=c1]{\loadedtable};
    			\addplot[red] table[x=x, y=c2]{\loadedtable};
    		\end{axis}
    	\end{tikzpicture}
    \end{minipage}
    \hfill
    \begin{minipage}[b]{0.32\linewidth}
	\pgfplotstableread[col sep=comma]{data/WA_red.csv}\loadedtable
	\begin{tikzpicture}[font=\footnotesize,inner sep=2pt, outer sep=0pt]
	\begin{axis}[
		myaxisstyle,  
		title = {(c) objective value reduction},
		xmin=1, xmax=1.1,
		]
		\addplot[blue] table[x=x, y=c1]{\loadedtable};
		\addplot[red] table[x=x, y=c2]{\loadedtable};
	\end{axis}
	\end{tikzpicture}
    \end{minipage}
     \begin{minipage}[b]{0.32\linewidth}
    	\begin{tikzpicture}
    	\draw[blue,solid] (-1,0) -- (-0.5,0) node[right] {Wolfe} ;
    	\draw[red,solid] (1.5,0) -- (2,0) node[right] {Armijo} ;
    	\end{tikzpicture}
    \end{minipage}
    \caption{Performance profiles comparing Wolfe and Armijo line-search algorithms on IR problems;
    Armijo yields a significantly lower run-time at the expense of a somewhat lower accuracy.
    }
    \label{fig:perf_WA}
\end{figure*}

		\Cref{fig:perf_WA} reveals that Armijo is faster than Wolfe on almost all problems and often by a wide margin; 
		e.g., for around $40\%$ of the problems Wolfe requires twice as much or more run-time. 
		On the other hand, Wolfe is superior in terms of accuracy (=TRE). For instance, Armijo solves $89\%$ of the problems with an accuracy that deviates less than $10\%$ from the best obtained accuracy, whereas this value is $98\%$ of the problems for Wolfe. 
		Although in image registration we are mainly interested in run-time and accuracy, 
		we note that both line search strategies are similarly effective in decreasing the cost function. 
		In conclusion, Wolfe has an edge over Armijo in terms of accuracy, but Armijo is much faster. In the remaining experiments we use Armijo.
		We mention that the authors of \cite{ACG18} report a similar observation and also work with backtracking and Armijo.
				
		\paragraph{Accuracy of the linear solver}
		The linear system involved in the computation of $d_k$, cf. \cref{sec_tlrdisc}, 
		is solved inexactly and matrix free by a Jacobi preconditioned minimal residual method (PMINRES)~\cite{Paige1975}. 
		\Cref{fig:perf_minres} displays how the level of inexactness affects the performance of Algorithm~\ref{alg_hybrid}. 
		As it turns out, the smallest tolerance \emph{(maxiter,tol)} $=(500,10^{-6})$ yields the worst accuracy for most of the problems.
		Also, the largest tolerance $10^{-1}$ produces less accurate results than $10^{-2}$, albeit in less 
		run-time. 
		Henceforth, we use \emph{(maxiter,tol)} $=(50,10^{-2})$ which achieves the top accuracy.
		We stress that in our experiments, PMINRES outperformed the preconditioned conjugate gradients method. 
		Recent research suggests that this may be attributable to the early stopping, cf. \cite{FoSu12,YR21,RLXM22}.
		
		\pgfplotsset{myaxisstyle/.style={
		axis y line=left,
		scale = 0.55,
		title style={at={(0.8,1.7)}},
		axis lines = left,
		xlabel = $\tau$,
		ymin=0, ymax=1,
		ylabel style={at={(-0.3,1)}},
		xlabel style={at={(1,-0.3)}},
		minor tick num=2,
		grid=both,
		grid style={line width=.1pt, draw=gray!05},
		cycle list name=color list,
		line width=0.8pt,
		x tick label style={
			/pgf/number format/.cd,
			fixed,
			precision=3,
			/tikz/.cd
		},
}}

\begin{figure*}[ht!]
    \centering
    \begin{minipage}[b]{0.32\linewidth}
    \pgfplotstableread[col sep=comma]{data/mr_time.csv}\loadedtable
    \begin{tikzpicture}[font=\footnotesize,inner sep=2pt, outer sep=0pt]
      \begin{axis}[
        myaxisstyle,  
        title = {(a) run-time},
        ylabel = $\rho(\tau)$,
        xmin=1, xmax=2,
      ]
      \addplot[blue] table[x=x, y=c1]{\loadedtable};
      \addplot[red] table[x=x, y=c2]{\loadedtable};
      \addplot[orange] table[x=x, y=c3]{\loadedtable};
      \addplot[violet] table[x=x, y=c4]{\loadedtable};
      \addplot[teal] table[x=x, y=c5]{\loadedtable};
      \end{axis}
    \end{tikzpicture}
    \end{minipage}
    \begin{minipage}[b]{0.32\linewidth}
    	\pgfplotstableread[col sep=comma]{data/mr_mTRE.csv}\loadedtable
    	\begin{tikzpicture}[font=\footnotesize,inner sep=2pt, outer sep=0pt]
    	\begin{axis}[
    	myaxisstyle,  
    	title = {(b) accuracy},
    	xmin=1, xmax=1.05,
    	ymin = 0.2,
    	]
      \addplot[blue] table[x=x, y=c1]{\loadedtable};
		\addplot[red] table[x=x, y=c2]{\loadedtable};
		\addplot[orange] table[x=x, y=c3]{\loadedtable};
		\addplot[violet] table[x=x, y=c4]{\loadedtable};
		\addplot[teal] table[x=x, y=c5]{\loadedtable};
    	\end{axis}
    	\end{tikzpicture}
    \end{minipage}
	\begin{minipage}[b]{0.32\linewidth}
		\pgfplotstableread[col sep=comma]{data/mr_redt.csv}\loadedtable
		\begin{tikzpicture}[font=\footnotesize,inner sep=2pt, outer sep=0pt]
		\begin{axis}[
		myaxisstyle,  
		title = {(c) objective value reduction},
		xmin=1, xmax=1.02,
		ymin = 0.2,
		]
		\addplot[blue] table[x=x, y=c1]{\loadedtable};
		\addplot[red] table[x=x, y=c2]{\loadedtable};
		\addplot[orange] table[x=x, y=c3]{\loadedtable};
		\addplot[violet] table[x=x, y=c4]{\loadedtable};
		\addplot[teal] table[x=x, y=c5]{\loadedtable};
		\end{axis}
		\end{tikzpicture}
	\end{minipage}
     \begin{minipage}[b]{\linewidth}
    	\centering
    	\begin{tikzpicture}[font=\footnotesize,inner sep=1.5pt, outer sep=0pt]
    	\draw[blue,solid] (0.5,0) -- (1,0) node[right,align=left] {maxiter: $50$, \\ tol: $10^{-1}$} ;
    	\draw[red,solid] (3.5,0) -- (4,0)   node[right,align=left] {maxiter: $50$, \\ tol: $10^{-2}$} ;
    	\draw[orange,solid] (6.5,0) -- (7,0)  node[right,align=left] {maxiter: $100$, \\ tol: $10^{-1}$} ;
    	\draw[violet,solid] (9.5,0) -- (10,0)  node[right,align=left] {maxiter: $100$, \\ tol: $10^{-2}$} ;
    	\draw[teal,solid] (12.5,0) -- (13,0)  node[right,align=left] {maxiter: $500$, \\ tol: $10^{-6}$} ;
    	\end{tikzpicture}
    \end{minipage}
    \caption{Performance profiles for \ref{alg_hybrid}~with liner solver PMINRES. 
    	PMINRES terminates if the iteration count reaches \emph{maxiter} or the relative residual is smaller than \emph{tol}.
    	}
    \label{fig:perf_minres}
\end{figure*}
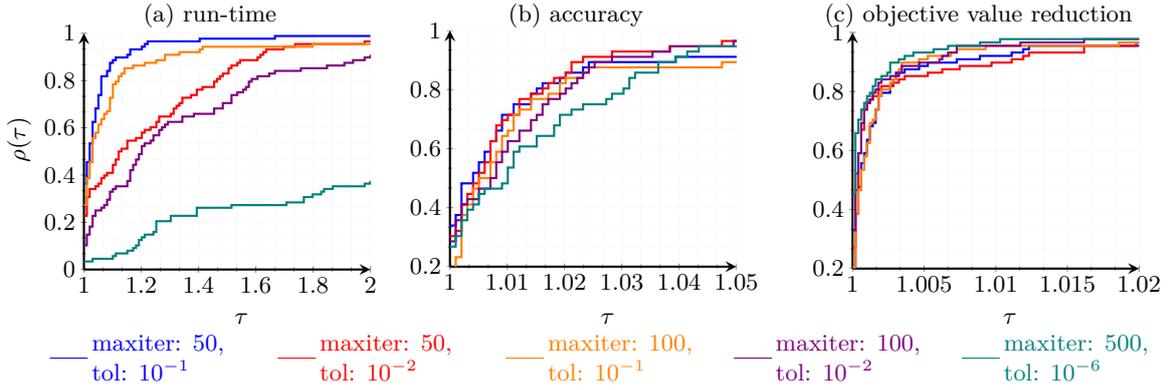	
		
		\paragraph{Experimental comparison of different seed matrices}
		Next we compare the seven choices of the seed matrix. 
		The results in \cref{fig:perf_HB} show that the structured choices outperform the unstructured ones. 
		Specifically, the two classical \LBFGS~variants require more run-time than Algorithm~\ref{alg_hybrid} with \ref{alg_adaptivechoiceoftau} while producing substantially less accurate solutions. This statement holds for all structured methods, although not depicted.
		We conclude that the structured methods are preferable for image registration. 
		
		\pgfplotsset{myaxisstyle/.style={
		axis y line=left,
		scale = 0.55,
	    title style={at={(0.8,1.7)}},
		axis lines = left,
		ylabel style={at={(-0.3,1)}},
		xlabel style={at={(1,-0.3)}},
		minor tick num=1,
		grid=both,
		grid style={line width=.1pt, draw=gray!05},
		cycle list name=color list,
		line width=0.8pt,
	    x tick label style={
			/pgf/number format/.cd,
			fixed,
			precision=3,
			/tikz/.cd
		}
}}

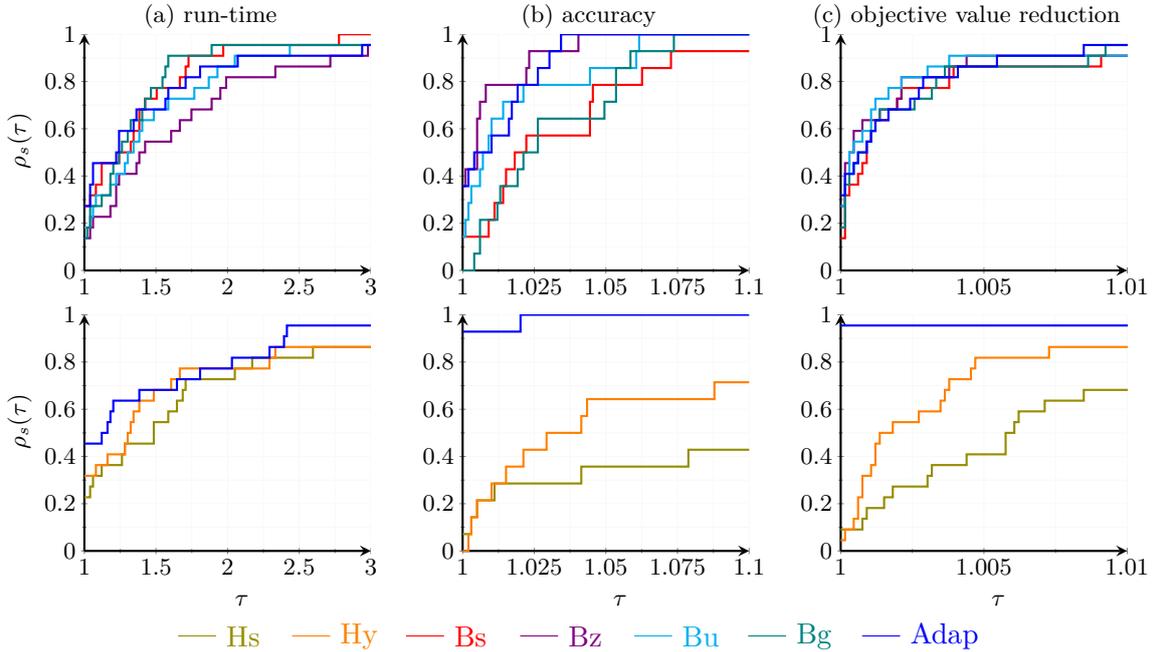
\begin{figure*}[ht!]
    \centering
    \begin{minipage}[b]{0.33\linewidth}
    \pgfplotstableread[col sep=comma]{data/B_time.csv}\loadedtable
    \begin{tikzpicture}[font=\footnotesize,inner sep=2pt, outer sep=0pt]
      \begin{axis}[
        myaxisstyle,  
        xtick={1,1.5,2,2.5,3},
        title = {(a) run-time},
        xmin=1, xmax=3, ymin=0, ymax=1,
        ylabel = $\rho_{s}(\tau)$,
      ]
      \addplot[red] table[x=x, y=c1]{\loadedtable};
      \addplot[violet] table[x=x, y=c2]{\loadedtable};
      \addplot[cyan] table[x=x, y=c3]{\loadedtable};
      \addplot[teal] table[x=x, y=c4]{\loadedtable};
      \addplot[blue] table[x=x, y=c5]{\loadedtable};
      \end{axis}
    \end{tikzpicture}
    \end{minipage}
    \begin{minipage}[b]{0.30\linewidth}
	  	\pgfplotstableread[col sep=comma]{data/B_mTRE.csv}\loadedtable
	\begin{tikzpicture}[font=\footnotesize,inner sep=2pt, outer sep=0pt]
		\begin{axis}[
			myaxisstyle,
			xtick = {1,1.025,1.05,1.075,1.1},
			title = {(b) accuracy},
			xmin=1, xmax=1.1, ymin=0, ymax=1,
			]
			\addplot[red] table[x=x, y=c1]{\loadedtable};
			\addplot[violet] table[x=x, y=c2]{\loadedtable};
			\addplot[cyan] table[x=x, y=c3]{\loadedtable};
			\addplot[teal] table[x=x, y=c4]{\loadedtable};
			\addplot[blue] table[x=x, y=c5]{\loadedtable};
		\end{axis}
	\end{tikzpicture}
    \end{minipage}
    \begin{minipage}[b]{0.30\linewidth}
    	\pgfplotstableread[col sep=comma]{data/B_red.csv}\loadedtable
    	\begin{tikzpicture}[font=\footnotesize,inner sep=2pt, outer sep=0pt]
    		\begin{axis}[
    			myaxisstyle, 
    			xtick = {1,1.005,1.01},
    			title = {(c) objective value reduction},
    			xmin=1, xmax=1.01, ymin=0, ymax=1,
    			]
    			\addplot[red] table[x=x, y=c1]{\loadedtable};
    			\addplot[violet] table[x=x, y=c2]{\loadedtable};
    			\addplot[cyan] table[x=x, y=c3]{\loadedtable};
    			\addplot[teal] table[x=x, y=c4]{\loadedtable};
    			\addplot[blue] table[x=x, y=c5]{\loadedtable};
    		\end{axis}
    	\end{tikzpicture}
    \end{minipage}
    \begin{minipage}[b]{0.33\linewidth}
   	\pgfplotstableread[col sep=comma]{data/HB_time.csv}\loadedtable
   	\begin{tikzpicture}[font=\footnotesize,inner sep=2pt, outer sep=0pt]
   	\begin{axis}[
	   	myaxisstyle,
	   	xtick={1,1.5,2,2.5,3},
	   	title = {},
	   	xmin=1, xmax=3, ymin=0, ymax=1,
	   	xlabel = $\tau$, ylabel = $\rho_{s}(\tau)$,
	   	]
			\addplot[olive] table[x=x, y=c1]{\loadedtable};
			\addplot[orange] table[x=x, y=c2]{\loadedtable};
			\addplot[blue] table[x=x, y=c3]{\loadedtable};
	   	\end{axis}
	   	\end{tikzpicture}
	\end{minipage}
    \begin{minipage}[b]{0.30\linewidth}
	   	\pgfplotstableread[col sep=comma]{data/HB_mTRE.csv}\loadedtable
	\begin{tikzpicture}[font=\footnotesize,inner sep=2pt, outer sep=0pt]
		\begin{axis}[
			myaxisstyle,
			xmin=1, xmax=1.1, ymin=0, ymax=1,
			xlabel = $\tau$, 
			xtick = {1,1.025,1.05,1.075,1.1},
			]
			\addplot[olive] table[x=x, y=c1]{\loadedtable};
			\addplot[orange] table[x=x, y=c2]{\loadedtable};
			\addplot[blue] table[x=x, y=c3]{\loadedtable};
		\end{axis}
	\end{tikzpicture}
   \end{minipage}
    \begin{minipage}[b]{0.30\linewidth}
	   	\pgfplotstableread[col sep=comma]{data/HB_red.csv}\loadedtable
	\begin{tikzpicture}[font=\footnotesize,inner sep=2pt, outer sep=0pt]
	\begin{axis}[
		myaxisstyle,
		xmin=1, xmax=1.01, ymin=0, ymax=1,
		xtick = {1,1.005,1.01},
		xlabel = $\tau$, 
		]
		\addplot[olive] table[x=x, y=c1]{\loadedtable};
		\addplot[orange] table[x=x, y=c2]{\loadedtable};
		\addplot[blue] table[x=x, y=c3]{\loadedtable};
	\end{axis}
	\end{tikzpicture}
   \end{minipage}
    \begin{minipage}[b]{\linewidth}
    	\centering
    	\begin{tikzpicture}
    	\draw[olive,solid] (0.5,0) -- (1.0,0) node[right] {$\Hp$} ;
    	\draw[orange] (2.0,0) -- (2.5,0) node[right] {$\Hy$};
    	\draw[red] (3.5,0) -- (4.0,0) node[right] {$\Bp$};
    	\draw[violet] (5,0) -- (5.5,0) node[right] {$\Bz$};
    	\draw[cyan] (6.5,0) -- (7,0) node[right] {$\Bu$} ;
    	\draw[teal] (8,0) -- (8.5,0) node[right] {$\GM$} ;
    	\draw[blue] (9.5,0) -- (10,0) node[right] {$\Adap$};
    	\end{tikzpicture}
    \end{minipage}
    
    \caption{Performance profiles for different choices of the seed matrix. 
    	The first row compares $\Bp$, $\Bz$, $\Bu$, $\GM$ and $\Adap$. 
    	The second row shows that $\Adap$ outperforms the two unstructured 
    	state-of-the-art methods $\Hp$ and $\Hy$. 
    }
    \label{fig:perf_HB}
\end{figure*}
		
		When comparing the five structured strategies to each other, we find that they have different strengths and weaknesses. 
		For instance, while $\Bz$, $\Bu$ and \ref{alg_adaptivechoiceoftau} are superior in terms of accuracy, 
		their worst-case behavior wrt. run-time is inferior. 
		$\Bp$ is the only method that solves all problems within 3 times the minimal run-time, but at the cost of a lower accuracy, whereas \ref{alg_adaptivechoiceoftau} solves all problems within 5 times the minimal run-time (not depicted) but at high accuracy.
		
		\paragraph{Comparison with existing structured \LBFGS~methods}\label{sec_compwithPetramethod}
		We compare a matrix free form of the \PetraM~and~\PetraP~methods from \cite{BDLP22} to Algorithm~\ref{alg_hybrid}. 
		To this end, we derive four choices of $\tau_k$ for \PetraM~and~\PetraP~based on the secant equations that are used in the derivation of these methods; cf. \cref{sec_difftoPetra}. In particular, we recover two of the choices made in \cite{BDLP22}.
		The authors of \cite{BDLP22} propose to use a scaled identity as seed matrix in \PetraM~to avoid solving linear systems in the compact representation. For a fair comparison with Algorithm~\ref{alg_hybrid} we use \PetraM~with seed matrix $B_k^{(0)}=\tau_k I+\nabla^2\CS(x_k)$.  
		For \PetraP~we do not check if $B_k$ is positive definite (which is demanded in that method), but only if the search direction is descent, which can be done matrix free. 
		
		Since all methods are identical for quadratic regularizers, we can only use five problems out of 22 to compare the methods. 
		Combining the results from the four choices of $\tau_k$ we obtain \cref{fig:perf_MP}. 
		It shows that our approach requires less run-time while achieving better accuracy.
		
		\pgfplotsset{myaxisstyle/.style={
		axis y line=left,
		scale = 0.55,
		title style={at={(0.8,1.7)}},
		axis lines = left,
		xlabel = $\tau$,
		ymin=0, ymax=1,
		ylabel style={at={(-0.3,1)}},
		xlabel style={at={(1,-0.3)}},
		minor tick num=2,
		grid=both,
		grid style={line width=.1pt, draw=gray!05},
		cycle list name=color list,
		line width=0.8pt,
		x tick label style={
			/pgf/number format/.cd,
			fixed,
			precision=3,
			/tikz/.cd
		},
}}

\begin{figure*}[ht!]
    \centering
    \begin{minipage}[b]{0.32\linewidth}
    \pgfplotstableread[col sep=comma]{data/MP_time.csv}\loadedtable
    \begin{tikzpicture}[font=\footnotesize,inner sep=2pt, outer sep=0pt]
      \begin{axis}[
        myaxisstyle,  
        title = {(a) run-time},
        ylabel = $\rho_s(\tau)$,
        xmin=1, xmax=6, 
      ]
      \addplot[blue] table[x=x, y=c1]{\loadedtable};
      \addplot[red] table[x=x, y=c2]{\loadedtable};
      \addplot[orange] table[x=x, y=c3]{\loadedtable};
      \end{axis}
    \end{tikzpicture}
    \end{minipage}
    \begin{minipage}[b]{0.32\linewidth}
    	\pgfplotstableread[col sep=comma]{data/MP_mTRE.csv}\loadedtable
    	\begin{tikzpicture}[font=\footnotesize,inner sep=2pt, outer sep=0pt]
    	\begin{axis}[
    	myaxisstyle,  
    	title = {(b) accuracy},
    	xmin=1, xmax=1.05,
    	]
      \addplot[blue] table[x=x, y=c1]{\loadedtable};
		\addplot[red] table[x=x, y=c2]{\loadedtable};
		\addplot[orange] table[x=x, y=c3]{\loadedtable};
    	\end{axis}
    	\end{tikzpicture}
    \end{minipage}
	\begin{minipage}[b]{0.32\linewidth}
		\pgfplotstableread[col sep=comma]{data/MP_iter.csv}\loadedtable
		\begin{tikzpicture}[font=\footnotesize,inner sep=2pt, outer sep=0pt]
		\begin{axis}[
		myaxisstyle,  
		title = {(c) iterations},
		xmin=1, xmax=2,
		]
		\addplot[blue] table[x=x, y=c1]{\loadedtable};
		\addplot[red] table[x=x, y=c2]{\loadedtable};
		\addplot[orange] table[x=x, y=c3]{\loadedtable};
		\end{axis}
		\end{tikzpicture}
	\end{minipage}
     \begin{minipage}[b]{\linewidth}
     	\centering
    	\begin{tikzpicture}[font=\footnotesize,inner sep=2pt, outer sep=0pt]
    	\draw[blue,solid] (1,0) -- (1.5,0) node[right] {Proposed} ;
    	\draw[red,solid] (4,0) -- (4.5,0)   node[right] {\PetraM} ;
    	\draw[orange,solid] (7,0) -- (7.5,0)  node[right] {\PetraP} ;
    	\end{tikzpicture}
    \end{minipage}
    \caption{Performance profiles of three structured~\LBFGS~methods, based on the image registration problems with non-quadratic regularizers. 
    	The approach proposed in this paper produces the lowest run-time while yielding the highest accuracy.}
    \label{fig:perf_MP}
\end{figure*}
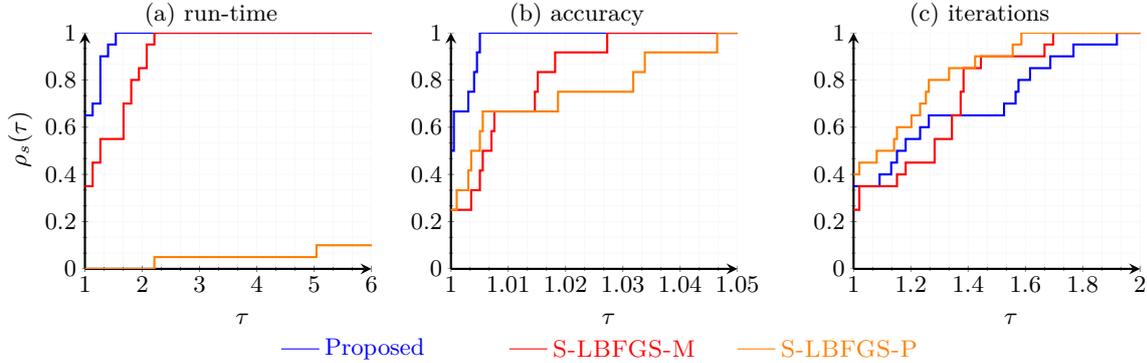
	
	\subsection{Quadratic problems}\label{sec:quad}
		
		We turn to the academic setting of strictly convex quadratics.
		We study how well the Newton direction is approximated, 
		how the regularization parameter and the number of \LBFGS~update vectors affect convergence,
		and if the methods converge linearly as predicted by theory. 
			
		\paragraph{Problems under consideration, setup of experiments}
		We seek the unique minimizer $\xopt:=(1,1,\ldots,1)^T\in\R^{16}$ of the strictly convex quadratic 
		$\CJ(x):=0.5(x-\xopt)^T \left[D+\alpha S\right](x-\xopt)$, where
		$D$ and $S$ are SPD and $\alpha>0$. 
		We let $D$ be the diagonal matrix $D_{jj}:=\exp(-j)$ with exponentially decaying eigenvalues. 
		This matrix is ill-conditioned with a condition number around $10^9$, 
		reflecting the Hessian of a typical data-fidelity term in inverse problems.
		For $S$ we use the classical five-point stencil finite difference discretization of the Laplacian with zero boundary conditions on the unit square \cite[Section~1.4.3]{Ba16}. 
		We investigate three setups: weakly ($\alpha=10^{-5}$), mildly ($\alpha=10^{-3}$) and strongly ($\alpha = 10^{-1}$) regularized problems. 
		We use $x_0=0$, $S_k=\alpha S$, and we terminate if $\|\nabla \CJ(x_k)\| \leq 10^{-13}$. 
		Linear systems are solved with \textsc{Matlab's} backslash.
		
		\paragraph{Proximity to Newton direction}
			
		We study the proximity of $d_k=-B_k^{-1}\nabla \CJ(x_k)$ to the Newton direction 
		$d_k^N:=-\nabla^2\CJ(x_k)^{-1}\nabla \CJ(x_k)$; the latter is optimal as $x_k+d_k^N 
		=\xopt$.
		Specifically, we look at the cosine 
		$d_k^Td_k^N/(\|d_k\|\|d_k^N\|)$ and the ratio $\|d_k\|/\|d_k^N\|$ that ideally are close to one. 
		The experiments do not show any distinctive pattern for these quantities 
		over the iterations, so we use box plots to illustrate them 
		in \cref{fig:boxplot}.
		Each box displays the variation in angle and scale over all iterations, where the middle line of the box 
		represents the median value, the height of the box the interquartile range, 
		and the top and bottom lines the maximum and minimum values.

\newcommand{\alphaA}{10^{-6}}
\newcommand{\alphaB}{10^{-3}}
\newcommand{\alphaC}{10^{-1}}


\newcommand{\cyclelist}{
	{olive},
	{orange},
	{red},
	{violet},
	{cyan},
	{teal},
	{blue},
}

\pgfplotsset{
	compat=1.15,
	my axis style/.style={
		xlabel = iterations ($k$),
		xscale = 0.45,
		yscale = 0.35,
		axis y line=left, axis x line =bottom,
		cycle list/.expanded={\cyclelist},
		every axis title/.style={above, at={(current axis.north)}, yshift = 0cm},
		every axis y label/.style={at={(current axis.west)},xshift=-1cm, rotate=90},
		every axis x label/.style={at={(current axis.south)},yshift=-0.7cm},
	},
}

\newcommand{\myBoxplot}[4]{
	\begin{tikzpicture}
	\begin{axis}[my axis style,#3,boxplot/draw direction=y,boxplot/box extend=0.7,
	xlabel=,xticklabels={,},xmin=0,xmax=8,xtick={1,...,7}]
	\addplot +[boxplot,mark=none,solid] table[x index=0, y index=#4] {data/#1_1_#2.dat};
	\addplot +[boxplot,mark=none,solid] table[x index=0, y index=#4] {data/#1_2_#2.dat};
	\addplot +[boxplot,mark=none,solid] table[x index=0, y index=#4] {data/#1_3_#2.dat};
	\addplot +[boxplot,mark=none,solid] table[x index=0, y index=#4] {data/#1_4_#2.dat};
	\addplot +[boxplot,mark=none,solid] table[x index=0, y index=#4] {data/#1_5_#2.dat};
	\addplot +[boxplot,mark=none,solid] table[x index=0, y index=#4] {data/#1_6_#2.dat};
	\addplot +[boxplot,mark=none,solid] table[x index=0, y index=#4] {data/#1_7_#2.dat};
	\draw [thin, dashed, draw=black] (axis cs: 0,1) -- (axis cs: 7,1);
	\end{axis}
	\end{tikzpicture}
}

\newcommand{\myboxplotLegend}{
	\centering
	\begin{tikzpicture}
	\draw[gray,solid] (1.5,0) -- (2,0) node[right] {$\Hp$} ;
	\draw[blue] (5,0) -- (5.5,0) node[right] {$\Hy$};
	\draw[magenta] (8.5,0) -- (9,0) node[right] {$\Bp$};
	\draw[cyan] (-1,-1) -- (-0.5,-1) node[right] {$\Bz$};
	\draw[red] (1.5,-1) -- (2,-1) node[right] {$\Bu$} ;
	\draw[green] (5,-1) -- (5.5,-1) node[right] {$\GM$} ;
	\draw[orange] (8.5,-1) -- (9,-1) node[right] {$\Adap$};
	\end{tikzpicture}
}

\begin{figure}[ht]
	
	\centering
	(a) cosine with Newton direction
	\pgfplotsset{ymax=1.2,ymin=-1,yticklabels={-1,0,1},ytick={-1,0,1}}
	\def\expname{quadratic_n16__l5}
	\begin{tabular*}{\textwidth}{c @{\extracolsep{\fill}}  c @{\extracolsep{\fill}} c @{\extracolsep{\fill}} c}
		\myBoxplot{\expname}{1}{title = {(i) $\alpha = 10^{-5}$},ylabel={$\ell = 5$}}{4} & 
		\myBoxplot{\expname}{2}{title = {(ii) $\alpha = 10^{-3}$},yticklabels={,}}{4}  & 
		\myBoxplot{\expname}{3}{title = {(iii) $\alpha = 10^{-1}$},yticklabels={,}}{4}
	\end{tabular*}
	
%
	
	(b) scaling ratio with Newton direction\smallskip
	
	\pgfplotsset{ymode=log,ymin=1e-5,ymax=1e1,yticklabels={$10^{-4}$,1},ytick={1e-4,1}}
	\def\expname{quadratic_n16__l5}
	\begin{tabular*}{\textwidth}{c @{\extracolsep{\fill}} c @{\extracolsep{\fill}} c @{\extracolsep{\fill}} c}
		\myBoxplot{\expname}{1}{ylabel={$\ell = 5$},ymode=log}{5} & 
		\myBoxplot{\expname}{2}{ymode=log,yticklabels={,}}{5}  & 
		\myBoxplot{\expname}{3}{ymode=log,yticklabels={,}}{5}
	\end{tabular*}
	
%

	\begin{tikzpicture}
	\draw[olive,solid] (0.5,0) -- (1.0,0) node[right] {$\Hp$} ;
	\draw[orange] (2.0,0) -- (2.5,0) node[right] {$\Hy$};
	\draw[red] (3.5,0) -- (4.0,0) node[right] {$\Bp$};
	\draw[violet] (5,0) -- (5.5,0) node[right] {$\Bz$};
	\draw[cyan] (6.5,0) -- (7,0) node[right] {$\Bu$} ;
	\draw[teal] (8,0) -- (8.5,0) node[right] {$\GM$} ;
	\draw[blue] (9.5,0) -- (10,0) node[right] {$\Adap$};
	\end{tikzpicture}
	
	\caption{Box plots 
		comparing the search directions of 
		unstructured and structured \LBFGS~to the Newton direction, for different values of the regularization parameter $\alpha$.
	}
	\label{fig:boxplot}
\end{figure}
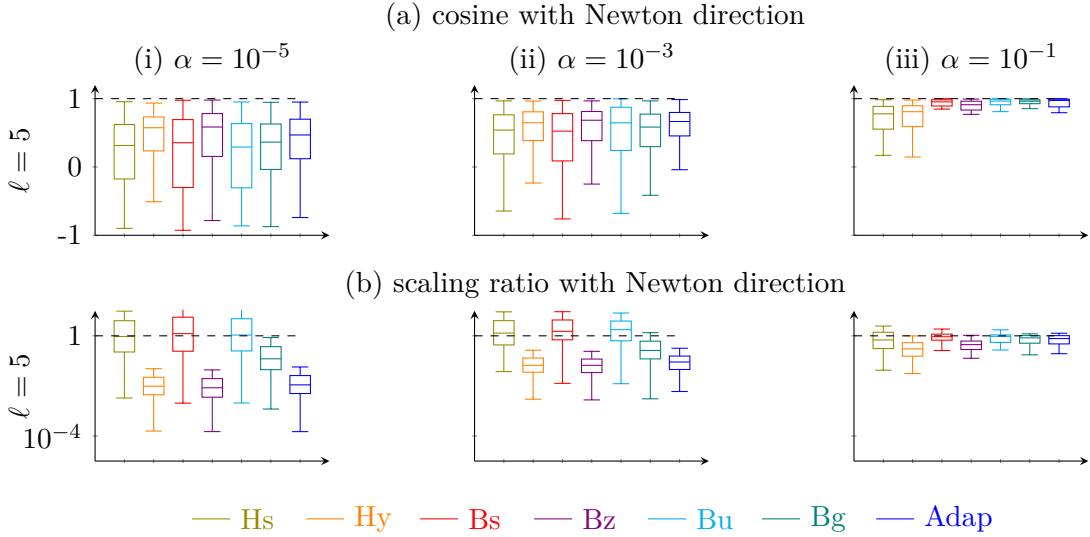

		Increasing $\ell$ yields increasingly better approximations of $d_k^N$ (not depicted). Similar as in the literature for unstructured methods, however, the gains are minor when increasing $\ell$ beyond 5. 
		For the strong regularization of $\alpha=10^{-1}$, the search directions of the structured schemes 
		are closer to the Newton direction than for the unstructured methods. 
		For weak and mild regularization the approximation quality is similar for all schemes.
		All schemes benefit from increasing $\alpha$. 
		
		\paragraph{Effect of regularization strength and memory length}
								
		Iteration numbers, average number of line search steps and run-time are summarized in \cref{tab:quadPrb}. 		
		As expected, the iteration numbers decrease for increasing $\alpha$ and also for for increasing $\ell$. 
		In most cases, the structured schemes require fewer iterations and less run-time than the unstructured ones. 
		Focusing on $\ell=3,5$ with run-time as performance measure, we find that
		$\GM$ is best for $\alpha=10^{-5}$, while $\Bu$ is best for $\alpha=10^{-3},10^{-1}$.
		Moreover, $\GM$ is consistently faster than 
		the unstructured methods if we exclude $\ell=\infty$. For the standard choice $\ell=5$, $\Bp$ is consistently faster than 
		the unstructured methods. 
		
		\begin{table}[h!]
\footnotesize
\centering
\caption{Results on quadratic problems for different values of $\alpha$ and $\ell$. 
By and large, the structured methods outperform the unstructured ones. In particular,
$\GM$ with $\ell<\infty$ and $\Bp$ with $\ell=5$ have lower run-time than the unstructured methods (Hs and Hy) in all cases.
%
}\label{tab:quadPrb}

\setlength{\tabcolsep}{4.5pt}
\newcolumntype{R}{>{\raggedleft\arraybackslash}p{7mm}}
\begin{tabular}{l*{4}{R}c@{\hskip2mm}*{4}{R}c@{\hskip2mm}*{4}{R}}
\hline
\multicolumn{15}{c}{number of iterations}	
\\
\hline
 &\multicolumn{4}{c}{$\alpha=10^{-5}$}
&&\multicolumn{4}{c}{$\alpha=10^{-3}$}
&&\multicolumn{4}{c}{$\alpha=10^{-1}$}
\\
	S\quad/$\ell$ 	&$3$     	&$5$     	&$10$    	&$\infty$  	
	&        	&$3$     	&$5$     	&$10$    	&$\infty$  	
	&        	&$3$     	&$5$     	&$10$    	&$\infty$  
\\
\cline{1-5}
\cline{7-10}
\cline{12-15}
 Hs    &  3380 &  1930 &   846 &    43  &&  478 &   279 &    136 &    34  &&   100 &   87 &    67 &    30  \\
 Hy    &  2950 &  2369 &   1359 &    95  &&  639 &   421 &   211 &    50  &&   107 &  91 &    55 &    33  \\
 Bs    &  2896 &   1762 &   594 &    40  &&  420 &    214 &    85 &    26  &&   33 &    28 &    18 &    15  \\
 Bz    &  2898 &  2560 &  1463 &    93  &&  592 &   439 &   252 &    76  &&   84 &   55 &    46 &    26  \\
 Bu    &  2689 &   2241 &   747 &    42  &&  391 &    172 &    74 &    26  &&   33 &    24 &    18 &    15  \\
 Bg    &  2419 &   1560 &   669 &    63  &&  440 &  248 &    105&    41  &&   41 &   33 &    23 &    18  \\
 Adap  &  2406 &   2211 &  1075 &   66  &&  465 &  350 &    209 &  32  &&   38 &  27 &     20 &   15  \\
\hline
\multicolumn{15}{c}{average number of line searches per iteration}	
\\
\hline
 Hs    &  4.71 & 6.55 & 8.46 & 1.14 && 3.72 & 3.96 & 3.32 & 1.15 && 1.69 & 1.69 & 1.58 & 1.33 \\
 Hy    &  1.10 & 1.10 & 1.13 & 1.00 && 1.11 & 1.13 & 1.18 & 1.00 && 1.07 & 1.04 & 1.07 & 1.03 \\
 Bs    &  4.65 & 6.71 & 8.02 & 1.48 && 3.45 & 3.71 & 3.01 & 1.46 && 1.21 & 1.18 & 1.22 & 1.13 \\
 Bz    &  1.09 & 1.09 & 1.10 & 1.16 && 1.07 & 1.05 & 1.07 & 1.12 && 1.02 & 1.04 & 1.04 & 1.08 \\
 Bu    &  4.73 & 6.86 & 8.21 & 1.40 && 3.39 & 3.60 & 2.89 & 1.46 && 1.27 & 1.21 & 1.22 & 1.13 \\
 Bg    &   2.58 & 3.19 & 3.78 & 1.25 && 1.86 & 1.91 & 1.97 & 1.27 && 1.12 & 1.06 & 1.09 & 1.11 \\
 Adap  &  1.16 & 1.16 & 1.17 & 1.24 && 1.16 & 1.18 & 1.22 & 1.34 && 1.13 & 1.15 & 1.10 & 1.13 \\
\hline 
\multicolumn{15}{c}{run-time (in milliseconds)}	
\\
\hline
Hs    &  879 & 505 & 232 & 15 && 123 & 78 & 40 & 10 && 27 & 24 & 20 & 10 \\
Hy    &  704 & 570 & 329 & 31 && 150 & 103 & 57 & 15 && 29 & 25 & 17 & 11 \\
Bs    &  741 & 466 & 163 & 14 && 109 & 58 & 25 & 9 && 10 & 9 & 6 & 5 \\
Bz    &  669 & 610 & 352 & 29 && 144 & 107 & 67 & 24 && 26 & 20 & 14 & 8 \\
Bu    & 709 & 571 & 201 & 14 && 101 & 47 & 22 & 9 && 10 & 8 & 6 & 5 \\
Bg   & 600 & 387  & 175 & 19 && 109 & 67 & 30 & 13 && 13 & 11 & 7 & 6 \\
Adap & 599 & 535 & 261 & 21 && 117 & 88 & 57 & 12 && 11 & 9 & 7 & 5 \\
\hline
\end{tabular}
\end{table}	
			
		\paragraph{Rate of convergence}		
				
		In \cref{fig_conv} we assess the rate of convergence of the \LBFGS~schemes for $\alpha=10^{-1}$. 
		Unsurprisingly, the structured schemes exhibit better convergence rates than the unstructured ones. 
		$\Bz$ performs worse than the other structured schemes, which are all quite similar to each other. 
		\Cref{thm_rateofconvstrongminimizer} implies q-linear convergence of $(\CJ(x_k))$ 
		and r-linear convergence of $(\norm{x_k-\xopt})$ and $(\norm{\nabla\CJ(x_k)})$, 
		which fits well with \cref{fig_conv}. 
						

\renewcommand{\cyclelist}{
	{olive},
	{orange},
	{red},
	{violet},
	{cyan},
	{teal},
	{blue}
}

\pgfplotsset{
	compat=1.15,
	my axis style/.style={
		xlabel = $k$,
		xscale = 0.5,
		yscale = 0.7,
		axis y line=left, axis x line =bottom,
		cycle list/.expanded={\cyclelist},
		every axis title/.style={above, at={(current axis.north)}, yshift = 0cm},
		every axis y label/.style={at={(current axis.west)},xshift=-1cm, rotate=90},
		every axis x label/.style={at={(current axis.south)},yshift=-0.7cm},
	},
}

\renewcommand{\myBoxplot}[4]{
	\begin{tikzpicture}
	\begin{axis}[my axis style,#3,ymode=log,
	]
	\addplot +[mark=none,solid] table[x index=0, y index=#4] {data/#1_1_#2.dat};
	\addplot +[mark=none,solid] table[x index=0, y index=#4] {data/#1_2_#2.dat};
	\addplot +[mark=none,solid] table[x index=0, y index=#4] {data/#1_3_#2.dat};
	\addplot +[mark=none,solid] table[x index=0, y index=#4] {data/#1_4_#2.dat};
	\addplot +[mark=none,solid] table[x index=0, y index=#4] {data/#1_5_#2.dat};
	\addplot +[mark=none,solid] table[x index=0, y index=#4] {data/#1_6_#2.dat};
	\addplot +[mark=none,solid] table[x index=0, y index=#4] {data/#1_7_#2.dat};
	\end{axis}
	\end{tikzpicture}
}

\renewcommand{\myboxplotLegend}{
	\centering
	\begin{tikzpicture}
	\draw[gray,solid] (1.5,0) -- (2,0) node[right] {$\Hp$} ;
	\draw[blue] (5,0) -- (5.5,0) node[right] {$\Hy$};
	\draw[magenta] (8.5,0) -- (9,0) node[right] {$\Bp$};
	\draw[cyan] (-1,-1) -- (-0.5,-1) node[right] {$\Bz$};
	\draw[red] (1.5,-1) -- (2,-1) node[right] {$\Bu$} ;
	\draw[green] (5,-1) -- (5.5,-1) node[right] {$\GM$} ;
	\draw[orange] (8.5,-1) -- (9,-1) node[right] {$\Adap$};
	\end{tikzpicture}
}

\begin{figure}[ht!]
	
	\centering
	\def\expname{quadratic_n16__l5_conv}
	\begin{tabular*}{\textwidth}{c @{\extracolsep{\fill}}  c @{\extracolsep{\fill}} c @{\extracolsep{\fill}} c}
		\myBoxplot{\expname}{3}{title = {(i) $\|x_k - x^*\|$}}{1} & 
		\myBoxplot{\expname}{3}{title = {(ii) $\CJ(x_k)-\CJ(\xopt)$}}{2}  & 
		\myBoxplot{\expname}{3}{title = {(iii) $\|\nabla \CJ(x_k)\|$}}{3}
	\end{tabular*}

	\begin{tikzpicture}
		\draw[olive,solid] (0.5,0) -- (1.0,0) node[right] {$\Hp$} ;
		\draw[orange] (2.0,0) -- (2.5,0) node[right] {$\Hy$};
		\draw[red] (3.5,0) -- (4.0,0) node[right] {$\Bp$};
		\draw[violet] (5,0) -- (5.5,0) node[right] {$\Bz$};
		\draw[cyan] (6.5,0) -- (7,0) node[right] {$\Bu$} ;
		\draw[teal] (8,0) -- (8.5,0) node[right] {$\GM$} ;
		\draw[blue] (9.5,0) -- (10,0) node[right] {$\Adap$};
	\end{tikzpicture}

	\caption{Convergence profiles on the quadratic problem with $\ell=5$ and $\alpha = 10^{-1}$.
	The proposed structured approach results in higher rates of convergence compared to classical \LBFGS.}
	\label{fig_conv}
\end{figure}
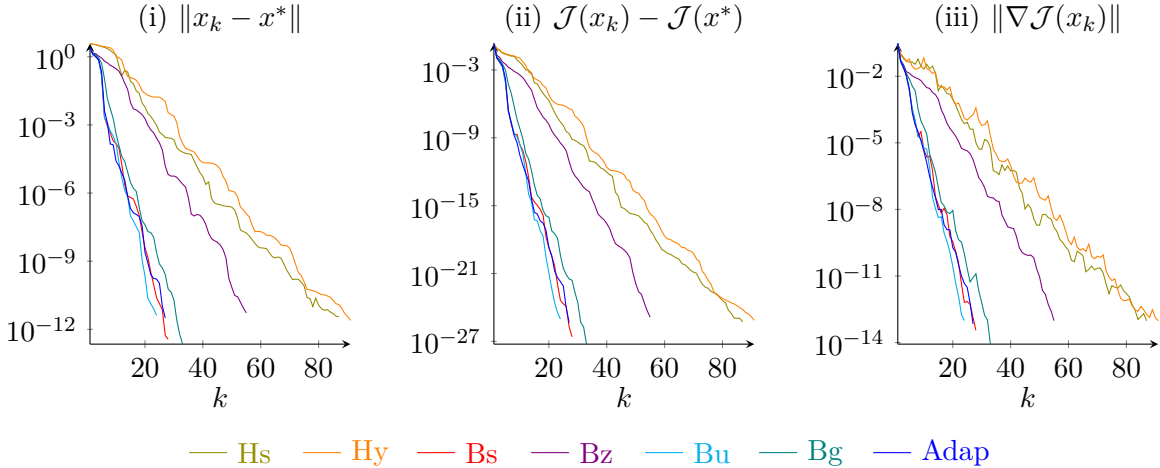
		

\clearpage

\section{Summary and outlook}\label{sec:conclusion}
	
	This paper presents a novel \LBFGS~scheme for structured large-scale optimization problems,
	with the structure under consideration appearing frequently in inverse problems. 
	
	The convergence analysis of this work contributes meaningfully to the literature on \LBFGS. 
	Its main finding is the linear convergence of the proposed method for non-convex problems, which has not been proven
	before for \LBFGS. The assumptions underlying this result are particularly easy to satisfy if the objective includes a convex regularizer
	for which a computationally cheap and uniformly positive definite Hessian approximation is available. 
	Moreover, it is the first convergence analysis for a structured \LBFGS~method, the first convergence analysis for \LBFGS~in general Hilbert spaces, and the first convergence analysis involving the \KL-inequality.
	
	The contributions on the theoretical level are accompanied by extensive numerical experiments.
	Most importantly, we demonstrate that the new method outperforms other structured \LBFGS~methods and classical \LBFGS~on large-scale real-world inverse problems from medical image registration; these problems are highly non-convex and ill-conditioned. 
	The competitiveness to classical \LBFGS~is further illustrated on strongly convex quadratics.
	The method can be implemented matrix free and an implementation is available at \href{https://github.com/hariagr/SLBFGS}{https://github.com/hariagr/SLBFGS}. 	
	
	Future work should study numerically the structured Barzilai--Borwein method that is included in this paper. 
	It could also assess the numerical performance of Algorithm~\ref{alg_hybrid} on additional classes of inverse problems, e.g., problems from PDE-constrained optimal control. From a theoretical perspective it would be interesting to better understand how to tune the scaling parameter $\tau$ of the seed matrix in an effective way (this comment applies to classical \LBFGS~as well) or how to choose the error tolerance of the iterative solver adaptively.
	
%
%

\bibliographystyle{alphaurl}
\bibliography{lit}

\end{document}